\documentclass[11pt]{article}

\usepackage[a4paper,margin=1in]{geometry}
\usepackage{amsmath,amsthm,amsfonts,amssymb}
\usepackage{bm,mathrsfs,upgreek}
\usepackage{graphicx,booktabs,tabularx,float}
\usepackage[authoryear,round]{natbib}
\usepackage{authblk}
\usepackage[hypertexnames=false,colorlinks=true,linkcolor=blue,citecolor=blue,urlcolor=blue]{hyperref}
\usepackage[nameinlink,noabbrev]{cleveref}
\allowdisplaybreaks

\newtheorem{theorem}{Theorem}[section]

\newtheorem{assumption}[theorem]{Assumption}
\newtheorem{lemma}[theorem]{Lemma}
\newtheorem{proposition}[theorem]{Proposition}

\theoremstyle{definition}

\newtheorem{remark}[theorem]{Remark}

\def\X{{\bf X}}
\def\x{{\bf x}}
\def\Y{{\bf Y}}

\def\Z{{\bf Z}}

\def\A{{\bf A}}

\def\B{{\bf B}}
\def\R{{\bf R}}
\def\K{{\bf K}}

\def\bI{{\bf I}}
\def\bS{{\bf S}}

\def\mR{\mathbb{R}}

\def\tr{\mbox{tr}}
\def\var{\mbox{var}}

\def\cov{\mbox{cov}}

\def\diag{\mbox{diag}}

\newcommand{\one}{\mathbf{1}}
\newcommand{\E}{\mathbb{E}}

\newcommand{\trans}{^\top}

\newcommand{\brho}{\mbox{\boldmath $\rho$}}
\newcommand{\tY}{\widetilde{Y}}
\newcommand{\tbY}{\widetilde{\Y}}

\newcommand{\tT}{\widetilde{T}}
\newcommand{\tM}{\widetilde{M}}
\newcommand{\tV}{\widetilde{V}}
\def\ind{\,{\buildrel d \over =}\,}
\begin{document}
\title{Phase transition of Schott's statistic for high-dimensional heavy-tailed data}
\author[1]{Hantao Chen\thanks{Email: htchen2000@sjtu.edu.cn}}
\author[2]{Guangming Pan\thanks{Email: gmpan@ntu.edu.sg}}
\author[1]{Cheng Wang\thanks{Corresponding author. Email: chengwang@sjtu.edu.cn}}
\affil[1]{School of Mathematical Sciences, MOE-LSC, Shanghai Jiao Tong University,
Shanghai 200240, China}
\affil[2]{School of Physical and Mathematical Sciences, Nanyang Technological University,
Singapore 637371, Singapore}
\date{}
\maketitle

\begin{abstract}
Consider Schott's statistic \citep{schott2005testing} defined as the squared Frobenius norm of the sample correlation matrix for data from $\alpha$-regularly varying populations. We investigate its asymptotic distribution in a general framework characterized by the data dimension $p$, sample size $n$, and regularly varying index $\alpha$. In particular, we identify a phase transition phenomenon in the asymptotic behavior. For light-tailed populations ($\alpha > 3$), we revisit the $\alpha$-free asymptotic distribution but relax the constraint on the ratio of $p/n$. For heavy-tailed populations ($\alpha < 3$), we derive a new asymptotic normal distribution whose variance explicitly depends on $\alpha$. We also propose a consistent estimator for the asymptotic variance such that the standardized Schott's test statistic remains applicable for unknown location parameters and all $\alpha > 0$.
\end{abstract}

\noindent\textbf{Keywords:} Central limit theorem, Heavy-tailed data, High-dimensional inference, Random matrix theory, Schott's statistic

\medskip
\noindent\textbf{Mathematics Subject Classification:} 62H10, 60F05

\section{Introduction}
Suppose that the observations $\Z_1,\cdots,\Z_n\in\mR^p$ are independent and identically distributed (i.i.d.) from a $p$-dimensional population $\Z=(Z_1,\cdots,Z_p)$. Denote the data matrix by 
\begin{align}
	\begin{pmatrix}
		\Z_1\trans\\
		\vdots\\
		\Z_n\trans
	\end{pmatrix}=\begin{pmatrix}
		Z_{11}&\cdots&Z_{1p}\\
		\vdots&\cdots&\vdots\\
		Z_{n1}&\cdots&Z_{np}
	\end{pmatrix}=\left(\X_1,\cdots,\X_p\right).\label{1-April}
\end{align}
Consider the hypotheses testing problem 
\begin{align*}
	H_0: \mbox{The covariance matrix}, \cov(\Z), \mbox{is diagonal}.
\end{align*}
Equivalently, the population correlation matrix is an identity matrix, i.e.
\begin{align*}
	H_0:\mbox{cor}(\Z)=\bI_p. 
\end{align*}
Define the sample covariance matrix 
\begin{align*}
	\bS=\frac{1}{n}\sum_{i=1}^n \Z_i \Z_i\trans,
\end{align*}
and the corresponding sample correlation matrix
\begin{align*}
	\R=&\diag(\bS)^{-\frac{1}{2}} \bS \diag(\bS)^{-\frac{1}{2}}\label{form:def_R},
\end{align*}
where
\begin{align*}
	\diag(\bS)=\frac{1}{n}\diag\left(\|\X_1\|^2,\cdots,\|\X_p\|^2 \right) .
\end{align*}
\cite{schott2005testing} proposed a statistic (Schott's statistic) for the above testing problem
\begin{align}
	T=\tr(\R^2).
\end{align}
Note that $\tr(\R)=p$. Schott's statistic eventually evaluates the squared Frobenius norm of the difference between the sample correlation matrix $\R$ and the identity matrix $\bI$, i.e.,
\begin{align*}
	T=\tr(\R^2)-2 \tr(\R)+2p=\|\R-\bI_p\|_F^2+p.
\end{align*}
\cite{schott2005testing} obtained the following result. 
\begin{theorem}[Theorem 1 of \citealt{schott2005testing}]\label{thm:Schott}
	Suppose $\Z \sim N(\bf{0}, \bI_p)$ and $p/n \to y \in(0,\infty)$. Then
	\begin{align}\label{clt0} 
		\frac{n}{2p}\left(\tr(\R^2)-p-\frac{p(p-1)}{n}\right)\stackrel{d}{\to}N(0,1).
	\end{align}
\end{theorem}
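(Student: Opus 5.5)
Under $H_0$ with Gaussian data, write $\tr(\R^2)=p+2\sum_{j<k} r_{jk}^2$, where $r_{jk}=\X_j\trans\X_k/(\|\X_j\|\|\X_k\|)$. Since $\X_j\sim N(\mathbf 0,\bI_n)$ are i.i.d., the directions $\mathbf u_j=\X_j/\|\X_j\|$ are i.i.d.\ uniform on the sphere $S^{n-1}$ and independent of the norms. Hence $T$ depends only on $\mathbf u_1,\dots,\mathbf u_p$. The moments of $r_{jk}^2=(\mathbf u_j\trans\mathbf u_k)^2$ follow from $r_{jk}^2\sim\mathrm{Beta}(1/2,(n-1)/2)$:
\[
\E r_{jk}^2=\tfrac1n,\qquad \E r_{jk}^4=\tfrac{3}{n(n+2)}.
\]
This gives $\E T=p+p(p-1)/n$ exactly, which is the centering in \eqref{clt0}.

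Set $h(\mathbf u_j,\mathbf u_k)=(\mathbf u_j\trans\mathbf u_k)^2-1/n$. Then $\E[h(\mathbf u_j,\mathbf u_k)\mid \mathbf u_j]=\mathbf u_j\trans(\bI_n/n)\mathbf u_j-1/n=0$, so $U=\sum_{j<k}h(\mathbf u_j,\mathbf u_k)$ is a \emph{degenerate} U-statistic and its summands are pairwise uncorrelated. Consequently
\[
\var(T)=4\binom p2\var(r_{12}^2)=2p(p-1)\Big(\tfrac{3}{n(n+2)}-\tfrac1{n^2}\Big)=\tfrac{4p(p-1)(n-1)}{n^2(n+2)},
\]
so $(n/2p)^2\var(T)\to1$. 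This matches the normalization in \eqref{clt0}.

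For asymptotic normality I would use the martingale CLT with the filtration $\mathcal F_k=\sigma(\mathbf u_1,\dots,\mathbf u_k)$ and differences
\[
D_k=\tfrac{n}{p}\sum_{j<k}h(\mathbf u_j,\mathbf u_k).
\]
By degeneracy, $\E[D_k\mid\mathcal F_{k-1}]=0$. Two conditions need checking.

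\emph{(i) Conditional variance.} We need $\sum_k\E[D_k^2\mid\mathcal F_{k-1}]\to1$ in probability. For fixed $\mathbf u_j,\mathbf u_l$, the covariance of $(\mathbf u\trans\mathbf u_j)^2$ and $(\mathbf u\trans\mathbf u_l)^2$ over uniform $\mathbf u$ equals
\[
\frac{2(\mathbf u_j\trans\mathbf u_l)^2+1}{n(n+2)}-\frac1{n^2}.
\]
So the conditional variance reduces to a constant plus terms in $\sum_{j\ne l<k}\big((\mathbf u_j\trans\mathbf u_l)^2-1/n\big)$. Its mean is $1+o(1)$, and a second-moment bound using the same degeneracy gives variance $O(1/n+1/p)$.

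\emph{(ii) Lyapunov condition.} We need $\sum_k\E D_k^4\to0$. Expanding the fourth power and using the sphere moments $\E(\mathbf u_j\trans\mathbf u_k)^{2m}=O(n^{-m})$, the degenerate structure kills terms with an index appearing once. The result is $\E D_k^4=O\big((n/p)^4(k^2/n^4+k/n^4)\big)$, so the sum is $O(1/p)$.

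The main obstacle is step (i). There the fourth-order cross moments of the $\mathbf u_j$ must be controlled carefully for $p/n\to y$ with $y$ not small. Sloppy bounds lose a factor of $p/n$ there, so I would compute $\E[(\mathbf u\trans A\mathbf u)^2]$ exactly for uniform $\mathbf u$ and symmetric $A$:
\[
\E[(\mathbf u\trans A\mathbf u)^2]=\frac{(\tr A)^2+2\tr A^2}{n(n+2)},
\]
applied with $A=\sum_{j<k}c_j\mathbf u_j\mathbf u_j\trans$.
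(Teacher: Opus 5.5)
Your proposal is correct and follows essentially the paper's route. The paper quotes this theorem without reproving it, but its proof of Theorem~\ref{thm:CLT} covers the Gaussian case and uses the same martingale differences $\sum_{i<k}\bigl((\Y_i\trans\Y_k)^2-1/n\bigr)$, the same degeneracy $\E[(\Y_i\trans\Y_k)^2\mid\Y_i]=1/n$, a quadratic-form covariance identity for the conditional variance, and a fourth-moment Lyapunov bound. For spherical data the Hadamard-product coefficient $n(n+2)\E Y_1^4-3$ in Proposition~\ref{quadratic_form_symmetric} vanishes, so your exact sphere formulas are just its specialization, and the variance of the summed conditional variances is actually $O(n^{-2})$, so your step (i) is not a real obstacle.
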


In random matrix theory, studying the limiting behavior of the linear spectral statistics (including $\tr(\R^2)$ and $\log|\R|$) of sample correlation matrices has been a central topic. Specifically, assuming $Z_1,\cdots, Z_p$ are i.i.d. random variables, \cite{gao2017high} show the asymptotic normality \eqref{clt0} still holds whenever $\E(Z_{ij}^4)<\infty$. See also \cite{mestre2017correlation} and \cite{zheng2019test}.

Since the sample correlation matrix is self-normalized, it is interesting to study the robustness of its linear spectral statistics  to heavy-tailed populations. \cite{heiny2024log} first observed a surprising fact that unlike the CLT for $\log|\bS|$, the CLT for $\log|\R|$ holds even for populations with infinite fourth moment.  Recently, \cite{li2024necessary} proposed a necessary and sufficient condition 
\begin{align} 
	x^3P(|X_1|>x)\to 0,\quad x\to\infty,\label{form:NSC_CLT}
\end{align}
under which the asymptotic distribution of linear spectral statistics is still normal. By the main result of \cite{li2024necessary}, the CLT \eqref{clt0} for $\tr(\R^2)$ holds under the condition \eqref{form:NSC_CLT} and p and n are of the same order.  But it is an open question whether there is a CLT and how it is dependent on $\alpha$ when $\alpha$ is less than three.

Note that nonparametric rank statistics such as
Kendall's tau and Spearman's rho perform well for continuous heavy-tailed distributions. They become much more involved when handling ties which are inherent in discrete data.
To the best of our knowledge, no theoretical results exist for high-dimensional
nonparametric statistics under discrete heavy-tailed distributions.

In view of the above, the aim of this work is to establish the CLT for Schott's statistic for general $\alpha$-regularly varying populations.  
It turns out that CLT of $\tr(\R^2)$ holds for general $\alpha>0$.
Roughly speaking, when $\alpha>3$, the asymptotic distribution  is $\alpha$-free and only depends on $n$ and $p$. When $\alpha<3$,  a phase transition happens where the leading term of the variance depends on $\alpha$. Therefore, we find a new renormalization constant dependent on $\alpha$ to ensure the validity of central limit theorem when $\alpha<3$. Following the seminal heavy-tailed random matrix works of \cite{belinschi2009free} and \cite{benaych2014spiked}, our work is the first to establish full-range CLT for spectral statistics of sample correlation matrices under heavy tails. Our proof does not rely on the framework of random matrix theory such that 
$p$ and $n$ are not required to be of comparable magnitude. In other words, we allow $p$ to be much larger than $n$ once $\alpha$ is large enough.

The overall proof strategy is as follows. Rewrite the sample correlation matrix as 
\begin{align*}
	\R= \begin{pmatrix}
		\|\X_1\|^{-1}&&\\
		&\ddots&\\
		& &\|\X_p\|^{-1}
	\end{pmatrix} \left(\Z_1,\cdots,\Z_n\right)   \begin{pmatrix}
		\Z_1\trans\\
		\vdots\\
		\Z_n\trans
	\end{pmatrix}\begin{pmatrix}
		\|\X_1\|^{-1}&&\\
		&\ddots&\\
		& &\|\X_p\|^{-1}
	\end{pmatrix}.
\end{align*}
Consider the Gram matrix of $\R$. By (\ref{1-April}) we have
\begin{align*}
	\B=\left(\X_1,\cdots,\X_p\right)  \begin{pmatrix}
		\|\X_1\|^2&&\\
		&\ddots&\\
		& &\|\X_p\|^2
	\end{pmatrix}^{-1} \begin{pmatrix}
		\X_1\trans\\
		\vdots\\
		\X_p\trans
	\end{pmatrix}=\sum_{i=1}^p \frac{\X_i\X_i\trans}{\|\X_i\|^2}.
\end{align*}
This is the so-called spatial-sign covariance matrix proposed by \cite{locantore1999robust}. A key observation is that $\tr(\R^2)=\tr(\B^2)$ so that the CLT for $\tr(\R^2)$ in \eqref{clt0} can be established by studying the linear spectral statistics of $\B$. 
When $\alpha<1$, the mean of the underlying random variables does not exist so that it is challenging to investigate the Schott statistic for non-symmetric heavy tailed distributions. Hence we turn to centralized Schott's statistic for general heavy tailed distributions. We use an important fact that the magnitudes of the maximum of i.i.d heavy tailed random variables and the sum of i.i.d heavy tailed random variables share the same order. We also make use of the exchangeability of self-normalized random variables and a key fact that the sum of the centralized self-normalized random variables is equal to zero. 


To the best of our knowledge, our work is the first one which systematically derives the asymptotic normality of $\tr(\R^2)$ for arbitrary $\alpha>0$. We also relax the constraint on the ratio of the data dimension $p$ and sample size $n$. In particular, the test statistics are applicable to high dimensional data with $p \to \infty$ 
if the underlying population has enough moments (e.g., $\alpha>5$) and the sample size $n$ tends to infinity. An important implication of our main results is that Schott's statistic after standardization still works well for heavy-tailed distributions, unlike nonparametric rank statistics such as Kendall's tau and Spearman's rho, which are designed for continuous distributions.
We summarize our contributions below:
\begin{enumerate}
	\item 
	We obtain the asymptotic normality for symmetric distributions and all $\alpha>0$,     \begin{align*}
		\frac{\tr(\R^2)-p-p(p-1)/n}{\sqrt{V_n(\alpha)}}\stackrel{d}{\to}N(0,1),
	\end{align*}
	where the explicit definition of $V_n(\alpha)$ is given in \eqref{def_variance} below.
	There exists a phase transition for the leading term of $V_n(\alpha)$ as $\alpha$ changes. When condition \eqref{form:NSC_CLT} holds, we recover the result of \cite{li2024necessary}, e.g.,  
	\begin{align*}
		\frac{n}{2p}\left(\tr(\R^2)-p-\frac{p(p-1)}{n}\right)\stackrel{d}{\to}N(0,1).
	\end{align*}   
	and when $\alpha<3$, $V_n(\alpha)$ depends on $\alpha$. Particularly, for $\alpha<2$, we have a neat new CLT as follows,
	\begin{align*}
		\frac{\sqrt{2n}}{(2-\alpha)p}\left(\tr(\R^2)-p-\frac{p(p-1)}{n}\right)\stackrel{d}{\to}N(0,1),~0<\alpha<2.
	\end{align*} 
	
	\item To ensure the asymptotic normality 
	the conditions about $n,p,\alpha$ are imposed as follows
	\begin{align*}
		\begin{cases}
			\frac{n}{p^2}\to 0,&\alpha<2,\\
			\frac{n^{\alpha-1+\epsilon}}{p^2}\to 0, &2\leq\alpha \leq 3,\\
			\frac{n^{5-\alpha+\epsilon}}{p^2}\to 0,&3<\alpha\leq 5,\\
			n, p \to \infty,&\alpha>5,
		\end{cases}
	\end{align*}
	for some $\epsilon>0$. Generally speaking,  the greater the value of $\alpha$, the less restriction on $p$. In particular, when $\alpha>5$, we only need $n, p \to\infty$ which means Schott's test is valid for high-dimensional data.
\item To accommodate unknown location parameters, we consider the centralized Pearson's sample correlation matrix $ \widetilde{\R}$ and obtain the following CLT for general distributions and all $\alpha>0$: 
\begin{align*}
\frac{\tr(\widetilde{\R}^2)-p-p(p-1)/(n-1)}{\sqrt{V_n(\alpha)}}\stackrel{d}{\to}N(0,1).
\end{align*}
This extends the existing CLTs to allow for unknown means 
and to the best of our knowledge, this is the first such result valid for all regularly varying indices $\alpha>0$.
\end{enumerate}

The following notations are used 
in the subsequent exposition. For a vector $\x$, $\|\x\|$ is the Frobenius norm. For two sequence of real positive integers $\{a_n\}$ and $\{b_n\}$, write $a_n \sim b_n$ if $a_n/b_n \to 1$.  Let $C, C_1,\ldots$ be a sequence of generic constants which may take different values at various places.

\section{Main results}\label{sec2}

\begin{assumption} Let $\{X_{ij},i=1,\ldots,p,j=1,\ldots,n\}$ be an array of i.i.d. random variables from the population $X$.  We assume that $X$ is regularly varying with index $\alpha>0$, i.e.,
\begin{align*}
P(|X|>x)\sim x^{-\alpha}l(x),\quad x\to\infty,
\end{align*}
where $l$ is a slowly varying function, i.e.
\begin{align*}
\frac{l(\lambda x)}{l(x)}\to 1,\quad x\to\infty,\quad\forall\lambda>0.
\end{align*}
\end{assumption}
We further define another slowly varying function
\begin{align} \label{def:tl}
\widetilde{l}(x)=\int_{0}^x l(t)/t dt,
\end{align}
where $l(x)/\widetilde{l}(x) \to 0$ and $a_n$ is a positive sequence satisfying $2na_n^{-1}\widetilde{l}(\sqrt{a_n})\to 1$. More details can be found in \cite{bingham1989regular}.  Throughout the paper, unless otherwise specified, we consider the joint asymptotic regime
\[
n\to\infty
\qquad\text{and}\qquad
p=p_n\to\infty.
\]
No specific limiting ratio between \(p\) and \(n\) is imposed; their relative growth rates are specified separately in the assumptions of each theorem.

Write
\begin{align*}
\X_i=\begin{pmatrix}
X_{i1}\\
\vdots\\
X_{in}
\end{pmatrix}, \quad i=1,\ldots,p.
\end{align*}
The Schott statistic can be rewritten as
\begin{align*}
T=\sum_{i,j=1}^p\frac{\left(\X_i\trans\X_j\right)^2}{\|\X_i\|^2\|\X_j\|^2}=p+\sum_{i\not=j}\frac{\left(\X_i\trans\X_j\right)^2}{\|\X_i\|^2\|\X_j\|^2}.
\end{align*}
We centralize the sample vectors for general distributions with unknown means, yielding the form of Schott's statistic
\begin{align*}
\widetilde{T}=p+\sum_{i\not=j}\frac{\left((\X_i-\bar{X}_i\one_n)\trans(\X_j-\bar{X}_j\one_n)\right)^2}{\|\X_i-\bar{X}_i\one_n\|^2 \|\X_j-\bar{X}_j\one_n\|^2},
\end{align*}
where 
\begin{align*}
\bar{X}_i=\frac{1}{n}\sum_{j=1}^n X_{ij},\quad \one_n=\begin{pmatrix}
1\\
\vdots\\
1
\end{pmatrix}\in\mR^n.
\end{align*}

\subsection{Symmetric distribution case}
This part is to consider the case when $X$ is symmetrically distributed, i.e.,
\begin{align*}
P(X>x)=P(X<-x),\quad \forall x\in\mR.
\end{align*}
Define the self-normalized variables
\begin{align*}
\Y_i=\begin{pmatrix}
Y_{i1}\\
\vdots\\
Y_{in}
\end{pmatrix}=\frac{\X_i}{\|\X_i\|}, i=1,\ldots,p.
\end{align*}
Schott's statistic is
\begin{align*}
T=\sum_{i,j=1}^p\left(\Y_i\trans\Y_j\right)^2=p+\sum_{i\not=j}\left(\Y_i\trans\Y_j\right)^2.
\end{align*}

Note that the expectation of odd powers of the self-normalized variables is zero in this case. 
As a consequence, the expectation and variance of Schott's statistics can be derived explicitly.
\begin{proposition}\label{thm:expectation_and_variance}
Assume that $X$ is a regularly varying variable with index $\alpha>0$ and $X \ind -X$.  We have
\begin{align}
\E(T)=&p+\frac{p(p-1)}{n},\label{form:expectation}\\
\var(T)=&2p(p-1) \left[\frac{n(n+2)}{n-1} \left(\E Y_{11}^4\right)^2-\frac{6}{n-1}\E Y_{11}^4+\frac{2n+1}{n^2(n-1)} \right].\label{form:variance}
\end{align}
\end{proposition}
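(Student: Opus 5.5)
The plan is to reduce everything to moments of a single self-normalized vector, using two structural facts about $\Y_i=\X_i/\|\X_i\|$. First, since $X\ind -X$ and $\|\X_i\|$ is invariant under coordinatewise sign flips, $\Y_i$ is invariant in law under any coordinatewise sign change. It is also exchangeable in its coordinates. Second, $\sum_{k=1}^n Y_{ik}^2=1$ and $|Y_{ik}|\le 1$, so all moments exist for every $\alpha>0$; the event $\X_i=0$ is handled by the usual convention or has probability tending to zero. Together these give $\E Y_{11}^2=1/n$. Writing $m_4=\E Y_{11}^4$ and expanding $1=\E(\sum_k Y_{1k}^2)^2$ gives
\begin{align*}
q:=\E Y_{11}^2Y_{12}^2=\frac{1-n m_4}{n(n-1)},
\end{align*}
and any mixed moment containing an odd power of some coordinate vanishes.

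For the mean, set $W_{ij}=(\Y_i\trans\Y_j)^2$ for $i\neq j$. Conditioning on $\Y_i$ and using independence of $\Y_i$ and $\Y_j$ together with $\E Y_{jk}Y_{jl}=\delta_{kl}/n$, I get
\begin{align*}
\E(W_{ij}\mid \Y_i)=\sum_{k,l}Y_{ik}Y_{il}\E(Y_{jk}Y_{jl})=\frac1n\sum_k Y_{ik}^2=\frac1n .
\end{align*}
This conditional mean is deterministic, and summing over the $p(p-1)$ ordered pairs gives \eqref{form:expectation}.

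For the variance, write $T-p=2\sum_{i<j}W_{ij}$ and expand the variance over pairs of unordered pairs.
\begin{itemize}
\item Disjoint pairs are independent, so their covariance is zero.
\item Pairs sharing exactly one index, say $W_{12}$ and $W_{13}$, are also uncorrelated. Conditioning on $\Y_1$, the two variables are conditionally independent, and by the display above both have conditional mean $1/n$, a constant. Hence $\cov(W_{12},W_{13})=0$.
\end{itemize}
This is the key step. It is where both sign symmetry and the constraint $\sum_k Y_{1k}^2=1$ are used, and it kills the $O(p^3)$ terms. It follows that
\begin{align*}
\var(T)=4\binom p2\var(W_{12})=2p(p-1)\var(W_{12}).
\end{align*}

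It remains to compute $\E W_{12}^2=\E(\sum_k a_k)^4$ with $a_k=Y_{1k}Y_{2k}$. By independence of the two vectors and sign symmetry, only index patterns in which every index is paired survive. This gives
\begin{align*}
\E W_{12}^2=n m_4^2+3n(n-1)q^2=n m_4^2+\frac{3(1-nm_4)^2}{n(n-1)} .
\end{align*}
Subtracting $(\E W_{12})^2=1/n^2$ and collecting terms gives:
\begin{itemize}
\item the $m_4^2$ coefficient $n+3n/(n-1)=n(n+2)/(n-1)$;
\item the $m_4$ coefficient $-6/(n-1)$;
\item the constant $3/(n(n-1))-1/n^2=(2n+1)/(n^2(n-1))$.
\end{itemize}
This is exactly \eqref{form:variance}. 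The only real obstacle is recognizing the vanishing of the "one shared index" covariances. The remaining work is bookkeeping of the fourth-moment index patterns.
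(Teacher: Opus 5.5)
Your proof is correct and follows essentially the same route as the paper. The paper also uses $\E\,\Y\Y\trans=\frac{1}{n}\bI_n$ to get $\E(\Y_1\trans\Y_2)^2=1/n$ and $\cov\bigl((\Y_1\trans\Y_2)^2,(\Y_1\trans\Y_3)^2\bigr)=\var\bigl(\Y_1\trans(\tfrac1n\bI_n)\Y_1\bigr)=0$, then expands the fourth moment over paired index patterns and eliminates $\E Y_1^2Y_2^2$ via $1=n\E Y_1^4+n(n-1)\E Y_1^2Y_2^2$, exactly as you do. One small quibble: ``probability tending to zero'' cannot justify an exact finite-$n$ identity, so the event $\X_i=\mathbf{0}$ should simply be excluded by assuming $P(X=0)=0$, which the paper tacitly does as well.
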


As can be seen, the expectation remains independent of $\alpha$, while the variance is determined by $\E Y_{11}^4$, which is influenced by $\alpha$. To be more precise, by Proposition \ref{prop:even_power} below, we can obtain
\begin{align*}
\E Y_{11}^4\sim\begin{cases}
(1-\frac{\alpha}{2})n^{-1},&\alpha<2,\\
l(\sqrt{a_n})a_n^{-1},&\alpha=2,~\E X^2=\infty,\\
\frac{\alpha\Gamma(2-\frac{\alpha}{2})\Gamma(\frac{\alpha}{2})}{2}l(\sqrt{n})n^{-\frac{\alpha}{2}},&2 \leq \alpha<4, ~\E X^2<\infty,\\
4\widetilde{l}(\sqrt{n})n^{-2},&\alpha=4,~\E X^{4}=\infty,\\
\E X_1^{4} n^{-2},&\alpha\geq 4,~\E X^{4}<\infty,
\end{cases}
\end{align*}
where $\Gamma()$ is the Gamma function and $\tilde{l}()$ is the slowly varying function defined  in \eqref{def:tl}. As shown in \eqref{form:variance}, the limiting behavior of variance $\var(T)$ depends on the scale of $n\left(\E Y_{11}^4\right)^2$ and a phase transition occurs when $\alpha=3$. When $\alpha>3$, only the $2/n^2$ term matters, whereas for $\alpha\leq 3$, the $n\left(\E Y_{11}^4\right)^2$ term contributes, and its effect varies depending on the value of $\alpha$. 
The leading terms of $\var(T)$ are summarized as follows
\begin{align} \label{def_variance}
V_n(\alpha)=&\begin{cases}
2(1-\alpha/2)^2p^2/n,&\alpha<2,\\
l^2(\sqrt{a_n})p^2/(2n\widetilde{l}^2(\sqrt{a_n})),&\alpha=2,\\
\alpha^2\Gamma^2(2-\alpha/2)\Gamma^2(\alpha/2)l^2(\sqrt{n})p^2/\left(2 n^{\alpha-1}\right),&2<\alpha<3,\\
4p^2/n^2+9\pi^2l^2(\sqrt{n})p^2/(8 n^2),&\alpha=3,\\
4p^2/n^2,&\alpha>3.
\end{cases}
\end{align}

We next present 
the CLT of Schott's statistic when $X$ is symmetrically distributed.
\begin{theorem}\label{thm:CLT}
Assume that $X$ is a regularly varying variable with index $\alpha>0$, $X \ind -X$ and
\begin{align} \label{maincondition}
\begin{cases}
	\frac{n}{p^2}\to 0,&0<\alpha<2,\\
	\frac{n^{\alpha-1}}{p^2} \frac{\tilde{l}^2(\sqrt{a_n})}{l^2(\sqrt{a_n})}  \to 0, &\alpha=2,~\E X^{2}=\infty,\\
	\frac{n^{\alpha-1}}{p^2} \frac{1}{l^2(\sqrt{n})}  \to 0,&2\leq\alpha<3,~\E X^{2}<\infty,\\
	\frac{n}{p}\frac{l(\sqrt{n})}{l^2(\sqrt{n})+2}\to 0,&\alpha=3,\\
	\frac{n^{5-\alpha}l^2(\sqrt{n})}{p^2}\to 0,&3<\alpha\leq 5,\\
	n, p \to \infty,&\alpha>5.
\end{cases}
\end{align}
We have  
\begin{align*}
\frac{\tr(\R^2)-p-p(p-1)/n}{\sqrt{V_n(\alpha)}}\stackrel{d}{\to}N(0,1).
\end{align*}
\end{theorem}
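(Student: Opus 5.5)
We prove Theorem~\ref{thm:CLT} with a martingale CLT applied to the off-diagonal part of $T$, written through the self-normalized vectors $\Y_i$. Put $W_{ij}=(\Y_i\trans\Y_j)^2-1/n$ for $i\neq j$. Then
\[
T-p-\frac{p(p-1)}{n}=2\sum_{k=2}^p D_k,\qquad D_k=\sum_{i<k}W_{ik}.
\]
Each $\Y_i$ is uniformly distributed in sign and exchangeable in its coordinates, so $\E(\Y_i\trans\Y_k)^2=\sum_t \E Y_{it}^2\E Y_{kt}^2=1/n$. Hence $\E(W_{ik}\mid \Y_k)=0$, because $\E(\Y_i\Y_i\trans)=\bI_n/n$ by symmetry and exchangeability. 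The same argument gives $\E(D_k\mid\mathcal F_{k-1})=0$ with $\mathcal F_k=\sigma(\Y_1,\dots,\Y_k)$, so $\{D_k\}$ is a martingale difference sequence. By Proposition~\ref{thm:expectation_and_variance} and the asymptotics of $\E Y_{11}^4$, $4\sum_k\E D_k^2=\var(T)\sim V_n(\alpha)$. By the martingale CLT it then suffices to prove two things: the conditional variance satisfies $4\sum_k \E(D_k^2\mid\mathcal F_{k-1})/V_n(\alpha)\to1$ in probability, and a Lyapunov-type bound $\sum_k \E D_k^4=o(V_n^2)$ holds.

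For the conditional variance, expand
\[
\E(D_k^2\mid\mathcal F_{k-1})=\sum_{i,j<k}\Y_i\trans M\Y_i\text{-type forms},
\]
i.e.\ $\E(W_{ik}W_{jk}\mid \Y_i,\Y_j)=\E[(\Y_i\trans\Y_k)^2(\Y_j\trans\Y_k)^2]-n^{-2}$. Using symmetry, this is a polynomial in $\sum_t Y_{it}^2Y_{jt}^2$, $(\Y_i\trans\Y_j)^2$ and $\sum_tY_{it}^4$, with coefficients $\E Y_{11}^4$, $\E Y_{11}^2Y_{12}^2=(1-\E Y_{11}^4 \cdot n)/(n(n-1))$ and so on. The diagonal terms $i=j$ give $\sum_{i<k}\big[(3\E Y^2_{11}Y^2_{12})\cdot 1 +c_n\sum_tY_{it}^4\big]$. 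Their fluctuation is governed by $\var(\sum_t Y_{1t}^4)$, which must be shown to be $o$ of the square of the mean after dividing by $p$, using the moments $\E Y_{11}^8$ and $\E Y_{11}^4Y_{12}^4$ from Proposition~\ref{prop:even_power}. The off-diagonal terms $i\neq j$ are degenerate $U$-statistics in $\Y_1,\dots,\Y_{k-1}$ with kernel centered at $\E Y^4$-corrections. Their second moments involve $\E(\Y_1\trans\Y_2)^4$ and $\E(\sum_t Y_{1t}^2Y_{2t}^2)^2$, again computable by Proposition~\ref{prop:even_power}.

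The Lyapunov bound requires $\E D_k^4\lesssim k^2(\E W_{12}^2)^2+k\,\E W_{12}^4$ plus cross terms of the form $\E W_{1k}^2W_{2k}^2$. Here $\E W_{12}^4$ is controlled by eighth moments: $\E(\Y_1\trans\Y_2)^8\lesssim \sum \E Y^{8}\E Y^8\cdot n+\dots$. These bounds are where the conditions in \eqref{maincondition} enter. For $\alpha<2$ the moments $\E Y_{11}^{2m}$ are of order $1/n$, dominated by a single large coordinate, so $\E W_{12}^4\asymp n^{-1}$ while $V_n\asymp p^2/n$; the ratio $p n^{-1}/(p^4n^{-2})$ forces $n/p^2\to0$. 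The other regimes are analogous, with $\E Y_{11}^{2m}\sim c\,l(\sqrt n)n^{-\alpha/2}$ for $2m>\alpha$, which yields the thresholds $n^{\alpha-1}/p^2$ and $n^{5-\alpha}/p^2$.

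The main obstacle is the conditional-variance step in the heavy-tailed regime $\alpha\le 3$. There the variance is driven by the $n(\E Y_{11}^4)^2$ term, i.e.\ by the random quantities $\sum_tY_{it}^4$, which are not concentrated: for $\alpha<2$, $\sum_t Y_{it}^4$ is a nondegenerate random variable of order one. Concentration must therefore come from averaging over $i<k$ (a law of large numbers in $p$), not over $t$. I would handle this by computing $\var\big(\sum_{i<k}\sum_tY_{it}^4\big)$ explicitly through exchangeability, using the joint moment asymptotics of self-normalized regularly varying variables (Proposition~\ref{prop:even_power} and its mixed-moment analogues via the Laplace-transform representation $\|\X\|^{-2m}=\Gamma(m)^{-1}\int_0^\infty s^{m-1}e^{-s\|\X\|^2}ds$). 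The ratio to the squared mean is then $O(1/p)$ times a slowly varying factor, and this is negligible precisely under \eqref{maincondition}.
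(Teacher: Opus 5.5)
Your proposal follows the paper's proof essentially step for step: the same martingale differences $\sum_{i<k}\big((\Y_i\trans\Y_k)^2-1/n\big)$ normalized by $\var(T)\sim V_n(\alpha)$, the conditional variance computed through the quadratic-form covariance formula of Proposition~\ref{quadratic_form_symmetric} and controlled via independence across $i$ together with the degeneracy $\cov\big((\Y_1\trans\Y_2)^2,(\Y_1\trans\Y_3)^2\big)=\cov\big(\|\Y_1\circ\Y_2\|^2,\|\Y_1\circ\Y_3\|^2\big)=0$, and the Lyapunov condition through eighth moments of $\Y_1\trans\Y_2$, all of which the paper reduces to the single sufficient condition \eqref{clt0-0}. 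One heuristic is too optimistic but harmlessly so: for $2<\alpha<3$ the diagonal ($\sum_t Y_{it}^4$) part of the conditional variance is not $O(1/p)$ up to slowly varying factors, because $\var(\sum_t Y_{1t}^4)$ is of order $n\E Y_{11}^8\asymp l(\sqrt n)n^{1-\alpha/2}$, which gives relative size $n^{\alpha/2-1}/(p\,l(\sqrt n))$; this still vanishes under \eqref{maincondition}. (Also, your $\alpha<2$ Lyapunov ratio should read $(p^2/n)/(p^4/n^2)=n/p^2$.)
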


A phase transition occurs at $\alpha=3$. When $\alpha>3$ or $\alpha=3$ with $l(x)\to 0$, we have 
\begin{align*}
V_n(\alpha)\sim 4p^2/n^2,
\end{align*} 
such that
\begin{align*}
\frac{n}{2p}\left(\tr(\R^2)-p-\frac{p(p-1)}{n}\right)\stackrel{d}{\to}N(0,1).
\end{align*}  
Observe that $\alpha>3$ and $\alpha=3$ with $l(x)\to 0$ imply that 
\begin{align*} 
x^3P(|X_1|>x)\to 0,\quad x\to\infty.
\end{align*}
Our result is consistent with that in \cite{li2024necessary} in this case. In particular, \cite{li2024necessary} proved that the above condition is necessary and sufficient for general linear spectral statistics and the asymptotic CLT depends on the positive ratio $p/n$ only. Here we relax the constraint on the ratio of $p/n$ such that the ratio could either tend to zero or infinity. 

When $\alpha<3$, the variance $V_n(\alpha)$ is dominated by $2p^2 n\left(\E Y_{11}^4\right)^2$ and we need a new renormalization constant to ensure the validity of CLT. For $\alpha<3$, the variance $V_n(\alpha)$ increases as $\alpha$ decreases, indicating that heavier tails lead to larger fluctuations in Schott's statistic. For $2<\alpha<3$, the CLT is complex and for $\alpha<2$, we have a neat new CLT as follows,
\begin{align*}
\frac{\sqrt{2n}}{(2-\alpha)p}\left(\tr(\R^2)-p-\frac{p(p-1)}{n}\right)\stackrel{d}{\to}N(0,1),~0<\alpha<2.
\end{align*} 

\begin{remark}[Ratio condition]
To remove the slowly varying functions involved in (\ref{maincondition}), we can roughly impose the following ratio conditions between $p$ and $n$,   
\begin{align*}
\begin{cases}
	\frac{n}{p^2}\to 0,&\alpha<2,\\
	\frac{n^{\alpha-1+\epsilon}}{p^2}\to 0, &2\leq\alpha \leq 3,\\
	\frac{n^{5-\alpha+\epsilon}}{p^2}\to 0,&3<\alpha\leq 5,\\
	n, p \to \infty, &\alpha>5,
\end{cases}
\end{align*}
for some $\epsilon>0$. Generally speaking, the greater the value of $\alpha$, the weaker restriction on $p$. For example, when $\alpha>5$, we only need $n, p \to\infty$ which means Schott's test is valid for high dimensional data with $p \to \infty$. If we view Schott's statistics as a U-statistic of order two based on the sample correlation matrix, our results can be viewed as a kind of an extension of \cite{chen2010tests} which studied the U-statistics based on sample covariance matrices. 
\end{remark}

\subsection{General distribution cases for centralized data} 
This section is devoted to the CLT for Schott's statistic for centralized data. 
We first define the centralized self-normalized variables 
as follows
\begin{align}
\tbY_i=\frac{\X_i-\bar{X}_i\one_n}{\|\X_i-\bar{X}_i\one_n\|}=\frac{\Y_i-\bar{Y}_i\one_n}{\sqrt{1-n\bar{Y}_i^2}}, \quad i=1,\ldots,p,\label{form:def_Y_tilde}
\end{align}
where 
\begin{align}
\bar{X}_i=\frac{1}{n}\one_n^\top \X_i=\frac{1}{n}\sum_{k=1}^n X_{ik}, \quad \bar{Y}_i=\frac{1}{n}\one_n^\top \Y_i=\frac{1}{n}\sum_{k=1}^n Y_{ik},\label{form:def_Y_bar}
\end{align}
and 
\begin{align*}
\tY_{ik}=\frac{X_{ki}-\bar{X}_i}{\sqrt{\sum_{k=1}^nX_{ki}^2-n\bar{X}_i^2}}=\frac{Y_{ik}-\bar{Y}_i}{\sqrt{1-n\bar{Y}_i^2}}.
\end{align*}
The centralized Pearson's sample correlation matrix is defined as 
\begin{align*}
\widetilde{\R}=\left(\widetilde{r}_{ij}\right)_{p \times p},\quad \widetilde{r}_{ij}=\frac{\sum_{k=1}^nX_{ki}X_{kj}-n\bar{X}_i\bar{X}_j}{\sqrt{\left(\sum_{k=1}^nX_{ki}^2-n\bar{X}_i^2\right)\left(\sum_{k=1}^nX_{kj}^2-n\bar{X}_j^2\right)}}.
\end{align*}
In the form of self-normalized variables $\widetilde{r}_{ij}$ can be also written as
\begin{align*}
\widetilde{r}_{ij}=\frac{\sum_{k=1}^nY_{ik}Y_{jk}-n\bar{Y}_i\bar{Y}_j}{\sqrt{\left(1-n\bar{Y}_i^2\right)\left(1-n\bar{Y}_j^2\right)}}=\sum_{k=1}^n\tY_{ik}\tY_{jk}.
\end{align*}
Thus the centralized Schott statistic is 
\begin{align*}  
\widetilde{T}=\tr(\widetilde{\R}^2)=\sum_{i,j=1}^p\left(\widetilde{r}_{ij}\right)^2=p+\sum_{i\not=j}\left(\sum_{k=1}^n\tY_{ik}\tY_{jk}\right)^2=p+\sum_{i\not=j}\left(\tbY_i^\top \tbY_j\right)^2.
\end{align*}

Note that $\tY_{i1},\cdots,\tY_{in}$ are identically distributed random variables and exchangeable. 
A key observation is that
\begin{align*}
\sum_{k=1}^n \tY_{ik}=0,\quad \sum_{k=1}^n \tY_{ik}^2=1.
\end{align*}
Hence we have the trivial moment results
\begin{gather*}
\E \tY_1=0,\quad \E \tY_1^2=\frac{1}{n},\quad \E \left( \tY_1\tY_2\right)=-\frac{1}{n-1}\E \tY_1^2=-\frac{1}{n(n-1)},\\
\E \left(\tY_1^2\tY_2^2\right)=\frac{1}{n(n-1)}-\frac{1}{n-1}\E \tY_1^4, \quad \E \left(\tY_1^3\tY_2\right)=-\frac{1}{n-1}\E \tY_1^4.
\end{gather*}
Due to this elegant property we have for any $\alpha>0$,
\begin{align*}
\E\tr(\widetilde{\R}^2)=p+\frac{p(p-1)}{n-1}.
\end{align*}
We are now in a position to state CLT for the centralized data.
\begin{theorem}\label{thm:CLT3}
Suppose that $X$ is a regularly varying variable with index $\alpha>0$ and 
\begin{align} \label{maincondition-general}
\begin{cases}
\frac{n}{p^2}\to 0,&0<\alpha<2,\\
\frac{n^{\alpha-1}}{p^2} \frac{\tilde{l}^2(\sqrt{a_n})}{l^2(\sqrt{a_n})}  \to 0, &\alpha=2,~\E X^{2}=\infty,\\
\frac{n^{\alpha-1}}{p^2} \frac{1}{l^2(\sqrt{n})}  \to 0,&2\leq\alpha<3,~\E X^{2}<\infty,\\
\frac{n}{p}\frac{l(\sqrt{n})}{l^2(\sqrt{n})+2}\to 0,&\alpha=3,\\
\frac{n^{5-\alpha}l^2(\sqrt{n})}{p^2}\to 0,&3<\alpha\leq 5,\\
n, p \to \infty,&\alpha>5.
\end{cases}
\end{align} 
We have  
\begin{align*}
\frac{\tr(\widetilde{\R}^2)-p-p(p-1)/(n-1)}{\sqrt{V_n(\alpha)}}\stackrel{d}{\to}N(0,1).
\end{align*}
\end{theorem}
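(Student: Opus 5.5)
We prove Theorem~\ref{thm:CLT3} by writing the centralized statistic as a degenerate U-statistic in the independent columns $\tbY_1,\dots,\tbY_p$ and applying a martingale CLT, mirroring the proof of Theorem~\ref{thm:CLT}. The vectors $\tbY_i$ are i.i.d. in $i$. Each is exchangeable in its coordinates and lies on the sphere intersected with the hyperplane $\one_n^\perp$. Set $h(\tbY_i,\tbY_j)=(\tbY_i\trans\tbY_j)^2-\frac{1}{n-1}$. Then
\begin{align*}
\E\bigl[(\tbY_i\trans\tbY_j)^2\,\big|\,\tbY_i\bigr]=\tbY_i\trans\bigl(\E\tbY_j\tbY_j\trans\bigr)\tbY_i=\frac{1}{n-1}\tbY_i\trans\Bigl(\bI_n-\frac{1}{n}\one_n\one_n\trans\Bigr)\tbY_i=\frac{1}{n-1}.
\end{align*}
This holds because $\E\tbY_j\tbY_j\trans$ is forced by exchangeability and the constraints $\sum_k\tY_{jk}=0$, $\sum_k\tY_{jk}^2=1$ to equal $(n-1)^{-1}(\bI_n-n^{-1}\one_n\one_n\trans)$. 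So $h$ is exactly degenerate for every $\alpha>0$, with no symmetry needed. This also gives the stated mean. We then have $\tr(\widetilde{\R}^2)-p-p(p-1)/(n-1)=2\sum_{j=2}^p D_j$, where $D_j=\sum_{i<j}h(\tbY_i,\tbY_j)$ is a martingale difference sequence for the filtration $\sigma(\tbY_1,\dots,\tbY_j)$.

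Next we compute $\var(h)$ and $\E h(\tbY_1,\tbY_2)h(\tbY_1,\tbY_3)$ in terms of the mixed moments of $\tY$. Expanding $(\sum_k\tY_{1k}\tY_{2k})^2$ and using exchangeability together with the zero-sum identities listed before the theorem, every fourth-order mixed moment ($\E\tY_1^2\tY_2^2$, $\E\tY_1^3\tY_2$, $\E\tY_1^2\tY_2\tY_3$, $\E\tY_1\tY_2\tY_3\tY_4$) reduces to an explicit function of $n$ and $\E\tY_{11}^4$. The result is a formula of the same shape as \eqref{form:variance}, with $\E Y_{11}^4$ replaced by $\E\tY_{11}^4$ and corrections of relative order $1/n$. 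The key analytic input is then $\E\tY_{11}^4\sim\E Y_{11}^4$ under each regime, where the asymptotics of $\E Y_{11}^4$ come from Proposition~\ref{prop:even_power}. To get this we write $\tY_{11}=(Y_{11}-\bar Y_1)/\sqrt{1-n\bar Y_1^2}$ and show that $n\bar Y_1^2$ and $\bar Y_1$ are negligible: $\sqrt n\,\bar Y_1$ is a self-normalized sum, tight even for $\alpha<2$. When $\alpha<1$ the mean does not exist and a nonzero location is present, so we instead use that the maximum and the sum of heavy-tailed variables have the same order. This shows $Y_{11}^4$ dominates on the event where $X_{11}$ is the maximal entry, while $\bar Y_1^2$ contributes only $O(n^{-2})$-type terms. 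These are negligible against $V_n(\alpha)/p^2$, which is at least of order $n^{-2}$. It follows that $\var(\tr\widetilde{\R}^2)\sim V_n(\alpha)$. The cross term $\E h(\tbY_1,\tbY_2)h(\tbY_1,\tbY_3)$ vanishes by degeneracy.

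For the martingale CLT we verify two conditions. The first is the conditional variance condition $\sum_j\E[D_j^2\mid\mathcal F_{j-1}]/\var\to1$ in probability. The second is a Lyapunov condition $\sum_j\E D_j^4/\var^2\to0$. Writing $\E[D_j^2\mid\mathcal F_{j-1}]=\sum_{i,i'<j}g(\tbY_i,\tbY_{i'})$ with $g(u,v)=\E[h(u,\tbY)h(v,\tbY)]$, the diagonal part is handled by a law of large numbers. The off-diagonal part requires bounding $\E g(\tbY_1,\tbY_2)^2$, which involves eighth-order mixed moments such as $\E(\tbY_1\trans\tbY_2)^4$ and $\E\tr\bigl((\tbY_1\tbY_1\trans\tbY_2\tbY_2\trans)^2\bigr)$. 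The Lyapunov sum similarly involves $\E h^4$ and $\E h(\tbY_1,\tbY_2)^2h(\tbY_1,\tbY_3)^2$. Each of these is bounded by reducing, via exchangeability and the zero-sum constraint, to moments $\E\tY_{11}^{a_1}\cdots\tY_{1m}^{a_m}$. These in turn are bounded by the corresponding symmetric-case quantities $\E Y_{11}^{2k}$ from Proposition~\ref{prop:even_power} plus errors of order $\bar Y_1$.

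The main obstacle is this moment reduction without symmetry. Odd moments such as $\E\tY_1^3\tY_2\tY_3\tY_4$ no longer vanish, and they must be shown to be of the order obtained by repeatedly applying $\sum_k\tY_{1k}=0$, for example $\E\tY_1^3\tY_2=-(n-1)^{-1}\E\tY_1^4$. We would organize these moments by partition type and recursively eliminate singleton indices via the zero-sum identity. This expresses everything in terms of moments with all exponents at least $2$, which are nonnegative and comparable to the symmetric counterparts. Once these bounds match those used in the proof of Theorem~\ref{thm:CLT}, the growth conditions \eqref{maincondition-general} are exactly what make the error terms $o(V_n(\alpha)^2)$, and the CLT follows.
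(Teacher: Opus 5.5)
Your architecture is the paper's. It uses the same martingale differences and exact degeneracy from $\E\tbY\tbY\trans=(n-1)^{-1}(\bI_n-n^{-1}\one_n\one_n\trans)$. It reduces all fourth-order moments to $\E\tY_1^4$ by the zero-sum identities, relies on $\E\tY_1^4\sim\E Y_1^4$ (via max-versus-sum for $\alpha\le1$), and reduces the conditional-variance and fourth-moment conditions to eighth-order moments.

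\textbf{The gap is the control of $\bar Y_1$.} Tightness of the self-normalized sum gives $n\bar Y_1^2\to0$ in probability but no rate. (That sum is $n\bar Y_1=\sum_kX_{1k}/\|\X_1\|$; the bound $|\sqrt n\,\bar Y_1|\le1$ is trivial and carries no information.) On $A=\{n\bar Y_1^2>\epsilon\}$, where the factor $(1-n\bar Y_1^2)^{-k}$ is uncontrolled, exchangeability only gives $\E\tY_1^{2k}\one_A\le\E\tY_1^{2}\one_A=n^{-1}P(A)$. This bound must be:
\begin{itemize}
\item $o(l(\sqrt n)n^{-\alpha/2})$ for $2<\alpha<3$;
\item $o(n^{-3/2})$ for $\alpha>3$, to keep $n(\E\tY_1^4)^2=o(n^{-2})$;
\item $O(n^{-5/2})$ for the eighth moment when $\alpha>5$.
\end{itemize}
Only for $\alpha<2$, where $\E Y_1^4\asymp n^{-1}$, does convergence in probability suffice.

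The paper gets these rates from moment bounds, after translating to $\E X=0$. That translation is allowed because $\tbY_i$ is location invariant, and needed because otherwise $n\bar Y_1^2\to(\E X)^2/\E X^2$ when $\E X^2<\infty$.
\begin{itemize}
\item Proposition~\ref{prop:general_power} bounds mixed moments of the raw $Y$'s via the Laplace representation and $\E(Xe^{-sX^2})=-s\E(X^3e^{-s\theta X^2})$.
\item Lemma~\ref{lem:bounds_for_Y_bar} gives $\E\bar Y_1^l=O(n^{-l})$, hence $P(n\bar Y_1^2>n^{-1/2})=o(n^{-m})$ for all $m$.
\item Proposition~\ref{prop:even_power_for_tY} yields $\E\tY_1^{2k}=(1+o(1))(\E Y_1^{2k}+R_n)$ with $R_n=O(n^{-3})$ for $\alpha>3$.
\end{itemize}
This is essential, not cosmetic. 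For $\alpha>4$ the trivial bound $\E\tY_1^8\le\E\tY_1^4\asymp n^{-2}$ turns the diagonal part $n(\E\tY_1^8)^2$ of $\E(\tbY_1\trans\tbY_2)^8$ into a Lyapunov bound of order $n/p^2$. Neither $n^{5-\alpha}l^2(\sqrt n)/p^2\to0$ nor $n,p\to\infty$ controls that.

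Conversely, the obstacle you single out is the easy part. Eliminating singleton indices via $\sum_k\tY_{1k}=0$ is exactly Proposition~\ref{prop:general_power_for_tY}. After that elimination, however, moments with all exponents $\ge2$ need not be nonnegative when some exponent is odd (e.g.\ $\E\tY_1^3\tY_2^3\tY_3^2$). You must bound them by AM--GM before comparing with even moments, e.g.\ $|\tY_1^5\tY_2^3|\le\frac12\tY_1^4\tY_2^2(\tY_1^2+\tY_2^2)$, as the paper does.
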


Compared with Theorem \ref{thm:CLT}, the only difference is the asymptotic mean 
and there we use $n-1$ to replace $n$. This is another instance of the substitution principle \citep{zheng2015substitution}. 

\subsection{Consistent estimation for the asymptotic variance}
We need to estimate the asymptotic variance $V_n(\alpha)$ to implement the above CLT results in practice. However, it is challenging to estimate $V_n(\alpha)$ directly since it depends on the regularly varying index $\alpha$ and the slowly varying function $l(x)$, both of which are unknown in advance. In extreme value theory, estimating the regularly varying index $\alpha$ has been extensively studied.  For instance, two commonly used estimators are the Hill estimator \cite{hill1975simple} and the Pickands estimator \citep{pickands1975statistical}. 

Instead of estimating the variance through estimating $\alpha$ and $l(x)$ we turn to directly construct an estimator for $\E Y_{11}^4$ as
\begin{align*}
\frac{1}{np}\sum_{i=1}^p\sum_{j=1}^n \tY_{ij}^4.
\end{align*}
Plugging it into \eqref{form:variance} yields 
\begin{align*}
\widehat{V}=\frac{2}{n}\left(\sum_{i=1}^p\sum_{j=1}^n \tY_{ij}^4\right)^2+\frac{4p^2}{n^2}.
\end{align*}

\begin{theorem}\label{thm:CLT_distribution_free}
Under the assumptions of Theorem \ref{thm:CLT3}, we have
\begin{align*}
\frac{\tr(\widetilde{\R}^2)-p-p(p-1)/(n-1)}{\sqrt{ \widehat{V}}} \stackrel{d}{\to}N(0,1).
\end{align*}
\end{theorem}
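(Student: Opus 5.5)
By Theorem \ref{thm:CLT3} and Slutsky's lemma, it suffices to show $\widehat{V}/V_n(\alpha)\stackrel{P}{\to}1$. Write
\begin{align*}
\widehat{M}=\frac{1}{np}\sum_{i=1}^p\sum_{j=1}^n \tY_{ij}^4 ,
\qquad\text{so that}\qquad
\widehat{V}=2np^2\widehat{M}^2+4p^2/n^2 .
\end{align*}
Compare this with the leading part of \eqref{form:variance}, namely $V_n^*:=2p^2 n(\E Y_{11}^4)^2+4p^2/n^2$. Proposition \ref{prop:even_power} together with the case analysis behind \eqref{def_variance} gives $V_n^*\sim V_n(\alpha)$ in every regime. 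The term $4p^2/n^2$ is common to both, so the goal reduces to
\begin{align*}
n p^2\left|\widehat{M}^2-(\E Y_{11}^4)^2\right| = o\!\left(n p^2 (\E Y_{11}^4)^2 + p^2/n^2\right)\quad\text{in probability}.
\end{align*}
This in turn follows from two estimates: $\widehat{M}/\E Y_{11}^4\stackrel{P}{\to}1$ when $n(\E Y_{11}^4)^2$ is not negligible against $n^{-2}$, and $n^{3/2}\widehat{M}=O_P(1)\cdot o(1)+O_P(n^{3/2}\E Y_{11}^4)$ in general. The second case is automatic once the ratio statement holds, because $n^3(\E Y_{11}^4)^2\to0$ when $\alpha>3$.

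\emph{Step 1 (mean).} Show $\E\tY_{11}^4\sim\E Y_{11}^4$. Expand $\tY_{11}=(Y_{11}-\bar Y_1)/\sqrt{1-n\bar Y_1^2}$. For symmetric $X$ one has $n\bar Y_1^2=O_P(1/n)\cdot n$ of order $n^{-1}\cdot$ (cross terms), since $\E(n\bar Y_1^2)=1/n\cdot n\cdot\frac1n\cdot n$. More carefully, $\E(\sum_k Y_{1k})^2=1$, so $n\bar Y_1^2=O_P(n^{-1})$.

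For general $X$ with a location shift, the shift is dominated by heavy tails only when $\alpha<1$. In that case I would use the fact stated in the introduction that the maximum and the sum of i.i.d.\ heavy-tailed variables share the same order, to show $1-n\bar Y_1^2$ stays bounded away from zero with high probability. I would then control $\E|\bar Y_1|^4$ and $\E Y_{11}^2\bar Y_1^2$ by exchangeability and $\sum_k\tY_{1k}^2=1$, showing these are $o(\E Y_{11}^4)$. This is presumably the same machinery already used to prove Theorem \ref{thm:CLT3}, so I would reuse its lemmas.

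\emph{Step 2 (concentration).} The rows $i=1,\dots,p$ are independent. Hence
\begin{align*}
\var(\widehat{M})=\frac{1}{p}\,\var\!\left(\frac1n\sum_j \tY_{1j}^4\right)\le \frac1{pn^2}\,\E\!\left(\sum_j\tY_{1j}^4\right)^2\le \frac{1}{pn^2}\,\E\sum_j\tY_{1j}^4=\frac{\E\tY_{11}^4}{pn},
\end{align*}
where the middle inequality uses $\sum_j\tY_{1j}^4\le1$. Chebyshev then gives
\begin{align*}
\widehat{M}/\E\tY_{11}^4-1=O_P\!\left((pn\,\E\tY_{11}^4)^{-1/2}\right).
\end{align*}
Since $\E Y_{11}^4\ge n^{-2}$ by Cauchy--Schwarz, this error is $O_P((n/p)^{1/2})$ at worst. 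So in regimes where $p/n\to\infty$ we are done directly.

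\emph{Step 3 (main obstacle: small $p/n$).} When $p$ is smaller than $n$ (allowed when $\alpha>3$), the crude bound above is insufficient. Here, however, only $2np^2\widehat M^2=o_P(p^2/n^2)$ is needed, i.e.\ $n^{3/2}\widehat M\stackrel{P}{\to}0$. Markov's inequality gives $\E n^{3/2}\widehat M\sim n^{3/2}\E Y_{11}^4\to0$ for $\alpha>3$, so this case is fine.

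The delicate range is $\alpha\le3$, and $\alpha=3$ in particular, where both terms compete. There I need the ratio $\widehat M/\E Y_{11}^4\to1$ with only $p\gg n^{\alpha-1}/p$, i.e.\ under the condition in \eqref{maincondition-general}. For this I would sharpen Step 2 with a finer variance bound, $\var(\sum_j\tY_{1j}^4)\lesssim n\E\tY_{11}^8+n^2\cov(\tY_{11}^4,\tY_{12}^4)$, evaluated using Proposition \ref{prop:even_power} for the eighth moment. The target is $(pn\E Y_{11}^4)^{-1}\cdot n\E Y^8_{11}/(n\E Y_{11}^4)=o(1)$. Verifying this under the stated rate conditions in each of the $\alpha$-cases is the step I expect to require the most care.
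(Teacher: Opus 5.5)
Your architecture is the paper's. It uses Slutsky, then $\E\tY_{11}^4\sim\E Y_{11}^4$ (this is exactly Proposition~\ref{prop:4th_power_for_tY}, so Step~1 can simply be cited), then Chebyshev for $\widehat M/\E\tY_{11}^4$ when $\alpha\le3$, and Markov for $n^{3/2}\widehat M$ when $\alpha>3$. The gap is that you never close the $\alpha\le3$ case. You weaken your Step~2 bound, via $\E\tY_{11}^4\ge n^{-2}$, to $O_P((n/p)^{1/2})$, conclude only when $p/n\to\infty$, and defer the rest to an eighth-moment computation you do not carry out. Your remark that $p<n$ is ``allowed when $\alpha>3$'' also understates the problem. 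Under \eqref{maincondition-general}, $p=o(n)$ is allowed for every $\alpha<3$; for instance, $\alpha<2$ only requires $p\gg\sqrt n$. So as written your argument misses most of the heavy-tailed range, which is precisely where the theorem is new.

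The missing step is already in your own Step~2, provided you keep the true size of $\E\tY_{11}^4$ instead of discarding it. The bound $\var(\widehat M)/(\E\tY_{11}^4)^2\le (pn\E\tY_{11}^4)^{-1}$, together with Propositions~\ref{prop:even_power} and \ref{prop:4th_power_for_tY}, gives $pn\E\tY_{11}^4\to\infty$ in every case with $\alpha\le3$:
\begin{itemize}
\item For $\alpha<2$, it is $\sim(1-\alpha/2)p$.
\item For $\alpha=2$ with $\E X^2=\infty$, it is $\sim p\,l(\sqrt{a_n})/(2\widetilde l(\sqrt{a_n}))$, which is $\gg\sqrt n$ by the hypothesis.
\item For $2\le\alpha<3$ with $\E X^2<\infty$, it is $\asymp p\,l(\sqrt n)\,n^{1-\alpha/2}$. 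Since $n^{\alpha-1}/(p^2l^2(\sqrt n))\to0$, this is $\gg\sqrt n$.
\item For $\alpha=3$, it is $\asymp p\,l/\sqrt n=\frac{p(l^2+2)}{nl}\cdot\frac{\sqrt n\,l^2}{l^2+2}$. This diverges because the first factor diverges by assumption and $\sqrt n\,l^2(\sqrt n)\to\infty$ for any slowly varying $l$.
\end{itemize}
Hence $\widehat M/\E\tY_{11}^4\to1$ in probability for all $\alpha\le3$, including $\alpha=3$ with $l\to0$, and your Slutsky reduction then finishes the proof. Completed this way, your argument is slightly cleaner than the paper's. The paper bounds $\E\tY_{11}^8/(\E\tY_{11}^4)^2$ and $\E\tY_{11}^4\tY_{12}^4/(\E\tY_{11}^4)^2$ separately. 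Your observation that $0\le\sum_j\tY_{1j}^4\le1$ controls both at once and gives the same order. The ``finer'' eighth-moment bound you propose in Step~3 is therefore unnecessary.
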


\section{Application}\label{sec5}
An important application of correlation matrices is to test mutual independence of a p-dimensional random vector $\X=\left(X_1,\cdots,X_p\right)\trans$,
\begin{align*}
H_0:X_1,\cdots,X_p\text{ are independent}.
\end{align*}
Based on the empirical correlation matrices such as Pearson, Spearman and Kendall's correlation matrices, we can consider the test statistics based on the maximum norm or the Frobenius norm of the correlation matrices. In particular, Schott's statistic is based on the Frobenius norm of  Pearson's correlation matrix. Existing studies on testing correlation structures can be categorized according to the choice of the norm and the type of the correlation measure. 
For the Frobenius norm, results for the sample correlation matrix are given by \cite{gao2017high,zheng2019test}; those for Kendall’s tau are established in \cite{leung2018testing, li2021central}; and the results for Spearman’s rho are derived in \cite{bao2015spectral, leung2018testing, chen2024large}. For the maximum norm, the sample correlation matrix is studied in \cite{zhou2007asymptotic}, while both Kendall’s tau and Spearman’s rho are investigated in \cite{han2017distribution}.

By Theorem \ref{thm:CLT_distribution_free} we can construct a distribution-free test statistic based on Pearson's correlation matrix as follows,
\begin{align*}
\frac{\tr(\widetilde{\R}^2)-p-p(p-1)/(n-1)}{\sqrt{ \widehat{V}}} \stackrel{d}{\to}N(0,1),
\end{align*}
which is valid for regularly varying index $\alpha>0$.

\subsection{Continuous heavy-tailed distribution}
To examine the finite sample performance of our proposed statistics, we conduct null hypotheses from the following types of populations:
\begin{itemize}
\item[(1).] Student's t-distribution: $X_{ij}$ are i.i.d. $t(\alpha)$ for $1\leq i\leq n$ and $1\leq j\leq p$.
\item[(2).] Pareto distribution: $X_{ij}$ are i.i.d. $\mathrm{Pareto}(\alpha)$ for $1\leq i\leq n$ and $1\leq j\leq p$.
\end{itemize}
In details, Student's t-distribution is a symmetric population and Pareto distribution is asymmetric. As for numerical experiments, we take different combinations of sample size $n$, data dimension $p$ and regularly varying index $\alpha$. We adopt the data dimensionality settings from \cite{chen2010tests}, where the feature dimension $p$ satisfies $p=c\exp(n^\eta)$. For sample sizes $n=20,40,60,80$, the corresponding feature dimensions are $p=35,62,91,124$ for $(c,\eta)=(3,0.3)$ and $p=55,159,343,642$ for $(c,\eta)=(2,0.4)$.

Table \ref{table:emp_size} shows the empirical sizes of test statistics at a nominal level of $5\%$ based on 1000 replications. For comparison, we denote the maximum norm type statistics based on Pearson's correlation matrix, Kendall's tau and Spearman's rho, denoted as $L_{\R,\max},L_{\K,\max},L_{\brho,\max}$, respectively. The Frobenius norm type statistics are denoted as $L_{\R,2},L_{\K,2},L_{\brho,2}$, respectively. While the existing CLT of Schott's statistic is valid for light-tailed populations, our new CLT can obtain satisfactory sizes for all cases. Our results do not rely on the common framework of random matrix theory where $p/n \to c \in(0,+\infty)$, and the asymptotic distributions remain valid for high-dimensional data ($p/n \to +\infty$). 
\begin{table}[ht!]
\centering  
\caption{Empirical sizes of independence test statistics based on Pearson, Spearman and Kendall's correlations.}
\label{table:emp_size}
\begin{tabular}{@{\extracolsep{5pt}} ccccccccc} 
\\[-1.8ex]\hline 
\hline
$n$ & 20 & 40 & 60 & 80 & 20 & 40 & 60 & 80\\
\hline
$p$ & 35 & 62 & 91 & 124 & 55 & 159 & 343 & 642\\
\hline\\
\multicolumn{9}{c}{Normal distribution (t-distribution with $\alpha=+\infty$)}\\
\mbox{New} & 0.046 & 0.044 &0.044 & 0.047 & 0.047 & 0.031 & 0.036 & 0.049 \\ 
$L_{\R,2}$ & 0.070 & 0.050 & 0.048 & 0.054 & 0.053 & 0.046 & 0.044 & 0.057 \\ 
$L_{\R,max}$ & 0.000 & 0.004 & 0.013 & 0.013 & 0.000 & 0.000 & 0.002 & 0.003 \\ 
$L_{\brho,2}$ & 0.059 & 0.050 & 0.052 & 0.051 & 0.046 & 0.046 & 0.054 & 0.053 \\ 
$L_{\brho,max}$ & 0.000 & 0.003 & 0.016 & 0.021 & 0.000 & 0.001 & 0.003 & 0.004 \\ 
$L_{\K,2}$ & 0.086 & 0.065 & 0.061 & 0.063 & 0.082 & 0.061 & 0.057 & 0.061 \\ 
$L_{\K,max}$ & 0.013 & 0.015 & 0.035 & 0.034 & 0.005 & 0.014 & 0.017 & 0.010 \\ 
\hline \\
\multicolumn{9}{c}{t-distribution with $\alpha=1$}\\
\mbox{New} & 0.040 & 0.052 & 0.056 & 0.067 & 0.047 & 0.046 & 0.029 & 0.059 \\ 
$L_{\R,2}$ & 0.142 & 0.215 & 0.255 & 0.297 & 0.158 & 0.229 & 0.256 & 0.290 \\ 
$L_{\R,max}$ & 0.947 & 1.000 & 1.000 & 1.000 & 0.967 & 1.000 & 1.000 & 1.000 \\ 
$L_{\brho,2}$ & 0.054 & 0.056 & 0.061 & 0.036 & 0.056 & 0.049 & 0.055 & 0.040 \\ 
$L_{\brho,max}$ & 0.000 & 0.010 & 0.012 & 0.014 & 0.000 & 0.001 & 0.004 & 0.006 \\ 
$L_{\K,2}$ & 0.066 & 0.052 & 0.059 & 0.077 & 0.069 & 0.080 & 0.061 & 0.054 \\ 
$L_{\K,max}$ & 0.004 & 0.025 & 0.027 & 0.026 & 0.008 & 0.015 & 0.018 & 0.021 \\ 
\hline \\
\multicolumn{9}{c}{Pareto distribution with $\alpha=1$}\\
\mbox{New} & 0.053 & 0.046 & 0.044 & 0.066 & 0.053 & 0.053 & 0.037 & 0.055 \\ 
$L_{\R,2}$ & 0.174 & 0.230 & 0.248 & 0.300 & 0.189 & 0.248 & 0.263 & 0.296 \\ 
$L_{\R,max}$ & 0.983 & 1.000 & 1.000 & 1.000 & 0.994 & 1.000 & 1.000 & 1.000 \\ 
$L_{\brho,2}$ & 0.047 & 0.049 & 0.064 & 0.059 & 0.042 & 0.050 & 0.044 & 0.063 \\ 
$L_{\brho,max}$ & 0.000 & 0.004 & 0.013 & 0.013 & 0.000 & 0.004 & 0.006 & 0.001 \\ 
$L_{\K,2}$ & 0.086 & 0.066 & 0.058 & 0.056 & 0.074 & 0.069 & 0.067 & 0.052 \\ 
$L_{\K,max}$ & 0.012 & 0.019 & 0.026 & 0.024 & 0.008 & 0.015 & 0.017 & 0.017 \\ 
\hline \\
[-1.8ex]
\end{tabular} 
\end{table} 

\subsection{Discrete heavy-tailed distribution}
While nonparametric rank statistics such as Spearman’s $\brho$ and Kendall's $\K$ perform well for continuous heavy-tailed distributions, they rely on the continuity assumption and thus fail to account for ties, which are inherent in discrete data. To the best of our knowledge, no theoretical results exist for high-dimensional nonparametric statistics under discrete heavy-tailed distributions. In contrast, our analysis of Schott’s statistic for the Pearson's correlation matrix imposes no continuity assumptions and is therefore directly applicable to both continuous and discrete heavy-tailed distributions.

In this part, we consider Zipf distribution which is a discrete heavy-tailed distribution supported on positive integers.
For a tail index $\alpha>0$ and truncation $N$, its probability mass function is
\begin{align*}
\mathbb{P}(X=k) = \frac{1}{k^{\alpha+1} H_{N}(\alpha)}, \qquad k=1,2,\dots,N,
\end{align*}
where $H_{N}(\alpha) = \sum_{k=1}^N k^{-\alpha-1}$ denotes the generalized harmonic number.
As $N\to\infty$ and $\alpha>0$, its tail follows a power law
\begin{align*}
\mathbb{P}(X>x) \sim C x^{-\alpha}, \quad x\to\infty,
\end{align*}
indicating regular variation with index $\alpha$.

Table \ref{table:discrete_emp_size} presents
the empirical sizes of the tests at a nominal significance level of $5\%$ based on 1000 replications. For different values of the heavy-tailed index $\alpha$, we find that the empirical sizes are consistently close to $5\%$. This fully demonstrates the validity of our proposed statistic for discrete heavy-tailed variables.

\begin{table}[ht!]
\centering  
\caption{Empirical sizes of the proposed statistic under discrete distributions.}
\label{table:discrete_emp_size}
\begin{tabular}{@{\extracolsep{5pt}} ccccccccc} 
\\[-1.8ex]\hline 
\hline
$n$ & 20 & 40 & 60 & 80 & 20 & 40 & 60 & 80\\
\hline
$p$ & 35 & 62 & 91 & 124 & 55 & 159 & 343 & 642\\
\hline\\
$\alpha=0.5$ & 0.048 & 0.059 & 0.064 & 0.055 & 0.050 & 0.048 & 0.049 & 0.050 \\ 
$\alpha=1$ & 0.055 & 0.043 & 0.051 & 0.053 & 0.048 & 0.071 & 0.044 & 0.060 \\ 
$\alpha=1.5$ & 0.056 & 0.054 & 0.053 & 0.042 & 0.053 & 0.062 & 0.042 & 0.056 \\ 
$\alpha=2$ & 0.053 & 0.037 & 0.043 & 0.053 & 0.044 & 0.032 & 0.048 & 0.057 \\ 
\hline \\
[-1.8ex]
\end{tabular} 
\end{table} 

\newpage
	\section*{Appendix}
	
	\setcounter{section}{0}
	\setcounter{equation}{0}
	\def\theequation{A.\arabic{equation}}
	\def\thesection{A\arabic{section}}
	This Appendix contains all supporting lemmas and proofs. 

\section{Asymptotic results for self-normalized variables}
For i.i.d. random variables $X_1,\cdots,X_n$, we define the self-normalized variables $Y_1,\cdots,Y_n$
\begin{align*}
Y_i=\frac{X_i}{\sqrt{X_1^2+\cdots+X_n^2}},\quad 1\leq i\leq n.
\end{align*}
By the identity of Gamma function
\begin{align*}
\frac{1}{x^\beta}=\frac{1}{\Gamma(\beta)}\int_0^\infty s^{\beta-1}e^{-sx}ds,
\end{align*}
and Fubini's theorem, we obtain 
\begin{align*}
\E \left( Y_1^{k_1}\cdots Y_r^{k_r}\right)=\frac{1}{\Gamma(\frac{k}{2})}\int_0^\infty s^{\frac{k}{2}-1}\prod_{i=1}^r\E\left( X_i^{k_i}e^{-sX_i^2}\right)\left(\E e^{-sX_1^2}\right)^{n-r}ds,
\end{align*}
where $k=k_1+\cdots+k_r$. Asymptotic behavior concerning the moments of the self-normalized variables depends on the Laplace transformation $\varphi(s)=\E e^{-sX^2}$. See \cite{albrecher2007asymptotic} for more details.

We first collect two important lemmas about the Laplace transformation of heavy-tailed distributions and then present the main results of self-normalized variables.
\begin{lemma}[Corollary 8.1.7, Theorem 8.8.1 of \citealt{bingham1989regular} and Page 7 of \citealt{albrecher2007asymptotic}]\label{lem:exact_limiting_phi}
Assume that $P(|X|>x)\sim x^{-\alpha}l(x)$ where $l$ is a slowly varying function. Let $\varphi(s)=\E e^{-sX^2}$, then
\begin{align*}
	1-\varphi(s)\sim\begin{cases}
		\Gamma(1-\frac{\alpha}{2})s^{\frac{\alpha}{2}}l(\frac{1}{\sqrt{s}}),&\alpha<2,\\
		2s\widetilde{l}(\frac{1}{\sqrt{s}}),&\alpha=2,\E X^{2}=\infty,\\
		\E X^{2}s,&\alpha\geq2,\E X^{2}<\infty,
	\end{cases}\quad(s\to 0^+)
\end{align*}
and
\begin{align*}
	(-1)^k\varphi^{(k)}(s) \sim\begin{cases}
		\frac{\alpha}{2}\Gamma(k-\frac{\alpha}{2})s^{\frac{\alpha}{2}-k}l(\frac{1}{\sqrt{s}}),&2k>\alpha,\\
		2k\widetilde{l}(\frac{1}{\sqrt{s}}),&2k=\alpha,\E X^{2k}=\infty,\\
		\E X^{2k},&2k\leq\alpha,\E X^{2k}<\infty,
	\end{cases}\quad(s\to 0^+)
\end{align*}
where $\varphi^{(k)}(s)=\frac{d^k}{d s^k} \varphi(s)$. Here
$\widetilde{l}(x)=\int_{0}^x l(t)/t dt$ is itself a slowly varying function and $l(x)/\widetilde{l}(x) \to 0$.
\end{lemma}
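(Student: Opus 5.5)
The plan is to reduce every claim to Karamata's Abelian theorem for Laplace–Stieltjes transforms, applied to the law of $W=X^2$. Since $P(W>t)=P(|X|>\sqrt t)\sim t^{-\alpha/2}L(t)$ with $L(t):=l(\sqrt t)$ slowly varying, $W$ is regularly varying with index $\alpha/2$. Moreover $\varphi(s)=\E e^{-sW}=\int_0^\infty e^{-st}\,dF_W(t)$. The form of Abelian theorem I will use (Bingham et al., Theorem 1.7.1) is the following. If $U$ is nondecreasing on $[0,\infty)$ with $U(t)\sim t^{\rho}L(t)$ as $t\to\infty$, where $\rho\ge 0$, then
\begin{align*}
\int_0^\infty e^{-st}\,dU(t)\sim \Gamma(1+\rho)\,s^{-\rho}L(1/s),\qquad s\to0^+ .
\end{align*}
Throughout I use $L(1/s)=l(1/\sqrt s)$.

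\textbf{Step 1: $1-\varphi$.} Integrating by parts gives $1-\varphi(s)=s\int_0^\infty e^{-st}P(W>t)\,dt=s\int_0^\infty e^{-st}\,dU(t)$, where $U(t)=\int_0^tP(W>u)\,du$. I treat three cases.
\begin{itemize}
\item \emph{Case $\alpha<2$.} Karamata's theorem for integrals gives $U(t)\sim t^{1-\alpha/2}L(t)/(1-\alpha/2)$. The Abelian theorem with $\rho=1-\alpha/2$ then yields $1-\varphi(s)\sim s\cdot\frac{\Gamma(2-\alpha/2)}{1-\alpha/2}s^{\alpha/2-1}L(1/s)=\Gamma(1-\alpha/2)s^{\alpha/2}l(1/\sqrt s)$.
\item \emph{Case $\alpha=2$ with $\E X^2=\infty$.} The substitution $u=v^2$ gives $U(t)\sim\int^{\sqrt t}2\,l(v)/v\,dv=2\widetilde l(\sqrt t)$. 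The Abelian theorem with $\rho=0$ gives $1-\varphi(s)\sim 2s\widetilde l(1/\sqrt s)$. Here I need that $\widetilde l$ is slowly varying and $l/\widetilde l\to0$; this is Bingham et al., Proposition 1.5.9a, and it also justifies dropping the contribution of bounded $t$.
\item \emph{Case $\E X^2<\infty$.} Here $(1-e^{-sW})/s\uparrow W$ as $s\downarrow 0$, and monotone convergence gives $1-\varphi(s)\sim s\E X^2$.
\end{itemize}

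\textbf{Step 2: derivatives.} Differentiation under the expectation is justified by dominated convergence for $s>0$, and gives $(-1)^k\varphi^{(k)}(s)=\E W^ke^{-sW}=\int_0^\infty e^{-st}\,dm_k(t)$, where $m_k(t)=\E[W^k\mathbf 1\{W\le t\}]$ is the truncated moment. Writing $m_k(t)=-t^kP(W>t)+k\int_0^tu^{k-1}P(W>u)\,du$, I again treat three cases.
\begin{itemize}
\item \emph{Case $2k>\alpha$.} Karamata's theorem gives $m_k(t)\sim\big(\tfrac{k}{k-\alpha/2}-1\big)t^{k-\alpha/2}L(t)=\frac{\alpha/2}{k-\alpha/2}t^{k-\alpha/2}L(t)$. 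The Abelian theorem with $\rho=k-\alpha/2$ then gives $\frac{\alpha}{2}\frac{\Gamma(k-\alpha/2+1)}{k-\alpha/2}s^{\alpha/2-k}L(1/s)=\frac{\alpha}{2}\Gamma(k-\alpha/2)s^{\alpha/2-k}l(1/\sqrt s)$.
\item \emph{Case $2k=\alpha$ with $\E X^{2k}=\infty$.} The integral term is $k\int^t u^{-1}L(u)\,du\sim 2k\widetilde l(\sqrt t)$. The boundary term $t^kP(W>t)=L(t)$ is negligible because $l/\widetilde l\to0$. The Abelian theorem with $\rho=0$ then yields $2k\widetilde l(1/\sqrt s)$.
\item \emph{Case $\E X^{2k}<\infty$.} Monotone convergence gives the limit $\E X^{2k}$.
\end{itemize}

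\textbf{Main obstacle.} I expect the main difficulty to be the boundary case $2k=\alpha$ (including $k=1$, $\alpha=2$). There the natural integrand $u^{-1}L(u)$ is not integrable at infinity, and the asymptotics are carried by the de Haan–type function $\widetilde l$ rather than by $l$. The crucial inputs are that $\widetilde l$ is slowly varying and $l(x)/\widetilde l(x)\to0$: the first is needed so that the $\rho=0$ Abelian theorem applies, and the second so that the boundary term in the integration by parts is negligible. I would verify these first (via the uniform convergence theorem for slowly varying functions), and then the rest is routine bookkeeping of Gamma-function constants.
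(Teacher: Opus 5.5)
Your proposal is correct. The paper gives no proof of this lemma. It imports the statement from Bingham--Goldie--Teugels (Corollary 8.1.7, Theorem 8.8.1) and Albrecher et al., so the comparison is between a citation and your self-contained derivation.

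Your method has two ingredients:
\begin{itemize}
\item Karamata's theorem for integrals, which gives the asymptotics of the integrated tail $U(t)=\int_0^t P(X^2>u)\,du$ and of the truncated moments $m_k(t)=\E[X^{2k}\mathbf{1}\{X^2\le t\}]$.
\item The Abelian half of Theorem 1.7.1, applied to $U$ for $1-\varphi$ and to $m_k$ for $(-1)^k\varphi^{(k)}$.
\end{itemize}
This is the same Karamata Abelian--Tauberian machinery the cited results rest on, so the two routes agree in substance. The citation buys brevity. Your version makes the constants visible, e.g.\ $\Gamma(2-\alpha/2)/(1-\alpha/2)=\Gamma(1-\alpha/2)$ and $\frac{\alpha/2}{k-\alpha/2}\Gamma(k-\alpha/2+1)=\frac{\alpha}{2}\Gamma(k-\alpha/2)$. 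It also shows where each hypothesis enters.

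\textbf{Attribution in Step 1, case $\alpha=2$.} What lets you drop bounded $t$ and pass from $P(X^2>u)\sim u^{-1}l(\sqrt u)$ to $U(t)\sim 2\widetilde{l}(\sqrt t)$ is the divergence $\int_0^\infty P(X^2>u)\,du=\E X^2=\infty$. The same holds in Step 2 with $\E X^{2k}=\infty$. This is not a consequence of Proposition 1.5.9a.

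\textbf{Where $l/\widetilde{l}\to 0$ is needed.} You need it only in Step 2 for $2k=\alpha$, to make the boundary term $t^kP(X^2>t)=l(\sqrt t)(1+o(1))$ negligible. Step 1 has no boundary term. The slow variation of $\widetilde{l}$ is what the $\rho=0$ Abelian theorem requires.

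\textbf{Lower limit in $\widetilde{l}$.} Writing $\widetilde{l}$ with a fixed positive lower limit is the right reading. The paper's $\int_0^x l(t)/t\,dt$ need not converge at $0$. Any positive lower limit gives an asymptotically equivalent function, because the integral diverges at infinity in exactly the cases where $\widetilde{l}$ appears.
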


\begin{lemma}[Pages 349 and 373 of \citealt{bingham1989regular}]\label{lem:norming_constants}
Let $F$ be a distribution with positive support, and $\varphi$ be its Laplace transformation. For any $\beta \in(0,1]$, if 
\begin{align*}
	1-\varphi(s)\sim s^{\beta}l(\frac{1}{s}),\quad s\to 0^+,
\end{align*}
then, there exists a positive sequence $(a_n)_{n=1}^\infty$, $a_n\to\infty$, such that
\begin{align*}
	na_n^{-\beta}l(a_n)\to 1.
\end{align*}
\end{lemma}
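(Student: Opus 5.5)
The plan is to reduce the statement to the classical Tauberian correspondence between the behaviour of $1-\varphi$ at the origin and that of a slowly varying normalisation, that is, to an inversion of a regularly varying function. Set $f(s)=1-\varphi(s)$. By hypothesis $f(s)\sim s^{\beta}l(1/s)$ as $s\to0^+$ with $\beta\in(0,1]$ and $l$ slowly varying. Put $g(x)=f(1/x)$ for large $x$. Then $g(x)\sim x^{-\beta}l(x)$, so $g$ is regularly varying at infinity with index $-\beta<0$. Moreover $g$ is monotone: $\varphi$ is a Laplace transform of a positive measure, hence decreasing in $s$, so $f$ increases in $s$ and $g$ decreases in $x$. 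It is also continuous and tends to $0$. The goal is to find $a_n$ with $n\,g(a_n)\to1$, because $n a_n^{-\beta}l(a_n)\sim n\,g(a_n)$.

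The first step is to define $a_n=\inf\{x>0:\ g(x)\le 1/n\}$. This is well defined for large $n$, since $g\to0$, and $a_n\to\infty$, since $g(x)\to f(0^+)=0$ only as $x\to\infty$ and $g>0$ for finite $x$ (the distribution is not degenerate at $0$; if $F$ had an atom at $0$ one would have $1-\varphi\not\to 0$ at the right rate, which contradicts the hypothesis unless the mass is excluded). By continuity of $g$ we get $g(a_n)=1/n$ exactly, so $n g(a_n)=1$.

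The second step transfers this back to the statement: $n a_n^{-\beta} l(a_n) = n g(a_n)\cdot \frac{a_n^{-\beta}l(a_n)}{g(a_n)}\to 1$, using $g(x)\sim x^{-\beta}l(x)$ and $a_n\to\infty$. If continuity were unavailable, I would instead use the standard asymptotic inverse of a regularly varying function (Theorem 1.5.12 of \cite{bingham1989regular}): $g^{\leftarrow}(1/n)$ is regularly varying with index $1/\beta$ in $n$, and $g(g^{\leftarrow}(y))\sim y$ by the uniform convergence theorem, since the index $-\beta$ is nonzero.

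The only mildly delicate point is the continuity and monotonicity argument, or equivalently invoking the uniform convergence theorem so that $g(a_n)\sim 1/n$ holds along a possibly non-attained infimum. Everything else is bookkeeping. The restriction $\beta\le1$ plays no role beyond guaranteeing that this regime matches the one produced by Lemma \ref{lem:exact_limiting_phi} for $X^2$, where $\beta=\alpha/2$ or $\beta=1$.
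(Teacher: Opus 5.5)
Your proof is correct, and your main route differs from the argument the paper relies on. The paper gives no proof and cites the standard norming-constant construction in \cite{bingham1989regular}. There, $a_n$ is obtained by asymptotically inverting the regularly varying function $x^{-\beta}l(x)$, using the uniform convergence theorem and a monotone equivalent; this is essentially your fallback.

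Your main argument instead uses the fact that the Laplace transform already supplies a concrete representative $g(x)=1-\varphi(1/x)$ that is continuous, nonincreasing and positive. You can therefore invert it exactly and obtain $n\,g(a_n)=1$. The hypothesis $g(x)\sim x^{-\beta}l(x)$ is then needed only once, in the final transfer step.

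What each route gives:
\begin{itemize}
\item Your route is self-contained, avoids the uniform convergence theorem, and produces an exact defining equation for $a_n$.
\item The asymptotic-inverse route works for any regularly varying function of nonzero index, with no continuous monotone version given in advance. It also shows that $a_n$ is itself regularly varying in $n$ with index $1/\beta$.
\end{itemize}

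Two small corrections.
\begin{itemize}
\item Your parenthetical about an atom at $0$ is wrong, though the argument never needs it. An atom of mass less than one changes nothing, since $1-\varphi(s)=\int_{(0,\infty)}(1-e^{-sx})\,dF(x)$ still tends to $0$ and is strictly positive for $s>0$. The only degenerate case is $F=\delta_0$, which the hypothesis excludes. Positivity of $g$ on $(0,\infty)$ follows directly: the hypothesis forces $1-\varphi>0$ near $0$, and monotonicity extends this to all $s>0$.
\item For small $n$ your infimum may equal $0$, so set the first few $a_n$ arbitrarily (for instance equal to $1$). This does not affect the limit.
\end{itemize}
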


\begin{proposition}\label{prop:even_power}
Assume that $X_1,X_2,\cdots$ are i.i.d. with $P(|X_1|>x)\sim x^{-\alpha}l(x)$ where $l$ is a slowly varying function. For $k\geq 2$, we have 
\begin{align*}
	\E Y_1^{2k}\sim \begin{cases}
		\frac{\Gamma(k-\frac{\alpha}{2})}{\Gamma(k)\Gamma(1-\frac{\alpha}{2})}n^{-1},&\alpha<2,\\
		\frac{l(\sqrt{a_n})}{a_n(k-1)} ,&\alpha=2,~\E X_1^2=\infty,\\
		\frac{\alpha\Gamma(k-\frac{\alpha}{2})\Gamma(\frac{\alpha}{2})}{2\Gamma(k)}l(\sqrt{n})n^{-\frac{\alpha}{2}},&2 \leq \alpha<2k, ~\E X_1^2<\infty,\\
		2k \widetilde{l}(\sqrt{n})n^{-k},&\alpha=2k,~\E X_1^{2k}=\infty,\\
		\E X_1^{2k} n^{-k},&\alpha\geq 2k,~\E X_1^{2k}<\infty,
	\end{cases}
\end{align*}
where $a_n$ is a positive sequence satisfying $2na_n^{-1}\widetilde{l}(\sqrt{a_n})\to 1$. In all cases where \(\E X_1^2<\infty\), we normalize \(X_1\) such that \(\E X_1^2=1\).
\end{proposition}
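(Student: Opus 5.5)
The starting point is the Gamma-function representation displayed above with $r=1$, $k_1=2k$:
\begin{align*}
\E Y_1^{2k}=\frac{1}{\Gamma(k)}\int_0^\infty s^{k-1}\,\E\left(X_1^{2k}e^{-sX_1^2}\right)\varphi(s)^{n-1}\,ds
=\frac{1}{\Gamma(k)}\int_0^\infty s^{k-1}(-1)^k\varphi^{(k)}(s)\,\varphi(s)^{n-1}\,ds .
\end{align*}
The asymptotics then follow from a Laplace-type analysis. Since $\varphi(s)<1$ for $s>0$ and $\varphi$ is decreasing, $\varphi(s)^{n-1}$ decays geometrically away from any neighbourhood of $0$. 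The integral therefore concentrates on the scale $s\asymp s_n$, where $n(1-\varphi(s_n))\asymp 1$. On that scale we substitute the expansions of $1-\varphi$ and $(-1)^k\varphi^{(k)}$ from Lemma \ref{lem:exact_limiting_phi}, and use $\varphi(s)^{n-1}=\exp\{-(n-1)(1-\varphi(s))(1+o(1))\}$.

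The cases are handled as follows.

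\emph{Case $\alpha<2$.} We have $s_n$ with $ns_n^{\alpha/2}l(s_n^{-1/2})\Gamma(1-\alpha/2)=1$. Substituting $s=s_nt$ and using slow variation (Potter bounds), $n(1-\varphi(s_nt))\to t^{\alpha/2}$ and $s^{k-1}(-1)^k\varphi^{(k)}(s)\sim \tfrac{\alpha}{2}\Gamma(k-\tfrac{\alpha}{2})s_n^{\alpha/2}l(s_n^{-1/2})\,t^{\alpha/2-1}$. The integral becomes
\begin{align*}
\frac{\alpha\Gamma(k-\frac{\alpha}{2})}{2\Gamma(k)\Gamma(1-\frac{\alpha}{2})}\,n^{-1}\int_0^\infty t^{\alpha/2-1}e^{-t^{\alpha/2}}dt ,
\end{align*}
and the last integral equals $2/\alpha$. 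This gives the first line.

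\emph{Case $\alpha=2$ with $\E X_1^2=\infty$.} Here $s_n=1/a_n$, with $2n a_n^{-1}\widetilde l(\sqrt{a_n})\to1$. Then $n(1-\varphi(t/a_n))\to t$ and $(-1)^k\varphi^{(k)}(s)\sim \tfrac{2}{2}\Gamma(k-1)s^{1-k}l(s^{-1/2})$. The integral gives $\frac{\Gamma(k-1)}{\Gamma(k)}l(\sqrt{a_n})a_n^{-1}\int_0^\infty e^{-t}dt=\frac{l(\sqrt{a_n})}{(k-1)a_n}$.

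\emph{Case $\E X_1^2=1$.} Here $s_n=1/n$, and $\varphi(t/n)^{n-1}\to e^{-t}$.
\begin{itemize}
\item If $2\le\alpha<2k$, then $(-1)^k\varphi^{(k)}(s)\sim\tfrac{\alpha}{2}\Gamma(k-\tfrac{\alpha}{2})s^{\alpha/2-k}l(s^{-1/2})$, which yields $\frac{\alpha\Gamma(k-\alpha/2)}{2\Gamma(k)}l(\sqrt n)n^{-\alpha/2}\int_0^\infty t^{\alpha/2-1}e^{-t}dt$. This is the third line, since the last integral is $\Gamma(\alpha/2)$.
\item If $\alpha=2k$ with infinite $2k$-th moment, $(-1)^k\varphi^{(k)}\sim 2k\widetilde l(s^{-1/2})$, giving $\frac{2k\widetilde l(\sqrt n)}{\Gamma(k)}n^{-k}\int t^{k-1}e^{-t}dt=2k\widetilde l(\sqrt n)n^{-k}$.
\item If the $2k$-th moment is finite, $\varphi^{(k)}(0)$ is finite, and dominated convergence gives $\E X_1^{2k}n^{-k}$.
\end{itemize}

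\textbf{Main obstacle.} The main work is justifying the interchange of limit and integral in each case, i.e. dominating the integrand uniformly in $n$ on three regions: small $t$, the range $t\in[\delta,M]$, and large $s$.
\begin{itemize}
\item For large $s$ (say $s\ge\delta$), bound $\varphi(s)^{n-1}\le\varphi(\delta)^{n-k-1}\varphi(s)^{k}$ and use $s^{k-1}(-1)^k\varphi^{(k)}(s)\le C_k\,\E e^{-sX^2/2}/s\le C_k/s$ (since $x^ke^{-x}$ is bounded), which makes this piece exponentially small. Actually one uses $s^{k}\E(X^{2k}e^{-sX^2})\le C\,\E e^{-sX^2/2}=C\varphi(s/2)$, which is integrable against $s^{-1}\varphi(s)^{n-2}$ away from $0$.
\item For intermediate $s$ in $[Ms_n,\delta]$, use $1-\varphi(s)\ge c\,s^{\alpha/2-\epsilon}$ (respectively $cs$) via Potter bounds, so that $\varphi^{n}\le e^{-c t^{\alpha/2-\epsilon}}$. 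Potter bounds similarly control the ratio of slowly varying factors $l(s_n^{-1/2}t^{-1/2})/l(s_n^{-1/2})\le Ct^{\pm\epsilon}$, which supplies an integrable majorant near $t=0$ because the powers $t^{\alpha/2-1\mp\epsilon}$ are integrable for small $\epsilon$.
\end{itemize}
I expect the boundary cases $\alpha=2$ and $\alpha=2k$ to need the most care. There, the relation $l/\widetilde l\to0$ must be used to confirm that the claimed terms dominate.
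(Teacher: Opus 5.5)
Your proposal is correct and follows the paper's proof essentially step for step. Both arguments use the Gamma-function integral representation, rescale $s=t/a_n$ (resp.\ $t/n$), substitute the asymptotics of Lemma~\ref{lem:exact_limiting_phi}, evaluate the limiting Gamma-type integrals, and rely on Potter bounds for domination. Your domination argument is more explicit than the paper's one-line remark; in particular you treat the region $s\ge\delta$ separately. That region is handled most cleanly via $\int_0^\infty s^{k-1}(-1)^k\varphi^{(k)}(s)\,ds=\Gamma(k)P(X_1\neq 0)\le\Gamma(k)$, which bounds that piece by $\Gamma(k)\varphi(\delta)^{n-1}$ without any integrability condition near zero.
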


\begin{proof} Following \cite{albrecher2007asymptotic}, the expectation can be represented as 
\begin{align*}
	\E Y_1^{2k}=\frac{(-1)^k}{\Gamma(k)}\int_0^\infty s^{k-1}\varphi^{(k)}(s)\varphi^{n-1}(s)ds,
\end{align*}
where $\varphi(s)=\E e^{-sX_1^2}$.

If $0<\alpha<2$, by Lemma \ref{lem:norming_constants}, we can choose $a_n \to \infty$ such that 
\begin{align*}
	na_n^{-\frac{\alpha}{2}}l(\sqrt{a_n})\Gamma(1-\frac{\alpha}{2})\to 1,
\end{align*}
and by Lemma \ref{lem:exact_limiting_phi},
\begin{align*}
	\varphi^n(t/a_n)=\exp\left\{n\log\varphi(t/a_n)\right\}\sim \exp\left\{-n\Gamma(1-\alpha/2)\left(\frac{t}{a_n}\right)^{\frac{\alpha}{2}}l(\sqrt{a_n/t})\right\}\sim\exp\left\{-t^{\frac{\alpha}{2}}\right\}.
\end{align*}
This, together with Lemma \ref{lem:exact_limiting_phi}, implies that  
\begin{align*}
	&\frac{(-1)^k}{\Gamma(k)}\int_0^\infty s^{k-1}\varphi^{(k)}(s)\varphi^{n-1}(s)ds\\
	=&\frac{(-1)^k}{\Gamma(k)}\int_0^\infty \left(\frac{t}{a_n}\right)^{k-1}\varphi^{\left(k\right)}\left(a^{-1}_n t\right)\varphi^{n-1}\left(a_n^{-1}t\right)\frac{1}{a_n}dt\\
	\sim& \frac{1}{\Gamma(k)}\int_0^\infty \frac{t^{k-1}}{a_n^k}\left[\frac{\alpha}{2}\Gamma(k-\frac{\alpha}{2})(\frac{t}{a_n})^{\frac{\alpha}{2}-k}l(\frac{\sqrt{a_n}}{\sqrt{t}})\right]\varphi^n(\frac{t}{a_n})dt\\
	=&\frac{\alpha\Gamma(k-\frac{\alpha}{2})}{2\Gamma(k)\Gamma(1-\frac{\alpha}{2})}\cdot\frac{l(\sqrt{a_n})\Gamma(1-\frac{\alpha}{2})}{a_n^{\frac{\alpha}{2}}}\int_0^\infty t^{\frac{\alpha}{2}-1}\frac{l(\sqrt{a_n}/\sqrt{t})}{l(\sqrt{a_n})}\varphi^n(\frac{t}{a_n})dt\\
	\sim& \frac{\alpha\Gamma(k-\frac{\alpha}{2})}{2n\Gamma(k)\Gamma(1-\frac{\alpha}{2})}\int_0^\infty t^{\frac{\alpha}{2}-1}\exp\left\{-t^{\frac{\alpha}{2}}\right\}dt\\
	=&\frac{\Gamma(k-\frac{\alpha}{2})}{n\Gamma(k)\Gamma(1-\frac{\alpha}{2})}.
\end{align*}
By Potter’s bounds, the integral representation can be dominated by an integrable function after splitting the integral into \(t\le1\) and \(t>1\); hence dominated convergence applies.

If $\alpha=2$ and $\E X_1^2=\infty$, by Lemma \ref{lem:exact_limiting_phi} and Lemma \ref{lem:norming_constants}, we can choose $a_n$ such that 
\begin{align*}
	2na_n^{-1}\widetilde{l}(\sqrt{a_n})\to 1,
\end{align*}
and 
\begin{align*}
	\varphi^n(t/a_n)=\exp\left\{n\log\varphi(t/a_n)\right\}\sim \exp\left\{-2n\frac{t}{a_n}\widetilde{l}(\sqrt{a_n/t})\right\}\sim e^{-t}.
\end{align*}
And by Lemma \ref{lem:exact_limiting_phi}, we have 
\begin{align*}
	&\frac{(-1)^k}{\Gamma(k)}\int_0^\infty s^{k-1}\varphi^{(k)}(s)\varphi^{n-1}(s)ds\\
	=&\frac{(-1)^k}{\Gamma(k)}\int_0^\infty (\frac{t}{a_n})^{k-1}\varphi^{(k)}(\frac{t}{a_n})\varphi^{n-1}(\frac{t}{a_n})\frac{1}{a_n}dt\\
	\sim& \frac{1}{\Gamma(k)}\int_0^\infty \frac{t^{k-1}}{a_n^k}\left[\frac{\Gamma(k-1)t^{1-k}l(\frac{\sqrt{a_n}}{\sqrt{t}})}{a_n^{1-k}}\right]e^{-t}dt\\
	\sim& \frac{\Gamma(k-1)}{\Gamma(k)}  \frac{l(\sqrt{a_n})}{a_n}\int_0^\infty e^{-t}dt=\frac{l(\sqrt{a_n})}{(k-1)a_n}.
\end{align*}

When $\E X_1^2<\infty$ (without loss of generality, we let $\E X_1^2=1$) and $\alpha<2k$, by Lemma \ref{lem:exact_limiting_phi},
\begin{align*}
	&\frac{(-1)^k}{\Gamma(k)}\int_0^\infty s^{k-1}\varphi^{(k)}(s)\varphi^{n-1}(s)ds\\
	=&\frac{(-1)^k}{\Gamma(k)}\int_0^\infty (\frac{t}{n})^{k-1}\varphi^{(k)}(\frac{t}{n})\varphi^{n-1}(\frac{t}{n})\frac{1}{n}dt\\
	\sim&\frac{1}{\Gamma(k)}\int_0^\infty\frac{t^{k-1}}{n^k}\left[\frac{\alpha}{2}\Gamma(k-\frac{\alpha}{2})(\frac{t}{n})^{\frac{\alpha}{2}-k}l(\frac{\sqrt{n}}{\sqrt{t}})\right]e^{-t}dt\\
	\sim&\frac{\alpha\Gamma(k-\frac{\alpha}{2})l(\sqrt{n})}{2\Gamma(k)n^{\frac{\alpha}{2}}}\int_0^\infty t^{\frac{\alpha}{2}-1}e^{-t}dt\\
	=&\frac{\alpha\Gamma(k-\frac{\alpha}{2})\Gamma(\frac{\alpha}{2})}{2\Gamma(k)}n^{-\frac{\alpha}{2}}l(\sqrt{n}).
\end{align*}

When $\alpha=2k$ and $\E X_1^{2k}=\infty$, by Lemma \ref{lem:exact_limiting_phi},
\begin{align*}
	&\frac{(-1)^k}{\Gamma(k)}\int_0^\infty s^{k-1}\varphi^{(k)}(s)\varphi^{n-1}(s)ds\\
	=&\frac{(-1)^k}{\Gamma(k)}\int_0^\infty (\frac{t}{n})^{k-1}\varphi^{(k)}(\frac{t}{n})\varphi^{n-1}(\frac{t}{n})\frac{1}{n}dt\\
	\sim&\frac{1}{\Gamma(k)}\int_0^\infty\frac{t^{k-1}}{n^k}\left[2k\widetilde{l}(\frac{\sqrt{n}}{\sqrt{t}})\right]e^{-t}dt\\
	\sim&\frac{2k\widetilde{l}(\sqrt{n})}{\Gamma(k)n^k}\int_0^\infty t^{k-1}e^{-t}dt\\
	=&2kn^{-k}\widetilde{l}(\sqrt{n}).  
\end{align*}

When $\alpha\geq 2k$ and $\E X_1^{2k}<\infty$, by Lemma \ref{lem:exact_limiting_phi},
\begin{align*}
	&\frac{(-1)^k}{\Gamma(k)}\int_0^\infty s^{k-1}\varphi^{(k)}(s)\varphi^{n-1}(s)ds\\
	=&\frac{(-1)^k}{\Gamma(k)}\int_0^\infty (\frac{t}{n})^{k-1}\varphi^{(k)}(\frac{t}{n})\varphi^{n-1}(\frac{t}{n})\frac{1}{n}dt\\
	\sim&\frac{1}{\Gamma(k)}\int_0^\infty\frac{t^{k-1}}{n^k}\E X_1^{2k}e^{-t}dt\\
	=&\E X_1^{2k}n^{-k}.
\end{align*}

Collecting all the terms, we can conclude that
\begin{align*}
	\E Y_1^{2k}\sim \begin{cases}
		\frac{\Gamma(k-\frac{\alpha}{2})}{\Gamma(k)\Gamma(1-\frac{\alpha}{2})}n^{-1},&\alpha<2,\\
		\frac{l(\sqrt{a_n})}{a_n(k-1)},&\alpha=2,\E X_1^2=\infty,\\
		\frac{\alpha\Gamma(k-\frac{\alpha}{2})\Gamma(\frac{\alpha}{2})}{2\Gamma(k)}l(\sqrt{n})n^{-\frac{\alpha}{2}},&2\leq \alpha<2k,~\E X_1^2<\infty\\
		2k\widetilde{l}(\sqrt{n})n^{-k},&\alpha=2k,\E X_1^{2k}=\infty,\\
		\E X_1^{2k} n^{-k},&\alpha\geq 2k,~\E X_1^{2k}<\infty.
	\end{cases}
\end{align*}
\end{proof}

\begin{proposition}\label{prop:general_power}
Assume that $X_1,X_2,\cdots$ are i.i.d. regularly varying with $\alpha> 1$ and $\E X_1=0$. For positive integers $k_1,\cdots,k_r$, we denote $I_j=\{1\leq i\leq r:k_i=j\}$, $j=1,2,\cdots$.
\begin{itemize}
	\item If $1<\alpha\leq2$,
	\begin{align*}
		\E \left(Y_1^{k_1}\cdots Y_r^{k_r}\right)=O(n^{-r}).
	\end{align*}
	\item If $2<\alpha\leq3$, for any $\beta<\alpha$,
	\begin{align*}
		\E \left(Y_1^{k_1}\cdots Y_r^{k_r}\right)=O(n^{-\frac{\beta}{2}(r-|I_2|)-|I_2|}).
	\end{align*}
	\item If $\alpha>3$, for any $\beta<\alpha$,
	\begin{align*}
		\E \left(Y_1^{k_1}\cdots Y_r^{k_r}\right)=O(n^{-|I_1|-\frac{1}{2}\sum_{i=1}^{\lceil \alpha \rceil-1}i|I_i|-\frac{\beta}{2}(r-\sum_{i=1}^{\lceil \alpha \rceil-1}|I_i|)}).
	\end{align*}
\end{itemize}
\end{proposition}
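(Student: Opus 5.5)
The plan is to start from the Laplace-integral representation recalled at the beginning of the Appendix,
\begin{align*}
\E \left( Y_1^{k_1}\cdots Y_r^{k_r}\right)=\frac{1}{\Gamma(\frac{k}{2})}\int_0^\infty s^{\frac{k}{2}-1}\prod_{i=1}^r g_{k_i}(s)\,\varphi(s)^{n-r}ds,\qquad g_j(s)=\E\left( X_1^{j}e^{-sX_1^2}\right),
\end{align*}
and bound each factor $g_{k_i}(s)$ separately as $s\to0^+$. I then rescale $s=t/a_n$, following the proof of Proposition \ref{prop:even_power}, with $a_n$ chosen so that $\varphi^{n}(t/a_n)$ has a nondegenerate limit. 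For $\alpha>2$ we normalize $\E X_1^2=1$ and take $a_n=n$, so that $\varphi^{n-r}(t/n)\to e^{-t}$. For $1<\alpha\le 2$ we take the sequence $a_n$ from Lemma \ref{lem:norming_constants}, so that $\varphi^{n-r}(t/a_n)\to e^{-ct^{\alpha/2}}$ and $a_n^{-\alpha/2}l(\sqrt{a_n})\asymp n^{-1}$. After rescaling, the prefactor $s^{k/2-1}ds$ produces $a_n^{-k/2}$, and each factor $g_{k_i}(t/a_n)$ produces a power of $a_n$. The claimed rate should come out as the product of these contributions.

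\textbf{Step 1: pointwise bounds on $g_j$ for small $s$.} For $j=1$ the assumption $\E X_1=0$ gives $g_1(s)=\E X_1(e^{-sX_1^2}-1)$, so $|g_1(s)|\le \E|X_1|\min(1,sX_1^2)$. Splitting at $|X_1|=s^{-1/2}$ and using regular variation (Karamata) yields:
\begin{itemize}
\item $|g_1(s)|\le C s^{(\alpha-1)/2}l(s^{-1/2})$ when $1<\alpha<3$;
\item $|g_1(s)|\le Cs^{(\beta-1)/2}$ for any $\beta<\alpha$, which absorbs the boundary case $\alpha=3$;
\item $|g_1(s)|\le s\,\E|X_1|^3=O(s)$ when $\alpha>3$ and $\E|X_1|^3<\infty$ (otherwise again $O(s^{(\beta-1)/2})$ for $\beta<\alpha$ close to $\alpha$).
\end{itemize}
For $j\ge2$ I use $|g_j(s)|\le \E|X_1|^je^{-sX_1^2}$. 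This is $O(1)$ if $j<\alpha$ (or $\E|X_1|^j<\infty$), and $O(s^{(\beta-j)/2})$ for every $\beta<\alpha$ when $j\ge\alpha$, by the same truncation argument as in Lemma \ref{lem:exact_limiting_phi}. When $1<\alpha\le2$ I keep the sharper form $Cs^{(\alpha-j)/2}l(s^{-1/2})$.

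\textbf{Step 2: bookkeeping of the rates.} With $s=t/n$ for $\alpha>2$:
\begin{itemize}
\item an index with $k_i=1$ contributes $n^{-1/2}\cdot n^{-1}$ if $\alpha>3$, and $n^{-1/2}n^{-(\beta-1)/2}=n^{-\beta/2}$ if $2<\alpha\le3$;
\item an index with $2\le k_i=i<\alpha$ contributes $n^{-i/2}$;
\item an index with $k_i\ge\alpha$ contributes $n^{-k_i/2}n^{(k_i-\beta)/2}=n^{-\beta/2}$.
\end{itemize}
This reproduces the exponents $-|I_1|-\frac12\sum_{i<\lceil\alpha\rceil} i|I_i|-\frac\beta2(r-\sum_{i<\lceil\alpha\rceil}|I_i|)$ for $\alpha>3$, and $-\frac\beta2(r-|I_2|)-|I_2|$ for $2<\alpha\le3$. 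For $1<\alpha\le 2$, every factor, including $k_i=1$, contributes $a_n^{-k_i/2}a_n^{(k_i-\alpha)/2}l(\sqrt{a_n})\asymp n^{-1}$, giving $O(n^{-r})$. At $\alpha=2$ the slowly varying corrections are handled through the $\widetilde l$ normalization of Lemma \ref{lem:norming_constants}.

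\textbf{Step 3: justify the interchange of asymptotics and integral.} This is where I expect the main difficulty. I would split the integral into three pieces.
\begin{itemize}
\item On $s\ge s_0$: each $|g_j(s)|\le Cs^{-j/2}$ (using $\sup_x x^je^{-sx^2}$), and $\varphi(s)^{n-r}\le\varphi(s_0)^{n-r}$ decays geometrically, so this piece is negligible against any polynomial rate.
\item On $s\le s_0$ with large $t$: I use $\varphi(s)^{n-r}\le\exp\{-(n-r)(1-\varphi(s))\}$ together with Lemma \ref{lem:exact_limiting_phi} and Potter's bounds. This gives $1-\varphi(t/a_n)\ge c\,a_n^{-1}\ldots$ in the form $n(1-\varphi(t/a_n))\ge c\min(t^{\gamma},\ldots)$ for some $\gamma>0$, producing an integrable majorant $e^{-ct^{\gamma}}$ times a polynomial in $t$.
\item Near $t=0$: the majorant is $t^{k/2-1}\prod_i t^{\min(0,(\beta-k_i)/2)}$, possibly with an extra $t^{1/2}$ or $t$ from $g_1$. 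Its integrability follows because each factor has total exponent at least $\beta/2-1/2>-1/2$ or similar.
\end{itemize}
Checking that the Potter-bound losses $t^{\pm\epsilon}$ can be absorbed uniformly, especially in the $k_i=1$ factor where cancellation from $\E X_1=0$ is used, is the delicate part. Choosing $\beta<\alpha$ strictly is exactly what gives the needed slack there.
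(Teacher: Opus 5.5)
Your proposal is correct and follows essentially the same route as the paper: the Laplace-integral representation, separate bounds on each factor $\E(X^{k_i}e^{-sX^2})$ using $\E X_1=0$ to gain for $k_i=1$, and the rescaling $s=t/a_n$ (with $a_n=n$ when $\alpha>2$). The differences are only technical. You bound the $k_i=1$ factor via $|e^{-u}-1|\le\min(1,u)$ and a Karamata truncation at $|X|=s^{-1/2}$, where the paper uses the $\theta$-identity together with Cauchy--Schwarz or the $\sup_x x^{3-\beta}e^{-s\theta x^2}$ trick, and you spell out the Potter-bound domination of the rescaled integral that the paper leaves implicit (your rate $n^{-3/2}$ per index with $k_i=1$ for $\alpha>3$ also matches the stated exponent, whereas the paper's simplified exponent drops the $\tfrac12|I_1|$ contributed by $s^{k/2}$).
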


\begin{proof}
For any $k_1,\cdots,k_r$, we have 
\begin{align*}
	\E \left(Y_1^{k_1}\cdots Y_r^{k_r}\right)=\frac{1}{\Gamma(\frac{k}{2})}\int_0^\infty s^{\frac{k}{2}-1}\prod_{i=1}^r\E \left( X_i^{k_i}e^{-sX_i^2}\right)\varphi^{n-r}(s)ds.
\end{align*}

Firstly, we consider the case  $1<\alpha\leq2$. When $k_i=1$, by the identity     
\begin{align*}
	sx^2 \E_\theta e^{- s \theta x^2}=1-e^{-sx^2}, \quad \forall sx^2>0,
\end{align*} 
where $\theta$ is uniformly distributed on $[0,1]$, we can get  
\begin{align*}
	X_i e^{-sX_i^2}=X_i \left(1-s X_i^2 \E_{\theta} e^{- s \theta X_i^2}\right),
\end{align*}
which yields
\begin{align*}
	\E\left(X_i e^{-sX_i^2}\right)=\E X_i-s \E_{X_i}\left\{\E_{\theta} \left(X_i^3 e^{- s \theta X_i^2}\right)\right\}.
\end{align*}
Thus, by Lemma \ref{lem:exact_limiting_phi},
\begin{align*}
	\left|\E \left(X_ie^{-sX_i^2}\right)\right|=&s\left|\E \left(X_i^3e^{-s\theta_i X_i^2}\right)\right|\leq s\E_{\theta_i} \left|\E_{X_i} \left(X_i^3e^{-s\theta_i X_i^2}\right)\right|\\
	\leq& s\E_{\theta_i}\left(\E_{X_i} \left(X_i^2e^{-s\theta_i X_i^2}\right)\cdot\E_{X_i} \left(X_i^4e^{-s\theta_i X_i^2}\right)\right)^{\frac{1}{2}}\\
	\lesssim& s\E_{\theta_i}\left((\theta_i s)^{\frac{\alpha}{2}-1}l\left(\frac{1}{\sqrt{\theta_i s}}\right)\cdot(\theta_i s)^{\frac{\alpha}{2}-2}l\left(\frac{1}{\sqrt{\theta_i s}}\right)\right)^{\frac{1}{2}}\\
	=&s^{\frac{\alpha-1}{2}}\E_{\theta_i}\left[\theta_i^{\frac{\alpha-3}{2}}l\left(\frac{1}{\sqrt{\theta_i s}}\right)\right],
\end{align*}
where $\theta_i$ is uniformly distributed on $[0,1]$ and independent of $X_i$.

Next when $k_i$ is odd and $k_i\geq 3$,
\begin{align*}
	\left|\E \left(X_i^{k_i}e^{-sX_i^2}\right)\right|\leq&\left(\E \left(X_i^{2}e^{-sX_i^2}\right)\cdot\E \left(X_i^{2k_i-2}e^{-sX_i^2}\right)\right)^{\frac{1}{2}}\\
	\lesssim& \left(s^{\frac{\alpha}{2}-1}l(\frac{1}{\sqrt{s}})\cdot s^{\frac{\alpha}{2}-k_i+1}l(\frac{1}{\sqrt{s}})\right)^{\frac{1}{2}}=s^{\frac{\alpha-k_i}{2}}l(\frac{1}{\sqrt{s}}).
\end{align*}
Combining the two pieces, by Lemma \ref{lem:norming_constants}, we can conclude that 
\begin{align*}
	&\left|\E Y_1^{k_1}\cdots Y_r^{k_r}\right|\\
	\lesssim&\int_0^\infty s^{\frac{k}{2}-1}\prod_{i\in I_1} \left[s^{\frac{\alpha-1}{2}}\E_{\theta_i}\left(\theta_i^{\frac{\alpha-3}{2}}l(\frac{1}{\sqrt{\theta_i s}})\right)\right]\prod_{1\leq i\leq r;i\not\in I_1}\left[s^{\frac{\alpha-k_i}{2}}l(\frac{1}{\sqrt{s}})\right]\varphi^{n-r}(s)ds\\
	\lesssim&\left[\prod_{i\in I_1}\E_{\theta_i}\theta_i^{\frac{\alpha-3}{2}}\right]\int_0^\infty s^{\frac{\alpha}{2}r-1}\left[l(\frac{1}{\sqrt{s}})\right]^r\varphi^{n-r}(s)ds\\
	\lesssim&\int_0^{\infty}\left(\frac{t}{a_n}\right)^{\frac{\alpha}{2}r-1}\left[l(\sqrt{a_n})\right]^r\varphi^{n-r}(\frac{t}{a_n})\frac{1}{a_n}dt\lesssim n^{-r}.
\end{align*}

Secondly, we consider the case $2<\alpha\leq 3$. For $i\in I_1$, we choose $\beta\in(2,\alpha)$ arbitrarily. Since $\E|X_i|^\beta<\infty$ and 
\begin{align}
	f(x)=x^{3-\beta}e^{-s\theta x^2}\leq \left(\frac{3-\beta}{2s\theta}\right)^{\frac{3-\beta}{2}}e^{-\frac{3-\beta}{2}},\label{form:upper_bound_trick}
\end{align}
we have 
\begin{align*}
	\left|\E \left(X_ie^{-sX_i^2}\right)\right|=&s\left|\E \left(X_i^3e^{-s\theta_i X_i^2}\right)\right|\leq s\E \left(|X_i|^\beta\cdot |X_i|^{3-\beta}e^{-s\theta_i X_i^2}\right)\lesssim \E\theta_i^{\frac{\beta-3}{2}}s^{\frac{\beta-1}{2}}\lesssim s^{\frac{\beta-1}{2}}.
\end{align*}
For $i\in I_2$,
\begin{align*}
	\left|\E \left(X_i^2e^{-sX_i^2}\right)\right|\lesssim\E|X_i|^2\lesssim 1.
\end{align*}
For $i\in I_3$,
\begin{align*}
	\left|\E \left(X_i^{k_i}e^{-sX_i^2}\right)\right|\leq\E\left(|X_i|^\beta\cdot|X_i|^{k_i-\beta}e^{-sX_i^2}\right)\lesssim s^{-\frac{k_i-\beta}{2}}.
\end{align*}
So we have 
\begin{align*}
	\left|\E \left(Y_1^{k_1}\cdots Y_r^{k_r}\right)\right|\lesssim&\int_0^\infty s^{\frac{k}{2}-1}\cdot s^{\frac{\beta-1}{2}|I_1|}\prod_{i\in I_3}s^{-\frac{k_i-\beta}{2}}\varphi^{n-r}(s)ds\\
	=&\int_0^\infty s^{\frac{\beta}{2}(r-|I_2|)+|I_2|-1}\varphi^{n-r}(s)ds\\
	=&\int_0^\infty (\frac{t}{n})^{\frac{\beta}{2}(r-|I_2|)+|I_2|-1}\varphi^{n-r}(\frac{t}{n})\frac{1}{n}dt\\
	\lesssim& n^{-\frac{\beta}{2}(r-|I_2|)-|I_2|}.
\end{align*}
Finally, we deal with the case $\alpha>3$. For convenience, we denote $H=\cup_{2\leq j\leq\lceil\alpha\rceil-1} I_j$ and $\Lambda=\cup_{j\geq\lceil\alpha\rceil}I_j$. For $i\in I_1$,
\begin{align*}
	\left|\E \left(X_ie^{-sX_i^2}\right)\right|=s\left|\E \left(X_i^3e^{-s\theta_i X_i^2}\right)\right|\lesssim s\E|X_i|^3\lesssim s.
\end{align*}
For $i\in H$, 
\begin{align*}
	\left|\E \left(X_i^{k_i}e^{-sX_i^2}\right)\right|\lesssim\E|X_i|^{k_i}\lesssim 1.
\end{align*}
For $i\in \Lambda$, with similar process in \eqref{form:upper_bound_trick},
\begin{align*}
	\left|\E \left(X_i^{k_i}e^{-sX_i^2}\right)\right|\leq\E\left(|X_i|^\beta\cdot|X_i|^{k_i-\beta}e^{-sX_i^2}\right)\lesssim s^{-\frac{k_i-\beta}{2}}.
\end{align*}
So we have 
\begin{align*}
	\left|\E \left(Y_1^{k_1}\cdots Y_r^{k_r}\right)\right|\lesssim&\int_0^\infty s^{\frac{k}{2}-1}\cdot s^{|I_1|}\prod_{i\in \Lambda}s^{-\frac{k_i-\beta}{2}}\varphi^{n-r}(s)ds\\
	=&\int_0^\infty s^{|I_1|+\frac{1}{2}\sum_{i\in H}k_i+\frac{\beta}{2}|\Lambda|-1}\varphi^{n-r}(s)ds\\
	=&\int_0^\infty (\frac{t}{n})^{|I_1|+\frac{1}{2}\sum_{i\in H}k_i+\frac{\beta}{2}|\Lambda|-1}\varphi^{n-r}(\frac{t}{n})\frac{1}{n}dt\\
	\lesssim& n^{-|I_1|-\frac{1}{2}\sum_{i\in H}k_i-\frac{\beta}{2}|\Lambda|}.
\end{align*}
\end{proof}

\section{Proofs for symmetric distributions}
The proofs are mainly based on the asymptotic results of the quadratic forms $(\Y_1\trans\Y_2)^2$. Recall the definition 
\begin{align*}
\X_i=\begin{pmatrix}
	X_{i1}\\
	\vdots\\
	X_{in}
\end{pmatrix}, \quad i=1,\ldots,p,
\end{align*}
and 
\begin{align*}
\Y_i=\frac{\X_i}{\|\X_i\|}, i=1,\ldots,p,
\end{align*}
where $\{X_{ij},i=1,\ldots,p,j=1,\ldots,n\}$ are i.i.d. from the population $X$. 
\begin{proposition} \label{all_moments_symmetric}
Assuming $X \ind -X$, we have
\begin{align*}
	\E(\Y_1\trans \Y_2)^2=\frac{1}{n}, \quad  \cov\left((\Y_1\trans \Y_2)^2,(\Y_1\trans \Y_3)^2\right)=0,
\end{align*}
and
\begin{align*}
	\var\left((\Y_1\trans \Y_2)^2\right) =\frac{n(n+2)}{n-1} \left(\E Y_1^4\right)^2-\frac{6}{n-1}\E Y_1^4+\frac{2n+1}{n^2(n-1)}.
\end{align*}  
\end{proposition}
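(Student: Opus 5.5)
Throughout, the plan uses two facts. First, $\Y_1,\Y_2,\Y_3$ are independent, each lying on the unit sphere. Second, under $X\ind -X$ each $\Y_i$ has exchangeable coordinates, and its law is invariant under coordinatewise sign changes $Y_{ik}\mapsto \epsilon_k Y_{ik}$. The sign invariance follows because the $X_{ik}$ are i.i.d. symmetric, so flipping the sign of one $X_{ik}$ leaves $\|\X_i\|$ unchanged. Consequently any mixed moment of $\Y_i$ with an odd power on some coordinate vanishes. The only nontrivial single-vector moments are then $\E Y_{11}^2=1/n$ (from $\sum_k Y_{1k}^2=1$), $\E Y_{11}^4=:m$, and $\E Y_{11}^2Y_{12}^2=(1-nm)/(n(n-1))$. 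The last identity comes from taking expectations in $1=(\sum_k Y_{1k}^2)^2=\sum_k Y_{1k}^4+\sum_{k\neq l}Y_{1k}^2Y_{1l}^2$.

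For the first moment, expand $(\Y_1\trans\Y_2)^2=\sum_{k,l}Y_{1k}Y_{1l}Y_{2k}Y_{2l}$ and condition on $\Y_1$. Sign symmetry of $\Y_2$ kills the terms with $k\neq l$, and exchangeability gives $\E(Y_{2k}^2)=1/n$. Hence $\E[(\Y_1\trans\Y_2)^2\mid \Y_1]=\frac1n\sum_k Y_{1k}^2=\frac1n$ deterministically. This gives $\E(\Y_1\trans\Y_2)^2=1/n$ directly. It also gives the covariance claim: conditional on $\Y_1$, the variables $(\Y_1\trans\Y_2)^2$ and $(\Y_1\trans\Y_3)^2$ are independent, each with conditional mean $1/n$. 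Therefore $\E[(\Y_1\trans\Y_2)^2(\Y_1\trans\Y_3)^2]=\E[\frac1n\cdot\frac1n]=1/n^2$, and the covariance is $0$.

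For the variance, compute $\E(\Y_1\trans\Y_2)^4=\sum_{k,l,u,v}\E(Y_{1k}Y_{1l}Y_{1u}Y_{1v})\,\E(Y_{2k}Y_{2l}Y_{2u}Y_{2v})$ by independence. By sign symmetry, only index patterns in which every coordinate appears an even number of times survive. These are the $n$ terms with all four indices equal, each contributing $m^2$, and the pairings $\{k=l\neq u=v\}$ and its two analogues, giving $3n(n-1)$ terms, each equal to $\bigl(\E Y_{11}^2Y_{12}^2\bigr)^2$. Hence
\begin{align*}
\E(\Y_1\trans\Y_2)^4=nm^2+\frac{3(1-nm)^2}{n(n-1)}.
\end{align*}
Subtracting $1/n^2$ and simplifying yields the claimed expression. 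The simplification is routine algebra: the $m^2$ coefficient is $n+\frac{3n}{n-1}=\frac{n(n+2)}{n-1}$, the linear term is $-\frac{6}{n-1}m$, and the constant is $\frac{3}{n(n-1)}-\frac1{n^2}=\frac{2n+1}{n^2(n-1)}$.

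The only step needing care is justifying sign invariance and exchangeability of $\Y_i$ without any moment assumptions. All quantities are bounded by $1$, so every expectation exists regardless of $\alpha$, and the invariance is exact at the level of distributions. I expect no real obstacle beyond correctly enumerating the surviving index patterns in the fourth moment.
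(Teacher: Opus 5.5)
Your proposal is correct and follows essentially the paper's route. The paper likewise uses $\E\Y\Y\trans=\frac1n\bI_n$ (your conditional-mean computation given $\Y_1$) for both the mean and the vanishing covariance. Its covariance expansion of $\var\bigl((\Y_1\trans\Y_2)^2\bigr)$ into the $n$ diagonal terms and the $3n(n-1)$ paired terms, closed with $1=n\E Y_1^4+n(n-1)\E Y_1^2Y_2^2$, is the same bookkeeping as your direct count for $\E(\Y_1\trans\Y_2)^4$, and your final algebra checks out.
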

\begin{proof}
Noting 
\begin{align}
	\cov(\Y)=&\E \Y\Y\trans=\frac{1}{n}\bI_n,\label{form:cov_Y}
\end{align}
we can get 
\begin{align*}
	\E(\Y_1\trans \Y_2)^2=&\E \tr(\Y_1\Y_1\trans\Y_2\Y_2\trans)=\tr\left( \frac{1}{n^2}\bI_n\right)=\frac{1}{n}.
\end{align*}   
For the covariance, we have
\begin{align*}
	\cov\left((\Y_1\trans \Y_2)^2,(\Y_1\trans \Y_3)^2\right)=\var\left( \Y_1\trans\left(\frac{1}{n}\bI_n\right)\Y_1 \right)=0,
\end{align*}
and 
\begin{align*}
	&\var \left(\left(\Y_1 \trans\Y_2 \right)^2 \right)=\cov\left(\sum_{i,j=1}^n Y_{1i}Y_{1j}Y_{2i}Y_{2j},\sum_{{i'},{j'}=1}^n Y_{1{i'}}Y_{1{j'}}Y_{2{i'}}Y_{2{j'}}  \right)\\
	=&n \cov\left(Y^2_{11} Y^2_{21}, \sum_{i,j=1}^n Y_{1i}Y_{1j}Y_{2i}Y_{2j}\right)+n(n-1)\cov\left(Y_{11}Y_{12} Y_{21}Y_{22}, \sum_{i,j=1}^n Y_{1i}Y_{1j}Y_{2i}Y_{2j}\right)\\
	=& n \var \left(Y^2_{11} Y^2_{21}\right)+n(n-1)  \cov\left(Y^2_{11} Y^2_{21},Y^2_{12}Y^2_{22}\right) +2n(n-1)\var\left(Y_{11}Y_{12} Y_{21}Y_{22}\right)\\
	=&n \left(\E Y_1^4\right)^2-\frac{1}{n^3}+n(n-1)\left(\E Y_1^2 Y_2^2\right)^2-\frac{n-1}{n^3}+2n(n-1)\left(\E Y_1^2 Y_2^2\right)^2\\
	=&\frac{n(n+2)}{n-1} \left(\E Y_1^4\right)^2-\frac{6}{n-1}\E Y_1^4+\frac{2n+1}{n^2(n-1)},
\end{align*}
where the last equality follows from the fact that
\begin{align}
	1=\E (Y_1^2+\cdots+Y_n^2)^2=n \E Y_1^4+n(n-1)\E Y_1^2 Y_2^2.\label{form:identity_for_one}
\end{align} 
\end{proof}

\begin{proposition} \label{quadratic_form_symmetric}
Assuming $X \ind -X$, for any deterministic and symmetric matrices $\A=(a_{ij})_{n\times n}$ and $\B=(b_{ij})_{n\times n}$, we have
\begin{align*}
	&\cov\left(\Y\trans \A \Y, \Y\trans \B \Y\right)\\
	=&\frac{n(n+2)\E Y_1^4-3}{n(n-1)}\tr\left(\A \circ \B\right)+\frac{1-n^2 \E Y_1^4}{n^2(n-1)}\tr(\A)\tr(\B)+\frac{2-2n \E Y_1^4}{n(n-1)} \tr(\A\trans \B).
\end{align*}
\end{proposition}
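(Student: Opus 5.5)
Expand both quadratic forms in coordinates and use the moment structure of the self-normalized vector $\Y=(Y_1,\ldots,Y_n)\trans$ (i.e.\ $\Y_1$ of the paper, with $\Y\trans\Y=1$). We write
\begin{align*}
\cov\left(\Y\trans\A\Y,\Y\trans\B\Y\right)=\sum_{i,j,k,l=1}^n a_{ij}b_{kl}\,\E\left(Y_iY_jY_kY_l\right)-\E\left(\Y\trans\A\Y\right)\E\left(\Y\trans\B\Y\right).
\end{align*}
By \eqref{form:cov_Y}, $\E\Y\trans\A\Y=\tr(\A)/n$, so the subtracted term is $\tr(\A)\tr(\B)/n^2$.

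For the fourth moments, symmetry $X\ind -X$ gives $(Y_1,\ldots,Y_n)\ind(\epsilon_1Y_1,\ldots,\epsilon_nY_n)$ for any fixed signs $\epsilon_i$. Hence $\E(Y_iY_jY_kY_l)$ vanishes unless the indices pair up. By exchangeability, only two distinct values survive: $\E Y_1^4$ when $i=j=k=l$, and $\E Y_1^2Y_2^2$ for the three pairings $\{i=j\neq k=l\}$, $\{i=k\neq j=l\}$, $\{i=l\neq j=k\}$. The identity \eqref{form:identity_for_one} then gives
\[
\E Y_1^2Y_2^2=\frac{1-n\E Y_1^4}{n(n-1)}.
\]
Collecting terms,
\begin{align*}
\sum a_{ij}b_{kl}\E(Y_iY_jY_kY_l)
=\E Y_1^4\sum_i a_{ii}b_{ii}
+\E Y_1^2Y_2^2\Big[\sum_{i\neq k}a_{ii}b_{kk}+2\sum_{i\neq j}a_{ij}b_{ij}\Big].
\end{align*}
Here symmetry of $\A$ and $\B$ merges the last two pairings into $2\sum_{i\ne j}a_{ij}b_{ij}$. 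We then rewrite the restricted sums in trace form:
\[
\sum_{i\ne k}a_{ii}b_{kk}=\tr(\A)\tr(\B)-\tr(\A\circ\B),\qquad \sum_{i\ne j}a_{ij}b_{ij}=\tr(\A\trans\B)-\tr(\A\circ\B),
\]
where $\tr(\A\circ\B)=\sum_i a_{ii}b_{ii}$ in the paper's notation.

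Finally, substitute the expression for $\E Y_1^2Y_2^2$ and collect the coefficients of $\tr(\A\circ\B)$, $\tr(\A)\tr(\B)$ and $\tr(\A\trans\B)$:
\begin{itemize}
\item $\tr(\A\circ\B)$: $\E Y_1^4-3\,\frac{1-n\E Y_1^4}{n(n-1)}=\frac{n(n+2)\E Y_1^4-3}{n(n-1)}$;
\item $\tr(\A)\tr(\B)$: $\frac{1-n\E Y_1^4}{n(n-1)}-\frac1{n^2}=\frac{1-n^2\E Y_1^4}{n^2(n-1)}$;
\item $\tr(\A\trans\B)$: $\frac{2(1-n\E Y_1^4)}{n(n-1)}$.
\end{itemize}
These match the claim. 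The argument is entirely algebraic. The only point needing care is the sign-symmetry argument that kills odd index patterns, in particular patterns like $i=j=k\neq l$, which require $\E(Y_1^3Y_2)=0$; this follows from flipping the sign of $X_2$, which leaves the norm unchanged. Beyond that, the main obstacle is purely bookkeeping: avoiding double counting of the diagonal in the three pairings, which the inclusion--exclusion via $\tr(\A\circ\B)$ handles.
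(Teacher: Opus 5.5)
Your proposal is correct and takes essentially the same approach as the paper. Both expand in coordinates, use sign symmetry and exchangeability to reduce the fourth moments to $\E Y_1^4$ and $\E Y_1^2Y_2^2$, rewrite the restricted sums in terms of $\tr(\A\circ\B)$, $\tr(\A)\tr(\B)$ and $\tr(\A\trans\B)$, and eliminate $\E Y_1^2Y_2^2$ via \eqref{form:identity_for_one}; the only difference is bookkeeping, since the paper works with $\var(Y_1^2)$, $\cov(Y_1^2,Y_2^2)$ and $\var(Y_1Y_2)$ rather than raw moments minus the product of means.
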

\begin{proof}
A direct calculation gives
\begin{align*}
	&\cov\left(\Y\trans \A \Y, \Y\trans \B \Y\right)=\cov\left(\sum_{i,j=1}^n a_{ij}Y_{i}Y_{j},\sum_{{i'},{j'}=1}^n b_{i'j'} Y_{i'}Y_{j'}  \right)\\
	=&\cov\left(\sum_{i=1}^n a_{ii}Y^2_{i},\sum_{j=1}^n b_{jj}Y^2_j \right)+4\sum_{i<j}a_{ij}b_{ij}\var(Y_iY_j)\\
	=& \var(Y_1^2)\sum_{i=1}^n a_{ii}b_{ii}+\cov\left(Y_1^2,Y_2^2\right)\sum_{i \neq j}a_{ii}b_{jj}+4\var(Y_1Y_2)\sum_{i<j}a_{ij}b_{ij}\\
	=& \left(\var(Y_1^2)-\cov\left(Y_1^2,Y_2^2\right)-2\var(Y_1Y_2)\right)\sum_{i=1}^n a_{ii}b_{ii}+\cov\left(Y_1^2,Y_2^2\right)\sum_{i,j}a_{ii}b_{jj}\\
	&+2\var(Y_1Y_2)\sum_{i,j}a_{ij}b_{ji}\\
	=&\left(\E Y_1^4-3\E Y_1^2Y_2^2\right)\tr\left(\A \circ \B\right)+\left(\E Y_1^2Y_2^2-\frac{1}{n^2}\right)\tr(\A)\tr(\B)+2\E Y_1^2Y_2^2 \tr(\A\trans \B).
\end{align*}
This, together with \eqref{form:identity_for_one}, completes the proof.
\end{proof}

\begin{proof}[Proof of Proposition \ref{thm:expectation_and_variance}]
By Proposition \ref{all_moments_symmetric}, we have
\begin{align*}
	\E \tr(\R^2)=p+p(p-1)\E\left(\Y_1\trans\Y_2\right)^2=p+\frac{p(p-1)}{n}
\end{align*}
and
\begin{align*}
	&\var(\tr(\R^2))=\cov \left(\sum_{i \neq j}\left(\Y_i\trans\Y_j\right)^2, \sum_{{i'} \neq {j'}}\left(\Y_{i'}\trans\Y_{j'}\right)^2 \right)\\
	=& p(p-1) \cov \left(\left(\Y_1 \trans\Y_2 \right)^2,  \sum_{i \neq j}\left(\Y_i\trans\Y_j\right)^2 \right)\\
	=& 2 p(p-1) \var \left(\left(\Y_1 \trans\Y_2 \right)^2 \right)+4 p(p-1)(p-2) \cov \left(\left(\Y_1 \trans\Y_2 \right)^2,  \left(\Y_1\trans\Y_3\right)^2 \right)\\
	=&2p(p-1) \left[\frac{n(n+2)}{n-1} \left(\E Y_{11}^4\right)^2-\frac{6}{n-1}\E Y_{11}^4+\frac{2n+1}{n^2(n-1)} \right].
\end{align*}
\end{proof}

\begin{proof}[Proof of Theorem \ref{thm:CLT}]     
Let $V_n=\var(T)$. Define the martingale difference as follows
\begin{align*}
	M_{n,k}=&\frac{1}{\sqrt{V_n}}\left(\E[T|\Y_1,\cdots,\Y_k]-\E[T|\Y_1,\cdots,\Y_{k-1}]\right)\\
	=&\frac{2}{\sqrt{V_n}}\tr \left(  \sum_{i<k} \Y_i\Y_i\trans+\frac{p-k}{n}\bI_n\right)\left(  \Y_k \Y_k\trans-\frac{1}{n}\bI_n\right)\\
	=&\frac{2}{\sqrt{V_n}}\left(\sum_{i<k}\left(\Y_i\trans\Y_k\right)^2-\frac{k-1}{n}\right)\\
	=&\frac{2}{\sqrt{V_n}} \sum_{i<k} \Y_k \trans  \left(\Y_i \Y_i \trans-\frac{1}{n}\bI_n\right)\Y_k,
\end{align*}
where we use the facts that $\Y_i\trans\Y_i=1$, \eqref{form:cov_Y} and that
\begin{align*}
	T=&\sum_{i,j=1}^p \left(\Y_i\trans\Y_j\right)^2=\tr \left(  \sum_{i=1}^p \Y_i\Y_i\trans \right)\left(  \sum_{j=1}^p \Y_j\Y_j\trans \right)\\
	=&\tr \left(  \sum_{i<k} \Y_i\Y_i\trans+\Y_k \Y_k\trans+\sum_{i>k}\Y_i\Y_i\trans \right)\left(  \sum_{j<k} \Y_j\Y_j\trans+\Y_k \Y_k\trans+\sum_{j>k}\Y_j\Y_j\trans \right)\\
	=&\tr \left(  \sum_{i \neq k} \Y_i\Y_i\trans \right)\left(  \sum_{j \neq k} \Y_j\Y_j\trans \right)+1+2 \tr \left(  \sum_{i<k} \Y_i\Y_i\trans+\sum_{i>k}\Y_i\Y_i\trans \right)\left(  \Y_k \Y_k\trans\right).
\end{align*}
To apply the martingale central limit theorem, we need to verify that
\begin{itemize}
	\item[(i).] $\sum_{k=1}^p\E M_{n,k}^2\to1$.
	\item[(ii).] $\var \left( \sum_{k=1}^p\E[M_{n,k}^2|\Y_1,\cdots,\Y_{k-1}]\right)\to 0$.
	\item[(iii).] $\sum_{k=1}^p\E M_{n,k}^4\to 0$.
\end{itemize}

First, (i) is obvious since 
\begin{align*}
	\sum_{k=1}^p \E M_{n,k}^2= \sum_{k=1}^p \var\left(M_{n,k}\right)=\var\left(\sum_{k=1}^p M_{n,k} \right)=\frac{\var(T)}{V_n}=1. 
\end{align*}

For (ii), note that
\begin{align*}
	M_{n,k}= \frac{2}{\sqrt{V_n}} \Y_k \trans \left( \sum_{i=1}^{k-1}   \left(\Y_i \Y_i \trans-\frac{1}{n}\bI_n\right) \right) \Y_k,
\end{align*}
and that
\begin{align*}
	\E[M_{n,k}|\Y_1,\cdots,\Y_{k-1}]=\frac{2}{n \sqrt{V_n}}  \tr\left( \sum_{i=1}^{k-1}   \left(\Y_i \Y_i \trans-\frac{1}{n}\bI_n\right) \right)=0.
\end{align*}
Proposition \ref{quadratic_form_symmetric} implies that
\begin{align*}
	&\frac{V_n}{4}\E[M_{n,k}^2|\Y_1,\cdots,\Y_{k-1}]\\
	=&\left(\E Y_1^4-3\E \left(Y_1^2Y_2^2\right)\right)\sum_{i,j=1}^{k-1}\tr \left(  \left(\Y_i \Y_i \trans-\frac{1}{n}\bI_n\right)  \circ \left(\Y_j \Y_j \trans-\frac{1}{n}\bI_n\right) \right)\\
	&+2\E \left(Y_1^2Y_2^2\right)\sum_{i,j=1}^{k-1}\tr \left(  \left(\Y_i \Y_i \trans-\frac{1}{n}\bI_n\right) \left(\Y_j \Y_j \trans-\frac{1}{n}\bI_n\right) \right) \\
	=&\left(\E Y_1^4-3\E \left(Y_1^2Y_2^2\right)\right)\sum_{i,j=1}^{k-1} \left(\left\|\Y_i \circ \Y_j\right\|^2_2-\frac{1}{n}\right)+2\E \left(Y_1^2Y_2^2\right)\sum_{i,j=1}^{k-1} \left(\left( \Y_i \trans \Y_j\right)^2-\frac{1}{n}\right).
\end{align*}
It follows that
\begin{align}
	&\sum_{k=1}^p\E[M_{n,k}^2|\Y_1,\cdots,\Y_{k-1}]=\frac{4\left(\E Y_1^4-3\E \left(Y_1^2Y_2^2\right)\right)}{V_n}\sum_{i=1}^{p}\left(p-i\right) \left(\left\|\Y_i \circ \Y_i\right\|^2_2-\frac{1}{n}\right)\notag\\
	&+\frac{8\left(\E Y_1^4-3\E \left(Y_1^2Y_2^2\right)\right)}{V_n}\sum_{i<j}^{p}\left(p-j\right) \left(\left\|\Y_i \circ \Y_j\right\|^2_2-\frac{1}{n}\right)\notag\\
	&+\frac{16 \E \left(Y_1^2Y_2^2\right)}{V_n}\sum_{i<j} \left(p-j\right) \left(\left( \Y_i \trans \Y_j\right)^2-\frac{1}{n}\right)+\frac{4 \E \left(Y_1^2Y_2^2\right)}{V_n}p(p-1)\left(1-\frac{1}{n}\right).\label{form:sum_of_martingale_difference}
\end{align}
It is  straightforward to verify that 
\begin{gather*}
	\cov\left(\left\|\Y_1 \circ \Y_2\right\|^2_2,\left\|\Y_1 \circ \Y_3\right\|^2_2\right)=0, \quad \cov\left(\left( \Y_1 \trans \Y_2\right)^2,\left( \Y_1 \trans \Y_3\right)^2\right)=0.
\end{gather*}
This ensures that
\begin{align}
	\var\left( \sum_{i=1}^{p}\left(p-i\right) \left(\left\|\Y_i \circ \Y_i\right\|^2_2 \right) \right)=&\var\left(\left\|\Y_1 \circ \Y_1\right\|^2_2  \right) \sum_{i=1}^{p-1} i^2\notag\\
	=&\var\left(\left\|\Y_1 \circ \Y_1\right\|^2_2  \right) \frac{p(p-1)(2p-1)}{6},\notag\\
	\var\left( \sum_{i<j}^{p}\left(p-j\right) \left(\left\|\Y_i \circ \Y_j\right\|^2_2-\frac{1}{n}\right) \right)=&\var \left(\left\|\Y_1 \circ \Y_2\right\|^2_2 \right) \sum_{i<j}\left(p-j\right)^2 \notag\\
	\leq & \frac{p^2(p-1)^2}{3}\var \left(\left\|\Y_1 \circ \Y_2\right\|^2_2 \right),\label{form:var_bound_for_hadamard}\\
	\var\left( \sum_{i<j} \left(p-j\right) \left(\left( \Y_i \trans \Y_j\right)^2-\frac{1}{n}\right) \right)=&\var \left(\left( \Y_1 \trans \Y_2\right)^2\right) \sum_{i<j}\left(p-j\right)^2  \notag\\
	\leq & \frac{p^2(p-1)^2}{3} \var \left(\left( \Y_1 \trans \Y_2\right)^2 \right).\label{form:var_bound_for_matrixProduct}
\end{align}
Furthermore, note that
\begin{align*}
	&  \var\left(\left\|\Y_1 \circ \Y_1\right\|^2_2  \right)=n \var\left(Y^4_{11} \right)+n(n-1) \cov\left(Y^4_{11},Y^4_{12}\right)\\
	&= n\E Y_1^8+n(n-1)\E Y_1^4Y_2^4-n^2\left(\E Y_1^4\right)^2,\\
	&  \var\left(\left\|\Y_1 \circ \Y_2\right\|^2_2  \right)=\cov\left(\sum_{i=1}^n Y^2_{1i}Y^2_{2i},\sum_{j=1}^n Y^2_{1j}Y^2_{2j}  \right)=n \cov\left(Y^2_{11} Y^2_{21}, \sum_{j=1}^n Y^2_{1j}Y^2_{2j}\right)\\
	&= n \var \left(Y^2_{11} Y^2_{21}\right)+n(n-1)  \cov\left(Y^2_{11} Y^2_{21},Y^2_{12}Y^2_{22}\right)\leq \var \left(\left( \Y_1 \trans \Y_2\right)^2 \right).
\end{align*}
A direct calculation shows that
\begin{align}
	V_n=2 p(p-1) \var \left(\left(\Y_1 \trans\Y_2 \right)^2 \right)=&2p(p-1) \left[\frac{n(n+2)}{n-1} \left(\E Y^4\right)^2-\frac{6}{n-1}\E Y^4+\frac{2n+1}{n^2(n-1)} \right]\notag\\
	=&2 p(p-1) \left(n \left(\E Y^4\right)^2-\frac{6}{n}\E Y^4+\frac{2}{n^2}\right)(1+o(1)).\label{form:V_n_expression}
\end{align}
We conclude from \eqref{form:var_bound_for_hadamard}, \eqref{form:var_bound_for_matrixProduct} and \eqref{form:V_n_expression} that
\begin{gather*}
	\frac{\left(\E Y^4\right)^2}{V_n^2}  \var\left( \sum_{i=1}^{p}\left(p-i\right) \left(\left\|\Y_i \circ \Y_i\right\|^2_2 \right) \right)\leq \frac{\left(\E Y_1^4\right)^2\left(n\E Y_1^8+n(n-1)\E Y_1^4Y_2^4\right)}{12(p-1) \left(\var \left(\left(\Y_1 \trans\Y_2 \right)^2 \right)\right)^2},\\
	\frac{\left(\E Y^4\right)^2}{V_n^2}\var\left( \sum_{i<j}^{p}\left(p-j\right) \left(\left\|\Y_i \circ \Y_j\right\|^2_2-\frac{1}{n}\right) \right) \leq \frac{\left(\E Y_1^4\right)^2}{12 \var \left(\left(\Y_1 \trans\Y_2 \right)^2 \right)}\to 0,\\
	\frac{\left(\E Y_1^2 Y_2^2\right)^2}{V_n^2}\var\left( \sum_{i<j} \left(p-j\right) \left(\left( \Y_i \trans \Y_j\right)^2-\frac{1}{n}\right) \right)\leq \frac{\left(\E Y_1^4\right)^2}{12 \var \left(\left(\Y_1 \trans\Y_2 \right)^2 \right)}\to 0.
\end{gather*}
This, together with \eqref{form:sum_of_martingale_difference}, implies that
\begin{align} \label{condition1}
	\var \left( \sum_{k=1}^p\E[M_{n,k}^2|\Y_1,\cdots,\Y_{k-1}]\right)\leq C \frac{\left(\E Y_1^4\right)^2\left(n\E Y_1^8+n(n-1)\E Y_1^4Y_2^4\right)}{p \left(n \left(\E Y^4\right)^2+\frac{2}{n^2}\right)^2}+o(1).
\end{align}

Finally, we verify Lindeberg's condition that 
\begin{align*}
	\sum_{k=1}^p\E M_{n,k}^4 \to 0.
\end{align*}
Since
\begin{align*}
	M_{n,k}= \frac{2}{\sqrt{V_n}} \Y_k \trans \left( \sum_{i=1}^{k-1}   \left(\Y_i \Y_i \trans-\frac{1}{n}\bI_n\right) \right) \Y_k=\frac{2}{\sqrt{V_n}} \sum_{i=1}^{k-1} \Y_i \trans  \left(\Y_k \Y_k \trans-\frac{1}{n}\bI_n\right)\Y_i,
\end{align*}
we have
\begin{align*}
	M_{n,k}^4=&\frac{16}{V_n^2} \sum_{i_1,i_2,i_3,i_4=1}^{k-1} \left(\Y_{i_1} \trans \A_k \Y_{i_1}\right) \left(\Y_{i_2} \trans \A_k \Y_{i_2}\right)  \left(\Y_{i_3} \trans \A_k \Y_{i_3}\right) \left( \Y_{i_4} \trans \A_k \Y_{i_4}\right),
\end{align*}
where $\A_k=\Y_k \Y_k \trans-\frac{1}{n}\bI_n$. Noting
\begin{align*}
	\E \left( \Y_i \trans  \A_k \Y_i \mid \Y_k \right)=\frac{1}{n}\tr(\A_k)=0,~i=1,\cdots,k-1,
\end{align*}
thus we can get 
\begin{align}
	\E M_{n,k}^4=& \frac{16}{V_n^2} \sum_{i=1}^{k-1} \E \left(\Y_{i} \trans \A_k \Y_{i}\right)^4+ \frac{48}{V_n^2} \sum_{i \neq j}^{k-1} \E \left(\Y_{i} \trans \A_k \Y_{i}\right)^2\left(\Y_{j} \trans \A_k \Y_{j}\right)^2 \notag\\
	=& \frac{16(k-1)}{V_n^2} \E \left(\Y_{1} \trans \A_2 \Y_{1}\right)^4+ \frac{48(k-1)(k-2)}{V_n^2} \E \left(\Y_{1} \trans \A_3 \Y_{1}\right)^2\left(\Y_{2} \trans \A_3 \Y_{2}\right)^2. \label{form:E_M^4}
\end{align}
For the first term above, we have
\begin{align*}  
	\E \left(\Y_{1} \trans \A_2 \Y_{1}\right)^4=\E \left( \left(\Y_1\trans \Y_2 \right)^2-\frac{1}{n}  \right)^4\leq 8 \E \left(\left(\Y_1\trans \Y_2 \right)^8+\frac{1}{n^4}\right),
\end{align*}
and 
\begin{align*}
	\E\left(\Y_1\trans\Y_2\right)^8=\E \left(\sum_{i=1}^n Y_{1i}Y_{2i}\right)^8=\sum_{i_1,\cdots,i_8}\E \left(Y_{1i_1}\cdots Y_{1i_8}\right)\E \left(Y_{2i_1}\cdots Y_{2i_8}\right)=\sum_{i_1,\cdots,i_8} \left(\E \left(Y_{i_1}\cdots Y_{i_8}\right)\right)^2.
\end{align*}
By the symmetry of $\{Y_i\}_{i=1}^n$, we can get
\begin{align}
	\E\left(\Y_1\trans\Y_2\right)^8=&\sum_{|\{i_1\}\cup\cdots \cup \{i_8\}|=1} \left(\E \left(Y_{i_1}\cdots Y_{i_8}\right)\right)^2+ \sum_{|\{i_1\}\cup\cdots \cup \{i_8\}|=2} \left(\E \left(Y_{i_1}\cdots Y_{i_8}\right)\right)^2\notag\\
	&+\sum_{|\{i_1\}\cup\cdots \cup \{i_8\}|=3} \left(\E \left(Y_{i_1}\cdots Y_{i_8}\right)\right)^2+\sum_{|\{i_1\}\cup\cdots \cup \{i_8\}|\geq 4} \left(\E \left(Y_{i_1}\cdots Y_{i_8}\right)\right)^2 \notag\\
	=&n\left(\E Y_{1}^8\right)^2+n(n-1) \sum_{k_1+k_2=4,k_1,k_2\geq 1} \frac{8!}{(2k_1)!(2k_2)!}\left(\E \left(Y_{1}^{2k_1}Y_{2}^{2k_2}\right)\right)^2\notag\\
	&+n(n-1)(n-2) \sum_{k_1+k_2+k_3=4,k_1,k_2,k_3\geq 1} \frac{8!}{(2k_1)!(2k_2)!(2k_3)!}\left(\E \left(Y_{1}^{2k_1}Y_{2}^{2k_2}Y_{3}^{2k_3}\right)\right)^2+O(n^{-4})\notag\\
	\leq & C \left( n\left(\E Y_{1}^8\right)^2+n^2 \left(\E \left(Y_{1}^6Y_2^2\right)\right)^2+n^2 \left(\E \left(Y_{1}^4 Y_2^4\right)\right)^2+n^3 \left(\E \left(Y_{1}^2 Y_2^2 Y_3^4 \right)\right)^2+n^{-4}\right).\label{form:E8}
\end{align}
For the second term of \eqref{form:E_M^4}, by Proposition \ref{quadratic_form_symmetric} and Proposition \ref{prop:even_power},
\begin{align}
	\E\left(\Y_1\trans\A_3\Y_1\right)^2\left(\Y_2\trans\A_3\Y_2\right)^2=&\E\left[\frac{n(n+2)\E Y^4-3}{n(n-1)}\tr\left(\A_3 \circ \A_3 \right)+\frac{2-2n \E Y^4}{n(n-1)} \tr(\A_3 \trans \A_3)\right]^2\nonumber \\
	=&\E\left[\frac{n(n+2)\E Y^4-3}{n(n-1)}\sum_{i=1}^n\left(Y_{i}^2-\frac{1}{n}\right)^2+\frac{2-2n \E Y^4}{n^2}\right]^2 \nonumber \\
	\leq & C \left(\E Y^4\right)^2 \left( n\E Y_1^8+n(n-1)\E Y_1^4Y_2^4\right)+\frac{C}{n^4},\label{form:E2_2}
\end{align}
where we use the trivial bound $ 1\leq n^2 \E Y^4 \leq n$ by \eqref{form:identity_for_one}. Combining \eqref{form:V_n_expression}, \eqref{form:E_M^4}, \eqref{form:E8} and \eqref{form:E2_2}, we can get
\begin{align}
	\sum_{k=1}^p \E M_{n,k}^4 \leq & C_1  \frac{n\left(\E Y_{1}^8\right)^2+n^2 \left(\E Y_{1}^6Y_2^2\right)^2+n^2 \left(\E Y_{1}^4 Y_2^4\right)^2+n^3 \left(\E Y_{1}^2 Y_2^2 Y_3^4 \right)^2}{p^2\left(n \left(\E Y^4\right)^2+\frac{2}{n^2}\right)^2}\nonumber  \\
	& + C_2  \frac{\left(\E Y^4\right)^2 \left( n\E Y_1^8+n(n-1)\E Y_1^4Y_2^4\right)}{p\left(n \left(\E Y^4\right)^2+\frac{2}{n^2}\right)^2}+\frac{C_3}{p}.  \label{condition2}
\end{align}

From \eqref{condition1} and \eqref{condition2}, we can conclude that (ii) and (iii) hold if 
\begin{align}
	&\frac{n\left(\E Y_{1}^8\right)^2+n^2 \left(\E Y_{1}^6Y_2^2\right)^2+n^2 \left(\E Y_{1}^4 Y_2^4\right)^2+n^3 \left(\E Y_{1}^2 Y_2^2 Y_3^4 \right)^2}{p^2\left(n \left(\E Y^4\right)^2+\frac{2}{n^2}\right)^2} \to  0; \label{clt1} \\
	& \frac{\left(\E Y^4\right)^2 \left( n\E Y_1^8+n(n-1)\E Y_1^4Y_2^4\right)}{p\left(n \left(\E Y^4\right)^2+\frac{2}{n^2}\right)^2} \to  0. \label{clt2}
\end{align}
By exploring the trivial identities,
\begin{align*}
	& \E Y_1^4=\E \left[Y_1^4\left(Y_1^2+\cdots+Y_n^2 \right)\right]=\E Y_1^6+(n-1) \E \left(Y_1^4Y_2^2\right);\\
	& \E Y_1^6=\E \left[Y_1^6\left(Y_1^2+\cdots+Y_n^2 \right)\right]=\E Y_1^8+(n-1) \E \left(Y_1^6Y_2^2\right);\\
	& \E \left(Y_1^4Y_2^2\right)=\E \left[Y_1^4Y_2^2\left(Y_1^2+\cdots+Y_n^2 \right)\right]=\E  \left(Y_1^6Y_2^2\right)+\E \left(Y_1^4Y_2^4\right)+(n-2) \E \left(Y_1^2Y_2^2Y_3^4\right);\\
	& 2\left(\E\left(Y_1^6Y_2^2\right)-\E\left(Y_1^4Y_2^4\right)\right)=\E \left[Y_1^2Y_2^2\left(Y_1^2-Y_2^2\right)^2\right],
\end{align*}
we can get the bounds 
\begin{align*}
	\E \left(Y_1^2Y_2^2Y_3^4\right)\leq \frac{1}{n-2} \E \left(Y_1^4Y_2^2\right)\leq \frac{1}{(n-1)(n-2)}  \E Y_1^4,\quad \E \left(Y_1^4Y_2^4\right) \leq \E \left(Y_1^6Y_2^2\right)\leq \frac{1}{n-1} \E Y_1^6,
\end{align*}
with which, the condition \eqref{clt1} is reduced to 
\begin{align*}
	\frac{n\left(\E Y_{1}^8\right)^2+ 2 \left(\E Y_{1}^6\right)^2+n^{-1} \left(\E Y_{1}^4 \right)^2}{p^2\left(n \left(\E Y^4\right)^2+\frac{2}{n^2}\right)^2} \leq \frac{\left(n \E Y_{1}^8+\E Y_{1}^4\right)^2}{n p^2\left(n \left(\E Y^4\right)^2+\frac{2}{n^2}\right)^2}=\left(\frac{n \E Y_{1}^8+\E Y_{1}^4}{\sqrt{n} p \left(n \left(\E Y^4\right)^2+\frac{2}{n^2}\right)} \right)^2.
\end{align*}
For condition \eqref{clt2},
\begin{align*}
	\frac{\left(\E Y^4\right)^2 \left( n\E Y_1^8+n(n-1)\E Y_1^4Y_2^4\right)}{p\left(n \left(\E Y^4\right)^2+\frac{2}{n^2}\right)^2} \leq & \frac{\E Y_1^8+(n-1)\E Y_1^4Y_2^4 }{p \left(n \left(\E Y^4\right)^2+\frac{2}{n^2}\right)} \leq \frac{\E Y_1^6 }{p \left(n \left(\E Y^4\right)^2+\frac{2}{n^2}\right)}\\
	\leq & \frac{\left(\E Y_1^4 n \E Y_1^8  \right)^{1/2} }{ \sqrt{n} p \left(n \left(\E Y^4\right)^2+\frac{2}{n^2}\right)} \leq  \frac{n \E Y_{1}^8+\E Y_{1}^4}{\sqrt{n} p \left(n \left(\E Y^4\right)^2+\frac{2}{n^2}\right)}.
\end{align*}
Therefore, it is sufficient to control
\begin{align} \label{clt0-0}
	\frac{n \E Y_{1}^8+\E Y_{1}^4}{\sqrt{n} p \left(n \left(\E Y_1^4\right)^2+\frac{2}{n^2}\right)} \to 0. 
\end{align}  
By Proposition \ref{prop:even_power}, 
\begin{align*}
	\frac{n \E Y_{1}^8+\E Y_{1}^4}{\sqrt{n} p \left(n \left(\E Y_1^4\right)^2+\frac{2}{n^2}\right)} \sim \begin{cases}
		\frac{\E Y_1^8}{\sqrt{n}p\left(\E Y_1^4\right)^2},&\alpha<3,\\
		\frac{n^3\E Y_1^8}{\sqrt{n}p\left(n^3\left(\E Y_1^4\right)^2+2\right)}, &\alpha=3,\\
		\frac{n^3\E Y_1^8}{2\sqrt{n}p}, &3<\alpha<6,\\
		\frac{n^3\E Y_1^8+n^2\E Y_1^4}{2\sqrt{n}p}, &\alpha=6,\\
		\frac{n^2\E Y_1^4}{2\sqrt{n}p}, &\alpha>6,
	\end{cases}
\end{align*}
which is ensured by
\begin{align*}
	\begin{cases}
		\frac{n}{p^2}\to 0,&\alpha<2,\\
		\frac{n^{\alpha-1}}{p^2} \frac{\tilde{l}^2(\sqrt{a_n})}{l^2(\sqrt{a_n})}  \to 0, &\alpha=2,\E X^{2}=\infty,\\
		\frac{n^{\alpha-1}}{p^2} \frac{1}{l^2(\sqrt{n})}  \to 0,&2\leq\alpha<3, \E X^{2}<\infty\\
		\frac{n}{p}\frac{l(\sqrt{n})}{l^2(\sqrt{n})+2}\to 0,&\alpha=3,\\
		\frac{n^{5-\alpha}l^2(\sqrt{n})}{p^2}\to 0,&3<\alpha\leq 5,\\
		n, p \to \infty,&\alpha>5.
	\end{cases}
\end{align*}
\end{proof}

\section{Proof for general distributions}
\begin{lemma}[\citealt{bingham1981conditions}]\label{lem:extreme_value_property}
Assume that $X_1,X_2,\cdots$ are i.i.d. non-negative and regularly varying with $0<\alpha\leq 1$. Consider 
\begin{align*}
	L_n=\frac{X_1+\cdots+X_n}{\max(X_1,\cdots,X_n)}.
\end{align*}
Then for any $\epsilon>0$, we have 
\begin{align*}
	\E L_n=\begin{cases}
		(1-\alpha)^{-1}+o(1),& 0<\alpha<1,\\
		o(n^{\epsilon}), & \alpha=1.
	\end{cases}
\end{align*}
\end{lemma}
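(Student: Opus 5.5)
The statement is a cited result (Bingham--Teugels type), but I would prove it directly. I would use the same Laplace-transform device that the appendix uses for self-normalized moments. Write $M_n=\max_i X_i$, $S_n=\sum_i X_i$. By exchangeability, $\E L_n = n\,\E[X_1/M_n]$, since on ties the ratio is still bounded. To avoid conditioning on the maximum directly, I would condition on which index attains it. With $F$ the distribution of $X$,
\begin{align*}
\E L_n = n\,\E\Big[\frac{S_n}{X_n}\mathbf{1}\{X_n=M_n\}\Big] = n\,\E\Big[\mathbf{1}\{X_n=M_n\}\Big(1+\sum_{i<n}\frac{X_i}{X_n}\Big)\Big].
\end{align*}
Conditioning on $X_n=x$, the other $n-1$ variables are i.i.d. given they are $\le x$. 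This gives
\begin{align*}
\E L_n = n\int F(x)^{n-1}\Big(1+(n-1)\frac{m(x)}{x}\Big)dF(x), \qquad m(x)=\E[X\mathbf{1}\{X\le x\}]/F(x).
\end{align*}
For continuous $F$ the first part contributes exactly $1$; atoms only produce lower-order corrections, which I would handle by a bounded tie count.

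The key input is Karamata's theorem for the truncated mean. Write $\bar F(x)=x^{-\alpha}l(x)$. For $\alpha<1$ we have $\int_0^x t\,dF(t) = \int_0^x \bar F(t)dt - x\bar F(x) \sim \frac{\alpha}{1-\alpha}x\bar F(x)$. Hence $(n-1)m(x)/x \sim \frac{\alpha}{1-\alpha}(n-1)\bar F(x)$ for large $x$. Substituting $u = n\bar F(x)$, the relevant $x$ are those with $\bar F(x)\asymp 1/n$, because $F(x)^{n-1}\approx e^{-u}$ and $n\,dF(x) = -du$. This yields
\begin{align*}
\E L_n \to 1+\frac{\alpha}{1-\alpha}\int_0^\infty u e^{-u}du = 1+\frac{\alpha}{1-\alpha}=\frac{1}{1-\alpha}.
\end{align*}
To make this rigorous I would split the $x$-integral at a fixed large level $x_0$. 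On $x\le x_0$ the factor $F(x_0)^{n-1}$ decays geometrically, which kills even the polynomial prefactor $n^2$. On $x>x_0$, Potter bounds give $m(x)/x\le C\bar F(x)$ uniformly, so dominated convergence in the variable $u$ applies with dominating function $C(1+u)e^{-u/2}$, using $F^{n-1}\le e^{-(n-1)\bar F}$.

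For $\alpha=1$, Karamata gives $\int_0^x \bar F = \widetilde{l}(x)$, which is slowly varying with $l/\widetilde{l}\to 0$. Then $(n-1)m(x)/x\approx n\widetilde{l}(x)/x$. In the $u$-scale, $x\approx a_n$ with $a_n$ of order $n\,l(a_n)$, so the integrand is of order $u^{-1}\cdot\widetilde{l}(a_n)/l(a_n)\cdot u\,e^{-u}$, up to a slowly varying factor. Thus $\E L_n = O(\widetilde{l}(a_n)/l(a_n))$, and a ratio of slowly varying functions evaluated at $a_n\approx n^{1+o(1)}$ is $o(n^\epsilon)$.

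The main obstacle is uniformity in the tails of the $u$-integral. Near $u\to 0$ (very large $x$) the bound $n\,m(x)/x\lesssim u$ is harmless. For large $u$ (moderate $x$) the prefactor $n$ must be beaten by $F^{n-1}$, and Potter bounds are needed with exponents close to $\alpha$. In the $\alpha=1$ case the slowly varying ratio must be controlled uniformly over $u$ in compact sets plus tails. I would first try Potter bounds $l(y)/l(x)\le C\max\{(y/x)^{\delta},(x/y)^{\delta}\}$ with $\delta$ small, absorbing the resulting $u^{\pm\delta}$ powers into $e^{-u/2}$.
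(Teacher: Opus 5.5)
The paper gives no proof of this lemma. It is imported from Bingham and Teugels (1981) and used only once: via $|\bar Y|\le L_n/n$ applied to $|X_i|$, to get $n^{1-\delta}\E|\bar Y|\to 0$ for $\alpha\le 1$. Your argument is therefore a different, self-contained route, and it is essentially sound.

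Despite your opening sentence, it does not use the appendix's Laplace-transform device. It is the classical conditioning on the index of the maximum, which for continuous $F$ gives
\[
\E L_n=1+n(n-1)\int F(x)^{n-2}\,\frac{\E[X\mathbf{1}\{X\le x\}]}{x}\,dF(x),
\]
followed by Karamata's theorem for the truncated mean and dominated convergence in $u=n\bar F(x)$.

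For $0<\alpha<1$ this is complete. Karamata alone (no Potter bounds) makes $\E[X\mathbf{1}\{X\le x\}]/(x\bar F(x))$ bounded on $[x_0,\infty)$, which gives your dominating function. The limit $1+\frac{\alpha}{1-\alpha}\int_0^\infty ue^{-u}\,du=(1-\alpha)^{-1}$ is right.

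Compared with the citation, your route makes the appendix self-contained using tools it already relies on. For $\alpha=1$ it also exhibits the actual order $\widetilde l(a_n)/l(a_n)$ (with your $a_n$, $n\bar F(a_n)\to 1$, not the paper's $a_n$), rather than only $o(n^{\epsilon})$. The citation buys brevity.

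Two steps need tightening.

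First, ties. In fact $n\E[\frac{S_n}{X_n}\mathbf{1}\{X_n=M_n\}]=\E[N_nL_n]$, where $N_n$ is the number of indices attaining the maximum. $N_n$ is not bounded, so your "bounded tie count" does not work, and $n\,dF=-du$ fails at atoms. The paper's setting does allow atoms: it explicitly treats Zipf data. The inequality $\E L_n\le\E[N_nL_n]$ already gives the upper bound, which is all the $\alpha=1$ claim and the paper's application need. For the exact constant when $\alpha<1$, realize $X_i=F^{\leftarrow}(U_i)$ with i.i.d. uniforms and condition on the a.s. unique index of the largest $U_i$. For any $F$ with no atom at $0$ this gives the exact identity
\[
\E L_n=1+n(n-1)\int_0^1 v^{n-2}\,\frac{\int_0^v F^{\leftarrow}(w)\,dw}{F^{\leftarrow}(v)}\,dv .
\]
Then substitute $v=1-u/n$ and apply Karamata to $s\mapsto F^{\leftarrow}(1-s)$, which is regularly varying with index $-1/\alpha$ at $0$. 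This reproduces your computation with no atom issue.

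Second, for $\alpha=1$ the uniformity you flag as the main obstacle disappears, because only $o(n^{\epsilon})$ is claimed.
\begin{itemize}
\item For $x\ge 1$, $\E[X\mathbf{1}\{X\le x\}]\le\int_0^x\bar F\le C_\delta x^{\delta}$. Hence the second term (in either display) is at most $n\,P(M_{n-1}\le y)+C_\delta\,n\,y^{\delta-1}$ for any $y\ge 1$.
\item Take $y=n^{(1-\delta)/(1+\delta)}$. Use $P(M_{n-1}\le y)=F(y)^{n-1}\le\exp\{-(n-1)\bar F(y)\}$ together with $\bar F(y)\ge y^{-1-\delta}$ for large $y$.
\item This yields $\E L_n=O(n^{3\delta})$ for every $\delta>0$, with no Potter bookkeeping.
\end{itemize}
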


\begin{lemma}\label{lem:bounds_for_Y_bar}
Assume that $X_1,X_2,\cdots$ are i.i.d. regularly varying with $\alpha>1$ and $\E X_1=0$. For any $k,l\geq 1$,
\begin{align*}
	\E\bar{Y}^l=O(n^{-l}),\quad\E \left(Y_1^k\bar{Y}^l\right)=O(n^{-l-1}).
\end{align*}
\end{lemma}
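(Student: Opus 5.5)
Write $\bar Y=\frac1n\sum_{j=1}^n Y_j$. Every moment then becomes a combination of the mixed moments controlled by Proposition \ref{prop:general_power}.

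For the first claim, expand
\begin{align*}
\E\bar Y^l=n^{-l}\sum_{j_1,\dots,j_l}\E\left(Y_{j_1}\cdots Y_{j_l}\right).
\end{align*}
Group the index tuples by the partition they induce. By exchangeability, a partition with $r$ blocks of sizes $k_1,\dots,k_r$ (with $k_1+\cdots+k_r=l$) contributes at most $C n^r\,|\E(Y_1^{k_1}\cdots Y_r^{k_r})|$. It therefore suffices to show, in every regime of $\alpha>1$,
\begin{align*}
\E\left(Y_1^{k_1}\cdots Y_r^{k_r}\right)=O(n^{-r}).
\end{align*}
For $1<\alpha\le2$ this is exactly the first bullet of Proposition \ref{prop:general_power}.

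For $2<\alpha\le 3$, the second bullet gives the exponent $\frac{\beta}{2}(r-|I_2|)+|I_2|$. Choosing $\beta\in(2,\alpha)$ makes this at least $r$.

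For $\alpha>3$, the exponent in the third bullet is
\begin{align*}
|I_1|+\tfrac12\sum_{i<\lceil\alpha\rceil} i|I_i|+\tfrac{\beta}{2}\#\{\text{remaining blocks}\}.
\end{align*}
A singleton block contributes $1+\frac12\ge1$. A block of size $i\ge2$ below $\lceil\alpha\rceil$ contributes $i/2\ge1$. A larger block contributes $\beta/2>1$ once $\beta>2$. Summing over blocks gives exponent at least $r$. Hence every partition class is $O(n^{r}\cdot n^{-r})=O(1)$, there are finitely many partitions, and $\E\bar Y^l=O(n^{-l})$.

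For the second claim, expand in the same way:
\begin{align*}
\E(Y_1^k\bar Y^l)=n^{-l}\sum_{j_1,\dots,j_l}\E\left(Y_1^kY_{j_1}\cdots Y_{j_l}\right).
\end{align*}
Now index $1$ is pinned. A configuration in which the $j$'s use $r$ distinct indices other than $1$ has about $n^r$ choices, and the moment involves either $r+1$ distinct variables (block $\{1\}$ plus $r$ others) or, if some $j$'s equal $1$, still $r+1$ blocks with the first block of size at least $k$. By the block-by-block count above, each such moment is $O(n^{-(r+1)})$. So each configuration contributes $O(n^{r}\,n^{-(r+1)})=O(n^{-1})$. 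Multiplying by the prefactor $n^{-l}$ gives $O(n^{-l-1})$.

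The main obstacle is the per-block exponent bookkeeping in Proposition \ref{prop:general_power}, in particular the boundary cases $\alpha=3$ and $\lceil\alpha\rceil$ equal to a block size. There one must confirm that the exponent is at least $1$ per block, so that a $\beta<\alpha$ with $\beta>2$ can always be chosen and no slowly varying factor spoils the bound. I would also check that blocks of size $1$ (odd moments, which vanish only through $\E X_1=0$) are handled with the full exponent $1$, rather than relying on symmetry.
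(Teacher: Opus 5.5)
Your proof is correct and is essentially the paper's argument: expand $\bar Y^l$ (resp.\ $Y_1^k\bar Y^l$) into mixed moments, count $O(n^r)$ index tuples with $r$ distinct indices, and invoke Proposition~\ref{prop:general_power} to get $\E(Y_1^{k_1}\cdots Y_r^{k_r})=O(n^{-r})$. The paper simply asserts this last bound for all $\alpha>1$. You verify it bullet by bullet, choosing $\beta\in(2,\alpha)$ when $\alpha>2$ so that every block contributes exponent at least $1$, which is a correct filling-in of that step.
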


\begin{proof}
By Proposition \ref{prop:general_power}, $\E \left(Y_1^{k_1}\cdots Y_r^{k_r}\right)=O(n^{-r})$ for any $r\geq 1$. Recall the definition of $\bar{Y}$ in \eqref{form:def_Y_bar}, we have 
\begin{align*}
	\E\bar{Y}^l=\frac{1}{n^l}\sum_{i_1,\cdots,i_l}\E \left(Y_{i_1}\cdots Y_{i_l}\right)=O(n^{-l}).
\end{align*}
And similarly,
\begin{align*}
	\E \left(Y_1^k\bar{Y}^l\right)=\frac{1}{n^l}\sum_{i_1,\cdots,i_l}\E \left(Y_1^kY_{i_1}\cdots Y_{i_l}\right)=O(n^{-l-1}).
\end{align*}
\end{proof}

\begin{proposition}\label{prop:even_power_for_tY}
Assume that $X_1,X_2,\cdots$ are i.i.d. regularly varying with $\alpha>1$ and $\E X_1=0$. For $k\geq2$ and any $\epsilon>0$,
\begin{align*}
	\E\tY_1^{2k}=(1+o(1))\left(\E Y_1^{2k}+R_n\right),
\end{align*}
where $\widetilde{Y}_1$ is defined in \eqref{form:def_Y_tilde} and
\begin{align*}
	R_n=\begin{cases}
		O(n^{-2}), &1<\alpha\leq 2,\\
		o(n^{-\alpha+\epsilon}), &2<\alpha\leq 3,\\
		O(n^{-3}), &\alpha>3.
	\end{cases}
\end{align*}
\end{proposition}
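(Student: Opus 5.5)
Starting from the representation \eqref{form:def_Y_tilde}, I write
\begin{align*}
\tY_1^{2k}=\frac{(Y_1-\bar{Y})^{2k}}{(1-n\bar{Y}^2)^{k}},
\end{align*}
and split the argument into the numerator and the denominator. For the numerator I expand binomially:
\begin{align*}
(Y_1-\bar{Y})^{2k}=Y_1^{2k}+\sum_{l=1}^{2k}\binom{2k}{l}(-1)^lY_1^{2k-l}\bar{Y}^l .
\end{align*}
Each cross term is controlled by Lemma \ref{lem:bounds_for_Y_bar}. When $2k-l\ge1$ this gives $\E(Y_1^{2k-l}\bar Y^l)=O(n^{-l-1})$, and the pure term is $\E\bar Y^{2k}=O(n^{-2k})$. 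For $k\ge2$ every term is therefore $O(n^{-2})$, which already handles the case $1<\alpha\le2$ with $R_n=O(n^{-2})$.

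For $\alpha>2$ the bound $O(n^{-2})$ is not sharp enough, and I would redo the estimate with Proposition \ref{prop:general_power}. I write $\E(Y_1^{2k-l}\bar Y^l)=n^{-l}\sum_{i_1,\dots,i_l}\E(Y_1^{2k-l}Y_{i_1}\cdots Y_{i_l})$ and group the indices by coincidence pattern. A pattern with $r$ distinct indices carries $n^{r-1}$ choices (or $n^r$ if index $1$ does not appear among them). Its expectation is bounded by the exponents in Proposition \ref{prop:general_power}, and the key gain there is that singleton indices, those in $I_1$, cost $n^{-\beta/2}$ when $2<\alpha\le3$ and $n^{-3/2}$ (i.e.\ $n^{-1-1/2}$) when $\alpha>3$.

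The worst case is the term $l=1$, namely $\E(Y_1^{2k-1}\bar Y)=n^{-1}\E Y_1^{2k}+n^{-1}(n-1)\E(Y_1^{2k-1}Y_2)$. The first piece is $o(\E Y_1^{2k})$. For the second piece, Proposition \ref{prop:general_power} gives $\E(Y_1^{2k-1}Y_2)=O(n^{-\beta})$ when $2<\alpha\le3$, since $|I_2|=0$ and $r=2$. When $\alpha>3$ the same proposition yields at least $n^{-1-\frac12-\frac{\beta}{2}}$ or better, which is $O(n^{-3})$ once the exponents are worked out. Taking $\beta=\alpha-\epsilon$ gives $o(n^{-\alpha+\epsilon})$ as claimed. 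I expect the hardest step to be the bookkeeping over all coincidence patterns for general $l$, in particular checking that each pattern's count multiplied by its moment bound stays within $R_n$. The pattern in which all $\bar Y$ indices are paired (so $I_2$ is large) needs care: it gives $n^{-l}\cdot n^{l/2}\cdot n^{-l/2-1}$ and hence is fine for $l\ge2$.

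For the denominator I need $(1-n\bar Y^2)^{-k}=1+O(n\bar Y^2)$ on a high-probability event. Here $\E(n\bar Y^2)=O(n^{-1})$ by Lemma \ref{lem:bounds_for_Y_bar}, so $n\bar Y^2\to0$ in $L^1$. The difficulty is that $\tY_1^{2k}\le1$ while $(1-n\bar Y^2)^{-k}$ can blow up. I would avoid this by not dividing at all. Instead I use the exact identity $\E\tY_1^{2k}\,(1-n\bar Y^2)^{k}=\E(Y_1-\bar Y)^{2k}$ and expand $(1-n\bar Y^2)^k$. The correction terms are $\E(\tY_1^{2k}(n\bar Y^2)^j)$. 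By exchangeability and Cauchy–Schwarz or Hölder these are bounded by $o(1)\cdot\E\tY_1^{2k}$: conditioning on the order statistics, $n\bar Y^2$ is small relative to $\tY_1^{2k}$. This yields $\E\tY_1^{2k}(1+o(1))=\E Y_1^{2k}+R_n$, which is the stated statement.
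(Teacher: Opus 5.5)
Your treatment of $\E(Y_1-\bar Y)^{2k}$ matches the paper's. You use Lemma~\ref{lem:bounds_for_Y_bar} for the higher cross terms and Proposition~\ref{prop:general_power} for $\E(Y_1^{2k-1}Y_2)$. The coincidence-pattern bookkeeping you anticipate is not needed. The lemma already gives $\E(Y_1^{2k-l}\bar Y^l)=O(n^{-l-1})=O(n^{-3})$ for $2\le l<2k$ and $\E\bar Y^{2k}=O(n^{-4})$, and these fit inside $R_n$ in every regime, so only $l=1$ needs the sharper bound. A small point: $\beta=\alpha-\epsilon$ yields only $O(n^{-\alpha+\epsilon})$, so take $\beta\in(\alpha-\epsilon,\alpha)$ to get the little-$o$.

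The gap is in the normalization step, which is the only genuinely new difficulty in this proposition. Your identity $\E\tY_1^{2k}(1-n\bar Y^2)^k=\E(Y_1-\bar Y)^{2k}$ is correct. Since $0\le n\bar Y^2\le1$, everything reduces to showing $\E(\tY_1^{2k}\,n\bar Y^2)=o(\E\tY_1^{2k})$. The appeal to exchangeability, Cauchy--Schwarz and ``conditioning on the order statistics'' does not establish this.

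Cauchy--Schwarz gives $(\E\tY_1^{4k})^{1/2}(\E n^2\bar Y^4)^{1/2}$. The natural a priori control of the first factor is $\E\tY_1^{4k}\le\E\tY_1^{2k}$; using the true size of $\E\tY_1^{4k}$ would be circular. So you only get $O\big((\E\tY_1^{2k})^{1/2}n^{-1}\big)$, which is $o(\E\tY_1^{2k})$ only if $\E\tY_1^{2k}\gg n^{-2}$. That fails as soon as $\E X_1^4<\infty$.

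The missing ingredients are H\"older with a large conjugate exponent together with a polynomial lower bound on the moment. For an integer $q>k$,
\[
\E(\tY_1^{2k}n\bar Y^2)\le(\E\tY_1^{2k})^{1-1/q}\,n\,(\E\bar Y^{2q})^{1/q}=\E\tY_1^{2k}\cdot O\big(n^{-1}(\E\tY_1^{2k})^{-1/q}\big)=\E\tY_1^{2k}\cdot O(n^{k/q-1}).
\]
This uses $|\tY_1|\le1$, Lemma~\ref{lem:bounds_for_Y_bar}, and Jensen's inequality $\E\tY_1^{2k}\ge(\E\tY_1^2)^k=n^{-k}$. With this step supplied, your route is a valid alternative to the paper's.

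The paper instead takes the lower bound for free from $1-n\bar Y^2\le1$. For the upper bound it restricts to $\{n\bar Y^2\le n^{-1/2}\}$, where the denominator is at least $(1-n^{-1/2})^k$. The complementary event has probability $O(n^{-L})$ for every $L$, by Markov's inequality and $\E\bar Y^{2m}=O(n^{-2m})$, so its contribution is negligible against $\E Y_1^{2k}\ge n^{-k}$. Both arguments rest on the same two facts: high moments of $\bar Y$ and the $n^{-k}$ lower bound. Truncation avoids tuning an exponent, while your identity handles both inequalities at once.
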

\begin{proof}
On the one hand,
\begin{align*}
	\tY_1^{2k}=\frac{(Y_1-\bar{Y})^{2k}}{(1-n\bar{Y}^2)^k}\geq(Y_1-\bar{Y})^{2k}.
\end{align*}
On the other hand, split $\widetilde{Y}_1^{2k}$ as
\begin{align*}
	\tY_1^{2k}=\tY_1^{2k}\one(n\bar{Y}^2\leq\frac{1}{\sqrt{n}})+\tY_1^{2k}\one(n\bar{Y}^2>\frac{1}{\sqrt{n}}).
\end{align*}
By Holder's inequality and Lemma \ref{lem:bounds_for_Y_bar},
\begin{align*}
	\E\tY_1^{2k}\one(n\bar{Y}^2>\frac{1}{\sqrt{n}})\leq\sqrt{\E\tY_1^{4k}}\sqrt{P(n\bar{Y}^2>\frac{1}{\sqrt{n}})}=o(n^{-l}),
\end{align*}
for any $l>0$. So we have 
\begin{align*}
	\E\tY_1^{2k}\leq&\E\tY_1^{2k}\one(n\bar{Y}^2\leq\frac{1}{\sqrt{n}})+o(n^{-l})\leq\frac{1}{(1-\frac{1}{\sqrt{n}})^k}\E(Y_1-\bar{Y})^{2k}+o(n^{-l}).
\end{align*}
Therefore, we only need to consider 
\begin{align*}
	\E(Y_1-\bar{Y})^{2k}=\E Y_1^{2k}-2k\E \left(Y_1^{2k-1}\bar{Y}\right)+\sum_{l=0}^{2k-2}\binom{2k}{l}\E \left(Y_1^{l}\bar{Y}^{2k-l}\right).
\end{align*}
By Lemma \ref{lem:bounds_for_Y_bar}, we have 
\begin{align*}
	\sum_{l=0}^{2k-2}\binom{2k}{l}\E Y_1^l\bar{Y}^{2k-l}=O(n^{-3}).
\end{align*}
Then, we deal with  
\begin{align*}
	\E \left(Y_1^{2k-1}\bar{Y}\right)=\frac{1}{n}\sum_{i=1}^{n}\E \left(Y_1^{2k-1}Y_i\right)=\frac{1}{n}\E Y_1^{2k}+\frac{n-1}{n}\E \left(Y_1^{2k-1}Y_2\right).
\end{align*}
By Proposition \ref{prop:general_power}, we have 
\begin{align*}
	\E \left(Y_1^{2k-1}Y_2\right)=\begin{cases}
		O(n^{-2}), &1<\alpha\leq 2,\\
		o(n^{-\alpha+\epsilon}), & 2<\alpha\leq 3,\\
		O(n^{-3}), &\alpha>3,
	\end{cases}
\end{align*}
which completes the proof.

\end{proof}

\begin{proposition}\label{prop:general_power_for_tY}
Assume that $X_1,X_2,\cdots$ are i.i.d. regularly varying with $\alpha>0$. For positive integers $k_1,\cdots,k_r$, 
\begin{align*}
	\E \left(\tY_1^{k_1}\cdots \tY_r^{k_r}\right)=O(n^{-r}).
\end{align*}
\end{proposition}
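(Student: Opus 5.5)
The plan is to use only the deterministic constraints $\sum_{k=1}^n \tY_{k}=0$ and $\sum_{k=1}^n \tY_{k}^2=1$, together with exchangeability of $(\tY_1,\dots,\tY_n)$. No moment assumption on $X$ enters, which is why the bound should hold for every $\alpha>0$. Here $\tY_k$ denotes $\tY_{1k}$, the entries of a single column. Since $\sum_k \tY_k^2=1$, we have $|\tY_k|\le 1$ for all $k$, so every power is bounded. On the null event where all entries of the column coincide, the denominator vanishes; we set $\tY\equiv 0$ there. This does not affect the identities used below except on a set where everything is zero.

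\emph{Step 1 (all exponents at least two).} Suppose $k_i\ge 2$ for all $i$. Because $|\tY_i|\le 1$, we have $|\tY_i^{k_i}|\le \tY_i^2$, hence
\begin{align*}
\left|\E \left(\tY_1^{k_1}\cdots \tY_r^{k_r}\right)\right|\le \E\left(\tY_1^2\cdots\tY_r^2\right).
\end{align*}
By exchangeability,
\begin{align*}
n(n-1)\cdots(n-r+1)\,\E\left(\tY_1^2\cdots\tY_r^2\right)=\E\sum_{i_1,\dots,i_r\ \text{distinct}}\tY_{i_1}^2\cdots\tY_{i_r}^2\le \E\Big(\sum_{k=1}^n\tY_k^2\Big)^r=1.
\end{align*}
This gives $O(n^{-r})$ in this case.

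\emph{Step 2 (removing exponents equal to one).} Argue by induction on $r$, and for fixed $r$ on the number $|I_1|$ of indices with $k_i=1$. Suppose, after relabeling, $k_1=1$, and write $W=\prod_{i=2}^r\tY_i^{k_i}$. Multiply the identity $\sum_{j=1}^n\tY_j=0$ by $W$ and take expectations. The indices $j\notin\{2,\dots,r\}$ give $n-r+1$ identical contributions $\E(\tY_1W)$, by exchangeability. Therefore
\begin{align*}
\E\left(\tY_1 W\right)=-\frac{1}{n-r+1}\sum_{j=2}^r \E\Big(\tY_j^{k_j+1}\prod_{2\le i\le r,\, i\ne j}\tY_i^{k_i}\Big).
\end{align*}
Each term on the right is a mixed moment over $r-1$ distinct indices. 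By the induction hypothesis in $r$, each is $O(n^{-(r-1)})$. The prefactor $(n-r+1)^{-1}$ then yields $O(n^{-r})$. The base case is $r=1$: there $\E\tY_1=0$ (take $W=1$), and $\E\tY_1^{k}\le\E\tY_1^2=1/n$ for $k\ge2$.

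The induction is well founded because Step 2 strictly decreases $r$, while Step 1 handles every configuration with no singletons. I expect the main obstacle to be conceptual rather than computational. One might be tempted to use Proposition \ref{prop:general_power} or Lemma \ref{lem:bounds_for_Y_bar}, but those need $\alpha>1$ and a centered $X$. The point is instead to see that the sum-zero constraint does the work that $\E X=0$ did there. Some bookkeeping is also needed: the bound must be uniform over the finitely many exponent patterns of fixed total size, and the degenerate event must be handled as described above.
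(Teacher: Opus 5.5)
Your proof is correct and follows essentially the same route as the paper: the base case is the same, and the singleton-removal step (multiply by $\sum_k \tY_k=0$, use exchangeability, divide by $n-r+1$) is exactly the paper's argument. The only differences are cosmetic. You handle the case $k_i\ge 2$ for all $i$ in one shot, via $|\tY_i|^{k_i}\le \tY_i^2$ and $\E\bigl(\sum_k\tY_k^2\bigr)^r\le 1$, whereas the paper inducts on $r$ using $\sum_i|\tY_i|^{k_{r+1}}\le 1$; and your explicit convention on the degenerate event, where only $\sum_k\tY_k^2\le 1$ is needed, is a small point the paper omits.
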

\begin{proof}
When $r=1$, it is obvious that
\begin{align*}
	\E\tY_1=0,
\end{align*}
and for $k_1\geq 2$,
\begin{align*}
	|\tY_1|^{k_1}\leq \tY_1^2,
\end{align*}
which concludes that
\begin{align*}
	\E \tY_1^{k_1}=O(\frac{1}{n}).
\end{align*}
We next consider the case when $k_1,\cdots,k_r\geq 2$. Since 
\begin{align*}
	\E\left(\left|\tY_1^{k_1}\cdots\tY_r^{k_r}\right|\sum_{i=1}^n\left|\tY_i\right|^{k_{r+1}}\right)\leq\E\left|\tY_1^{k_1}\cdots\tY_r^{k_{r}}\right|,
\end{align*}
by induction on $r\geq 2$, we have
\begin{align*}
	\E\left|\tY_1^{k_1}\cdots\tY_r^{k_r}\tY_{r+1}^{k_{r+1}}\right|=O(\frac{1}{n^{r+1}}).
\end{align*}
Finally, we deal with the case when at least one index $k_i$ is $1$. We still use the induction. Suppose that
\begin{align*}
	\E \left(\tY_1^{k_1}\cdots \tY_r^{k_r}\tY_{r+1}\cdots \tY_{r+t}\right)=O(\frac{1}{n^{r+t}}),
\end{align*}
we have
\begin{align*}
	0=&\E \left(\tY_1^{k_1}\cdots \tY_r^{k_r}\tY_{r+1}\cdots \tY_{r+t}\sum_{i=1}^n \tY_i\right)\\
	=&\sum_{i=1}^r\E \left(\tY_1^{k_1}\cdots \tY_i^{k_i+1}\cdots \tY_r^{k_r}\tY_{r+1}\cdots \tY_{r+t}\right)+\sum_{j=1}^{t}\E \left(\tY_1^{k_1}\cdots \tY_r^{k_r}\tY_{r+1}\cdots \tY_{r+j}^2\cdots \tY_{r+t}\right)\\
	&+(n-r-t)\E \left(\tY_1^{k_1}\cdots \tY_r^{k_r}\tY_{r+1}\cdots \tY_{r+t}\tY_{r+t+1}\right),
\end{align*}
which implies 
\begin{align*}
	\E \left(\tY_1^{k_1}\cdots \tY_r^{k_r}\tY_{r+1}\cdots \tY_{r+t}\tY_{r+t+1}\right)=O(\frac{1}{n^{r+t+1}}).
\end{align*}
\end{proof}
\begin{proposition}\label{prop:4th_power_for_tY}
Assume that $X_1,X_2,\cdots$ are i.i.d. regularly varying with $\alpha>0$. Then
\begin{align*}
	\lim_{n\to\infty}\frac{\E\tY_1^4}{\E Y_1^4}=1.
\end{align*}
\end{proposition}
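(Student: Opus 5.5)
We prove that $\E\tY_1^4/\E Y_1^4\to1$ for all $\alpha>0$, without assuming symmetry or $\E X_1=0$. The idea is to compare $\tY_1^4=(Y_1-\bar Y)^4/(1-n\bar Y^2)^2$ with $Y_1^4$, using the identities $\sum_k\tY_k=0$ and $\sum_k\tY_k^2=1$, and to split according to the tail regime.

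\textbf{Case $\alpha>1$.} Since $\tY$ is invariant under location shifts of the $X$'s, we may assume $\E X_1=0$. Proposition \ref{prop:even_power_for_tY} with $k=2$ gives $\E\tY_1^4=(1+o(1))(\E Y_1^4+R_n)$. It then suffices to check $R_n=o(\E Y_1^4)$ using Proposition \ref{prop:even_power}:
\begin{itemize}
\item for $1<\alpha<2$, $\E Y_1^4\asymp n^{-1}\gg n^{-2}$;
\item for $\alpha=2$, $\E Y_1^4\asymp l(\sqrt{a_n})a_n^{-1}$, and $a_n$ is $n$ times a slowly varying factor, so this is $\gg n^{-2}$;
\item for $2<\alpha<4$, $\E Y_1^4\asymp l(\sqrt n)n^{-\alpha/2}$; this dominates $n^{-\alpha+\epsilon}$ once $\epsilon<\alpha/2$, and dominates $n^{-3}$ when $\alpha<6$;
\item for $\alpha\ge4$, $\E Y_1^4\gtrsim n^{-2}\gg n^{-3}$.
\end{itemize}
One caveat: Proposition \ref{prop:even_power} is stated for symmetric $X$, but its proof uses only the Laplace transform of $X^2$. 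It therefore applies verbatim here, since $\E Y_1^4$ depends only on the law of $|X|$.

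\textbf{Case $0<\alpha\le1$.} Here the mean may not exist and shift invariance does not help, because $\bar Y$ is then of the same order as $\max_k|Y_k|$. This is the main obstacle. Let $M=\max_k|X_k|$ and $S=\sum_k|X_k|$. The plan is to show that on the event where one observation dominates, both $Y_1^4$ and $\tY_1^4$ are governed by the same extreme, and that the contribution from $\bar Y$ is negligible.

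\textbf{Step 1 (lower-order scale of $\E Y_1^4$).} By exchangeability, $n\E Y_1^4=\E\sum_kY_k^4$. This tends to $\E(1-U)$-type constants, namely $\Gamma(2-\alpha/2)/\Gamma(1-\alpha/2)=1-\alpha/2$ times $n^{-1}$. So $\E Y_1^4\asymp n^{-1}$, and it suffices to prove $\E\sum_k\tY_k^4-\E\sum_kY_k^4\to0$.

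\textbf{Step 2 (the $\bar Y$ term is small in $L^1$).} Write $n\bar Y=\sum_kX_k/\|\X\|$, so that $|n\bar Y|\le S/\|\X\|\le S/M=L_n$. We want $\E\sum_k\tY_k^4\to\E\sum_kY_k^4$. Since $\sum_k\tY_k^4$ and $\sum_kY_k^4$ are bounded by $1$, convergence in probability of their difference suffices. Write
\[
\sum_k\tY_k^4=\frac{\sum_k(Y_k-\bar Y)^4}{(1-n\bar Y^2)^2}.
\]
We have $|\bar Y|\le L_n/n$, and Lemma \ref{lem:extreme_value_property} gives $\E L_n=o(n^\epsilon)$. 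Hence $|\bar Y|=O_p(n^{-1+\epsilon})$ and $n\bar Y^2=O_p(n^{-1+2\epsilon})\to0$. Expanding $(Y_k-\bar Y)^4$, the cross terms are bounded by $\sum_k|Y_k|^3|\bar Y|\le|\bar Y|\to0$, and similarly for the others, using $\sum_kY_k^2=1$. So $\sum_k\tY_k^4-\sum_kY_k^4\to0$ in probability, and bounded convergence gives convergence of expectations.

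\textbf{Conclusion and main risk.} Combining Steps 1 and 2 gives $n(\E\tY_1^4-\E Y_1^4)\to0$, and the ratio tends to $1$ because $n\E Y_1^4$ stays bounded away from zero (it is $\ge n\cdot n^{-2}\cdot n=1$ by the identity $n^2\E Y^4\ge1$ rescaled, i.e.\ $\sum_kY_k^4\ge1/n$). That lower bound only gives $n^{-1}$ order, and the sharp $\asymp n^{-1}$ comes from Proposition \ref{prop:even_power}. I would double-check Step 2, where the main delicacy lies: applying Lemma \ref{lem:extreme_value_property} to $|X_k|$ (nonnegative, regularly varying) is legitimate, and $\sum_k|X_k|/\|\X\|\le S/M$ holds because $\|\X\|\ge M$. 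This argument in fact works for any $\alpha$ with $\E Y_1^4\gtrsim n^{-1}$, so it also covers $1<\alpha<2$ as a check.
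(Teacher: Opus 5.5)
Your case $\alpha>1$ is the paper's argument (Proposition~\ref{prop:even_power_for_tY} combined with the rates of Proposition~\ref{prop:even_power}). For $0<\alpha\le1$ you take a genuinely different and shorter route, and it is correct.

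\textbf{The $0<\alpha\le1$ case.} The paper works only with expectations. It discards the factor $(1-n\bar Y^2)^{-2}\ge1$ using $\tY_1^4\ge(Y_1-\bar Y)^4$ and $\tY_1^2\tY_2^2\ge(Y_1-\bar Y)^2(Y_2-\bar Y)^2$, and expands both products. It then bounds each correction term by moment estimates on $\bar Y$, obtained from Lemma~\ref{lem:extreme_value_property} and $|\bar Y|\le n^{-1/2}$. This yields only one-sided bounds, $\E\tY_1^4\ge\E Y_1^4+o(n^{-1})$ and $\E\tY_1^2\tY_2^2\ge\E Y_1^2Y_2^2+o(n^{-2})$. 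The upper bound is then recovered by contradiction from $1=n\E\tY_1^4+n(n-1)\E\tY_1^2\tY_2^2$.

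You instead compare $\sum_k\tY_k^4$ with $\sum_kY_k^4$, both of which lie in $[0,1]$. The numerator corrections are controlled by $|\bar Y|$, in fact deterministically, since $|\bar Y|\le n^{-1/2}$. The denominator only needs $n\bar Y^2\le L_n^2/n\to0$ in probability. Bounded convergence then gives the two-sided statement $n(\E\tY_1^4-\E Y_1^4)\to0$ directly, with no mixed-moment estimate and no contradiction step. Like the paper's contradiction step, your route needs the sharp limit $n\E Y_1^4\to1-\alpha/2>0$. So it only works where an $o(1)$ error in $\sum_k\tY_k^4$ suffices, which is why $\alpha>1$ still requires Proposition~\ref{prop:even_power_for_tY}.

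\textbf{Two corrections.}

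First, the reduction ``$\tY$ is location invariant, so we may assume $\E X_1=0$'' is not valid, because $Y_1$, and hence $\E Y_1^4$, is not location invariant.
\begin{itemize}
\item When $\E X_1^2=\infty$ the step can be repaired. The asymptotics of $\E Y_1^4$ in Proposition~\ref{prop:even_power} then depend only on the tail of $|X_1|$, which a shift does not change.
\item When $\E X_1^2<\infty$ it cannot be repaired. For $2<\alpha<4$ and $\E X_1\neq0$, Propositions~\ref{prop:even_power} and~\ref{prop:even_power_for_tY} (after rescaling) give $\E\tY_1^4/\E Y_1^4\to(\E X_1^2/\var X_1)^{\alpha/2}\neq1$.
\end{itemize}
So $\E X_1=0$ is a genuine hypothesis in that regime. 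The paper uses it tacitly by invoking Proposition~\ref{prop:even_power_for_tY}, so your proof has the same scope as the paper's. You should state it as an assumption, not as a WLOG.

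Second, in your conclusion, $\sum_kY_k^4\ge1/n$ only gives $n\E Y_1^4\ge n^{-1}$, not $\ge1$. The lower bound you actually need is the limit $n\E Y_1^4\to1-\alpha/2$ from Step~1, which you do have.
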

\begin{proof}
When $\alpha>1$, we have shown that 
\begin{align*}
	\E\tY_1^4=(1+o(1))(\E Y_1^4+R_n)
\end{align*}
and $R_n/\E Y_1^4\to 0$ by Proposition \ref{prop:even_power_for_tY} and Proposition \ref{prop:even_power}. So $\E\tY_1^4/\E Y_1^4\to 1$. When $\alpha\leq 1$, by Proposition \ref{prop:even_power} and \ref{form:identity_for_one}, we have 
\begin{align*}
	\E Y_1^4\sim \frac{2-\alpha}{2n},\quad\E Y_1^2Y_2^2\sim\frac{\alpha}{2n^2}.
\end{align*}
By Lemma \ref{lem:extreme_value_property}, for sufficiently small $\delta>0$,
\begin{align*}
	\E n^{1-\delta}\bar{Y}\to 0. 
\end{align*}
And by Cauchy's inequality,
\begin{align*}
	\left|\bar{Y}\right|\leq\frac{1}{\sqrt{n}}.
\end{align*}
It follows that 
\begin{align*}
	\E|\bar{Y}|^k=\E\left(|\bar{Y}|\cdot|\bar{Y}|^{k-1}\right)\leq n^{-\frac{k-1}{2}}\E|\bar{Y}|=o(n^{-\frac{k+1}{2}+\delta}),
\end{align*}
and that
\begin{align*}
	\E \left(Y_1Y_2\right)=\frac{\E n^2\bar{Y}^2-1}{n(n-1)}=o(n^{-\frac{3}{2}+\delta}).
\end{align*}
By simple algebra,
\begin{align*}
	\E\tY_1^{4}\geq\E\left(Y_1-\bar{Y}\right)^4=\E Y_1^4-4\E \left(Y_1^3\bar{Y}\right)+6\E \left(Y_1^2\bar{Y}^2\right)-4\E \left(Y_1\bar{Y}^3\right)+\E\bar{Y}^4.
\end{align*}
Note that via Proposition
\begin{align*}
	\E \left(Y_1^3\bar{Y}\right)=&\frac{1}{n}\E Y_1^4+\frac{n-1}{n}\E \left(Y_1^3Y_2\right)\leq\frac{1}{n}\E Y_1^4+\sqrt{\E Y_1^4}\sqrt{\E \left(Y_1^2Y_2^2\right)}=O(n^{-\frac{3}{2}}),\\
	\E \left(Y_1^2\bar{Y}^2\right)\leq&\sqrt{\E Y_1^4}\sqrt{\E\bar{Y}^4}=O(n^{-\frac{3}{2}}),\\
	\E \left(Y_1\bar{Y}^3\right)\leq&\sqrt{\E Y_1^2}\sqrt{\E\bar{Y}^6}=o(n^{-\frac{9}{4}+\delta}),\\
	\E\bar{Y}^4=&O(n^{-\frac{5}{2}+\delta}).
\end{align*}
We conclude that 
\begin{align*}
	\E\widetilde{Y}_1^4\geq\E Y_1^4+o(\frac{1}{n})
\end{align*}
Moreover, 
\begin{align*}
	\E\left(\tY_1^2\tY_2^2\right)\geq &\E(Y_1-\bar{Y})^2(Y_2-\bar{Y})^2\\
	=&\E \left(Y_1^2Y_2^2\right)-4\E \left(Y_1^2Y_2\bar{Y}\right)+2\E \left(Y_1^2\bar{Y}^2\right)+4\E \left(Y_1Y_2\bar{Y}^2\right)-4\E \left(Y_1\bar{Y}^3\right)+\E\bar{Y}^4.
\end{align*}
Note that
\begin{align*}
	\E \left(Y_1^2Y_2\bar{Y}\right)=&\frac{1}{n}\E \left(Y_1^2Y_2^2\right)+\frac{1}{n}\E \left(Y_1^3Y_2\right)+\frac{n-2}{n}\E \left(Y_1^2Y_2Y_3\right)\\
	=&\frac{1}{n}\E \left(Y_1^2Y_2^2\right)+\frac{1}{n}\E \left(Y_1^3Y_2\right)+\frac{1}{n}\left(\E \left(Y_1Y_2\right)-2\E \left(Y_1^3Y_2\right)\right)=o(n^{-\frac{5}{2}+\delta}),\\
	\E \left(Y_1^2\bar{Y}^2\right)=&\frac{1}{n^2}\E Y_1^4+\frac{n-1}{n^2}\E \left(Y_1^3Y_2\right)+\frac{n-1}{n^2}\E \left(Y_1^2Y_2^2\right)+\frac{(n-1)(n-2)}{n^2}\E \left(Y_1^2Y_2Y_3\right)\\
	=&o(n^{-\frac{5}{2}+\delta})\\
	\E \left(Y_1Y_2\bar{Y}^2\right)\leq&\sqrt{\E \left(Y_1^2Y_2^2\right)}\sqrt{\E\bar{Y}^4}=o(n^{-\frac{9}{4}+\delta}),\\
	\E \left(Y_1\bar{Y}^3\right)\leq&\sqrt{\E Y_1^2}\sqrt{\E\bar{Y}^6}=o(n^{-\frac{9}{4}+\delta}),\\
	\E\bar{Y}^4=&O(n^{-\frac{5}{2}+\delta}).
\end{align*}
We conclude that
\begin{align*}
	\E\tY_1^2\tY_2^2\geq\E Y_1^2Y_2^2+o(\frac{1}{n^2}).
\end{align*}
That is to say,
\begin{align*}
	\frac{\E\tY_1^4}{\E Y_1^4}\geq 1+o(1),\quad\frac{\E\tY_1^2\tY_2^2}{\E Y_1^2Y_2^2}\geq 1+o(1).
\end{align*}
If 
\begin{align*}
	\limsup_{n\to\infty}\frac{\E\tY_1^4}{\E Y_1^4}=b>1,
\end{align*}
then for sufficiently small $\varepsilon>0$, there exists $n$ such that
\begin{align*}
	1=n\E\tY_1^4+n(n-1)\E\left(\tY_1^2\tY_2^2\right)\geq& (b-\varepsilon)n\E Y_1^4+n(n-1)(1-\varepsilon)\E \left(Y_1^2Y_2^2\right)\\
	=&(b-1)n\E Y_1^4+(1-\varepsilon)>1,
\end{align*}
a contradiction.
\end{proof}

Write $\tbY=(\tY_1,\cdots,\tY_n)\trans$ where $\tY_1,\cdots,\tY_n$ are identically distributed random variables and exchangeable. Assuming $\tbY\trans \tbY=1$ and $\one_n\trans \tbY=0$, we have
\begin{gather*}
\E \tY_1=0,\quad \E \tY_1^2=\frac{1}{n},\quad \E \tY_1\tY_2=-\frac{1}{n-1}\E \tY_1^2=-\frac{1}{n(n-1)},\\
\E \tY_1^2\tY_2^2=\frac{1}{n(n-1)}-\frac{1}{n-1}\E \tY_1^4, \quad \E \tY_1^3\tY_2=-\frac{1}{n-1}\E \tY_1^4.
\end{gather*}

\begin{proposition} \label{quadratic_form_general}
For any deterministic $\A=(a_{ij})_{n\times n}$ with $\A\trans=\A, \tr(\A)=0, \A \one=\bf{0}$, we have
\begin{align*}
	\E \left(\tbY\trans \A \tbY\right)^2=\var\left(\tbY\trans \A \tbY\right)=c_1 \tr(\A\circ\A)+2 c_2 \tr(\A^2),
\end{align*}
where 
\begin{gather*}
	c_1=\frac{n^2(n+1)}{(n-1)(n-2)(n-3)}\E \tY_1^4-\frac{3}{(n-2)(n-3)},\\
	c_2=\frac{n^2-3n+3}{n(n-1)(n-2)(n-3)}-\frac{n}{(n-2)(n-3)}\E \tY_1^4.
\end{gather*}
\end{proposition}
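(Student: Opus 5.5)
The plan is to expand the square and reduce everything to fourth-order mixed moments of the exchangeable vector $\tbY$. First, $\E(\tbY\trans\A\tbY)=\sum_{i,j}a_{ij}\E\tY_i\tY_j=\frac1n\tr(\A)-\frac{1}{n(n-1)}\sum_{i\ne j}a_{ij}$. Since $\A\one=\bf 0$, we have $\sum_{i\ne j}a_{ij}=-\tr(\A)$, and $\tr(\A)=0$ makes both pieces vanish. Hence $\E\tbY\trans\A\tbY=0$, and the second moment equals the variance.

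Next I write
\begin{align*}
\E(\tbY\trans\A\tbY)^2=\sum_{i,j,k,l}a_{ij}a_{kl}\E(\tY_i\tY_j\tY_k\tY_l).
\end{align*}
By exchangeability, the expectation depends only on the partition pattern of $(i,j,k,l)$. There are five distinct moments: $m_4=\E\tY_1^4$, $m_{31}=\E\tY_1^3\tY_2$, $m_{22}=\E\tY_1^2\tY_2^2$, $m_{211}=\E\tY_1^2\tY_2\tY_3$ and $m_{1111}=\E\tY_1\tY_2\tY_3\tY_4$.

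All five are expressible through $m_4$ using the constraints $\sum_k\tY_k=0$ and $\sum_k\tY_k^2=1$. The listed identities already give $m_{31}=-m_4/(n-1)$ and $m_{22}=(1/n-m_4)/(n-1)$. Multiplying $\tY_1^2\tY_2$ by $\sum_k\tY_k=0$ gives $m_{31}+m_{22}+(n-2)m_{211}=0$. Multiplying $\tY_1\tY_2\tY_3$ by $\sum_k\tY_k$ gives $3m_{211}+(n-3)m_{1111}=0$. Solving these yields closed forms with denominators $(n-1)(n-2)(n-3)$, which is where the factors in $c_1$ and $c_2$ will come from.

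Then I group the index sums by pattern and convert each group into traces using $\tr(\A)=0$ and row sums zero. The diagonal-type groups give $\sum_i a_{ii}^2=\tr(\A\circ\A)$ and $\sum_{i,j}a_{ij}^2=\tr(\A^2)$. The other groups, such as $\sum_{i\ne j}a_{ii}a_{jj}$, $\sum_{i\ne j}a_{ii}a_{ij}$ and the fully distinct-index sums, I would handle by inclusion–exclusion. Full unrestricted sums like $\sum_{i,j}a_{ii}a_{jj}=\tr(\A)^2=0$ and $\sum_{j}a_{ij}=0$ reduce them to combinations of $\tr(\A\circ\A)$ and $\tr(\A^2)$. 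For instance, $\sum_{i\ne j}a_{ii}a_{jj}=-\tr(\A\circ\A)$ and $\sum_{j\ne i}a_{ii}a_{ij}=-\tr(\A\circ\A)$.

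\textbf{Main obstacle.} The hardest part is the bookkeeping of the patterns $\{i,j\}$ and $\{k,l\}$ with three or four distinct indices, together with the multiplicities (the factor 2 symmetries and the 4 ways of placing a pair). I would organise this by first writing $\E(\tbY\trans\A\tbY)^2=\alpha\,\tr(\A\circ\A)+\beta\,\tr(\A^2)$, which is justified by the invariances above. I would then fix $\alpha$ and $\beta$ by testing on two convenient admissible matrices. One natural choice is $\A=\mathbf{P}\mathbf{D}\mathbf{P}$ with $\mathbf{P}=\bI_n-\one\one\trans/n$, though direct computation on test matrices with $n=4$ is also an option. As a consistency check, I would verify against a direct collection of the $m_4$ and $m_{22}$ terms. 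Finally I substitute the expressions for the $m$'s in terms of $\E\tY_1^4$ and simplify to reach $c_1$ and $2c_2$.
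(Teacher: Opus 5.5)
\textbf{Verdict.} Your route is sound and differs from the paper's, but the calibration step, as you specify it, cannot determine both coefficients.

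\textbf{Comparison with the paper.} The paper has no separate structural lemma. It expands over the five index patterns and subtracts $\E(\tY_1\tY_2\tY_3\tY_4)(\one\trans\A\one)^2=0$. It then uses $(\one\trans\A\one)\tr(\A)=0$, $\one\trans\A^2\one=0$ and $\one\trans\A\,\diag(\A)=0$ to rewrite each pattern sum. In your notation it arrives at
\[
(m_4-4m_{31}-3m_{22}+12m_{211}-6m_{1111})\tr(\A\circ\A)+2(m_{22}-2m_{211}+m_{1111})\tr(\A^2),
\]
and then substitutes the exchangeability identities. You instead prove $\E(\tbY\trans\A\tbY)^2=\alpha\,\tr(\A\circ\A)+\beta\,\tr(\A^2)$ first and calibrate afterwards. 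That structural claim is true, but its justification is the inclusion--exclusion, not ``invariances''. Use M\"obius inversion over set partitions $\sigma$ of the four positions to express each exact-pattern sum through unrestricted sums $T_\sigma$. Then $\tr(\A)=0$ and $\A\one=\mathbf 0$ kill every $T_\sigma$ except $T_{\{1,3\}\{2,4\}}=T_{\{1,4\}\{2,3\}}=\tr(\A^2)$ and $T_{\{1,2,3,4\}}=\tr(\A\circ\A)$. Pushed one line further, the same inversion gives the coefficients directly: the coefficient of $T_\sigma$ is $\sum_{\pi\le\sigma}\mu(\pi,\sigma)m_\pi$, with $m_\pi$ the moment of the block type of $\pi$. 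This reproduces the paper's two brackets, so no test matrices are needed. It is a tidier way to do exactly the bookkeeping you flagged as the main obstacle.

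\textbf{The calibration step.} The family you name is degenerate. Admissibility of $\A=\mathbf P\mathbf D\mathbf P$ forces $\tr\mathbf D=0$. Then $A_{ii}=(1-2/n)d_i$ and $\tr(\A^2)=(1-2/n)\sum_i d_i^2$, so $(\tr(\A\circ\A),\tr(\A^2))$ is always a multiple of $((1-2/n)^2,\,1-2/n)$. Every such $\mathbf D$ therefore gives the same single equation $\alpha(1-2/n)^2+\beta(1-2/n)=m_4-m_{22}$. That equation is correct (via $\mathbf P\tbY=\tbY$), but it never involves $m_{211}$ or $m_{1111}$, and two such matrices give a singular system.

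You need a second matrix with a different ratio, for example $\A_0=\mathbf u\mathbf v\trans+\mathbf v\mathbf u\trans$ with $\mathbf u=e_1-e_2$, $\mathbf v=e_3-e_4$. It is admissible, has zero diagonal and $\tr(\A_0^2)=8$. Also $\tbY\trans\A_0\tbY=2(\tY_1-\tY_2)(\tY_3-\tY_4)$, whose second moment is $16(m_{22}-2m_{211}+m_{1111})$. This gives $\beta=2c_2$, and the first equation then returns $\alpha=c_1$.

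Finally, ``test matrices with $n=4$'' cannot mean $n=4$ literally, since $\alpha$ and $\beta$ depend on $n$. What works is a matrix supported on four coordinates, valid for every $n\ge4$, as above.
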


\begin{proof}
Observe
\begin{align*}
	\E \left(\tbY\trans \A \tbY\right)=&\E \tr(\A \tbY\tbY\trans)=\tr\left( \A \left(\frac{1}{n-1}\bI_n-\frac{1}{n(n-1)} \one_n  \one_n \trans\right)\right)=0.
\end{align*}
Write
\begin{align*}
	&\var\left(\tbY\trans \A \tbY\right)=\E \left(\tbY\trans \A \tbY\right)^2= \sum_{i_1,i_2,i_3,i_4=1}^n a_{i_1 i_2} a_{i_3 i_4} \E \left(\tY_{i_1}\tY_{i_2}\tY_{i_3}\tY_{i_4}\right)\\
	=&\left\{   \sum_{|\{i_1\}\cup\cdots\cup\{i_4\}|=1}^n+\sum_{|\{i_1\}\cup\cdots\cup\{i_4\}|=2}^n+\sum_{|\{i_1\}\cup\cdots\cup\{i_4\}|=3}^n+\sum_{|\{i_1\}\cdots\cup\{i_4\}|=4}^n\right\}  a_{i_1 i_2} a_{i_3 i_4} \E \left(\tY_{i_1}\tY_{i_2}\tY_{i_3}\tY_{i_4}\right)\\
	=& \E \tY_1^4 \sum_{i=1}^n a^2_{ii}+\E \left(\tY_1^2 \tY_2^2\right) \sum_{i\neq j} \left(a_{ii} a_{jj}+2a_{ij} a_{ji}\right)+4\E \left(\tY_1^3 \tY_2\right) \sum_{i \neq j} a_{ii}a_{ij} \\
	&+\E \left(\tY_1^2 \tY_2 \tY_3\right) \sum_{i\neq j \neq k}\left(2a_{ij} a_{kk}+4a_{ik}a_{jk}\right)+ \E \left(\tY_1 \tY_2 \tY_3 \tY_4\right) \sum_{i\neq j \neq k \neq l} a_{ij} a_{kl}\\
	=& \left(\E \tY_1^4-\E \left(\tY_1 \tY_2 \tY_3 \tY_4\right)\right) \sum_{i=1}^n a^2_{ii}+\left(\E \left(\tY_1^2 \tY_2^2\right)-\E \left(\tY_1 \tY_2 \tY_3 \tY_4\right)\right) \sum_{i\neq j} \left(a_{ii} a_{jj}+2a_{ij} a_{ji}\right)\\&+4\left(\E \left(\tY_1^3 \tY_2\right)-\E \left(\tY_1 \tY_2 \tY_3 \tY_4\right)\right) \sum_{i \neq j} a_{ii}a_{ij}\\ 
	&+\left(\E \left(\tY_1^2 \tY_2 \tY_3\right)-\E \left(\tY_1 \tY_2 \tY_3 \tY_4\right)\right) \sum_{i\neq j \neq k}\left(2a_{ij} a_{kk}+4a_{ik}a_{jk}\right)\\
	=& \left(\E \tY_1^4-3\E \left(\tY_1^2 \tY_2^2\right)+2\E \left(\tY_1 \tY_2 \tY_3 \tY_4\right)\right) \tr(\A\circ\A)+2 \left(\E \left(\tY_1^2 \tY_2^2\right)-\E \left(\tY_1 \tY_2 \tY_3 \tY_4\right)\right) \tr(\A^2)\\&+4\left(\E \left(\tY_1^3 \tY_2\right)-3\E \left(\tY_1^2 \tY_2 \tY_3\right)+2\E \left(\tY_1 \tY_2 \tY_3 \tY_4\right)\right) \sum_{i \neq j} a_{ii}a_{ij}\\ 
	&+4\left(\E \left(\tY_1^2 \tY_2 \tY_3\right)-\E \left(\tY_1 \tY_2 \tY_3 \tY_4\right)\right) \left(\one \trans \A^2 \one -\tr(\A^2)\right)\\
	=& \left(\E \tY_1^4-3\E \left(\tY_1^2 \tY_2^2\right)-4\E \left(\tY_1^3 \tY_2\right)+12\E \left(\tY_1^2 \tY_2 \tY_3\right)-6\E \left(\tY_1 \tY_2 \tY_3 \tY_4\right)\right) \tr(\A\circ\A)\\
	&+2 \left(\E \left(\tY_1^2 \tY_2^2\right)-2\E \left(\tY_1^2 \tY_2 \tY_3\right)+\E \left(\tY_1 \tY_2 \tY_3 \tY_4\right)\right) \tr(\A^2),
\end{align*}
where we use the facts that
\begin{align*}
	&0=\left(\one \trans \A \one \right)^2=\sum_{i_1,i_2,i_3,i_4=1}^n a_{i_1 i_2} a_{i_3 i_4}\\
	=&\sum_{i=1}^n a^2_{ii}+\sum_{i\neq j} \left(a_{ii} a_{jj}+2a_{ij} a_{ji}\right)+4 \sum_{i \neq j} a_{ii}a_{ij}\\
	& +\sum_{i\neq j \neq k}\left(2a_{ij} a_{kk}+4a_{ik}a_{jk}\right) +  \sum_{i\neq j \neq k \neq l} a_{ij} a_{kl},\\
	&0=\left(\one \trans \A \one \right) \tr(\A)=\sum_{i_1,i_2,i_3=1}^n a_{i_1 i_2} a_{i_3 i_3}\\
	=& \sum_{i\neq j \neq k} a_{ij} a_{kk}+\sum_{i \neq j}\left(2 a_{ii}a_{ij}+a_{ii}a_{jj}\right) +\sum_{i=1}^n a^2_{ii}\\
	=&\sum_{i\neq j \neq k} a_{ij} a_{kk}+2 \sum_{i \neq j} a_{ii}a_{ij}+\sum_{i,j=1}^n  a_{ii}a_{jj}=\sum_{i\neq j \neq k} a_{ij} a_{kk}+2 \sum_{i \neq j} a_{ii}a_{ij},\\
	& \sum_{i\neq j \neq k} a_{ik}a_{jk}=\sum_{i,j,k=1}^n a_{ik}a_{jk}-\sum_{i \neq j}\left(2a_{ii}a_{ij}+a^2_{ij}  \right)  - \sum_{i=1}^n a^2_{ii}\\ =&\one \trans \A^2 \one -\tr(\A^2)-2\sum_{i \neq j}a_{ii}a_{ij},\\
	&\sum_{i\neq j} \left(a_{ii} a_{jj}+2a_{ij} a_{ji}\right)=\sum_{i,j=1}^n \left(a_{ii} a_{jj}+2a_{ij} a_{ji}\right)-3\sum_{i=1}^n a^2_{ii}\\
	=&2\tr(\A^2)-3\tr(\A \circ \A),\\
	& \sum_{i \neq j}a_{ii}a_{ij}=\sum_{i,j=1}^n a_{ii}a_{ij}-\sum_{i=1}^n a^2_{ii}=\one \trans \A \diag(\A)-\tr(\A \circ \A).
\end{align*}
By exploring the exchangeability, we have
\begin{align*}
	1=&\E (\tY_1^2+\cdots+\tY_n^2)^2=n \E \tY_1^4+n(n-1)\E \tY_1^2\tY_2^2 \\
	\implies &  \E \tY_1^2\tY_2^2=\frac{1}{n(n-1)}-\frac{1}{n-1}\E \tY_1^4,\\
	0=&\E \tY_1^3 (\tY_1+\cdots+\tY_n)=\E \tY_1^4+(n-1)\E \tY^3_1\tY_2 \implies  \E \tY_1^3\tY_2=-\frac{1}{n-1}\E \tY_1^4,\\
	0=&\E \tY_1^2 \tY_2 (\tY_1+\cdots+\tY_n)=\E \tY^3_1\tY_2+\E \tY_1^2\tY_2^2+(n-2)\E \tY^2_1\tY_2\tY_3\\
	\implies & \E \tY_1^2\tY_2\tY_3=\frac{2}{(n-1)(n-2)}\E \tY_1^4-\frac{1}{n(n-1)(n-2)},\\
	0=&\E \tY_1 \tY_2\tY_3 (\tY_1+\cdots+\tY_n)=3\E \tY_1^2\tY_2\tY_3+(n-3)\E \tY_1 \tY_2\tY_3\tY_4\\
	\implies &\E \tY_1 \tY_2\tY_3\tY_4=\frac{3}{n(n-1)(n-2)(n-3)}-\frac{6}{(n-1)(n-2)(n-3)}\E \tY_1^4
\end{align*}
which yield 
\begin{align*}
	&\E \tY_1^4-3\E \tY_1^2 \tY_2^2-4\E \tY_1^3 \tY_2+12\E \tY_1^2 \tY_2 \tY_3-6\E \tY_1 \tY_2 \tY_3 \tY_4\\
	=&\frac{n^2(n+1)}{(n-1)(n-2)(n-3)}\E \tY_1^4-\frac{3}{(n-2)(n-3)},\\
	&\E \tY_1^2 \tY_2^2-2\E \tY_1^2 \tY_2 \tY_3+\E \tY_1 \tY_2 \tY_3 \tY_4\\
	=&\frac{n^2-3n+3}{n(n-1)(n-2)(n-3)}-\frac{n}{(n-2)(n-3)}\E \tY_1^4.
\end{align*}
\end{proof}

\begin{proposition} \label{all_moments}
Letting $\tbY_1, \tbY_2, \tbY_3$ be independent copies of $\tbY$, we have
\begin{align*}
	\E(\tbY_1\trans \tbY_2)^2=&\frac{1}{n-1}, \quad  \cov\left((\tbY_1\trans \tbY_2)^2,(\tbY_1\trans \tbY_3)^2\right)=0,\\
	\var\left((\tbY_1\trans \tbY_2)^2\right)=&c_1 \left(n\E \tY_1^4-\frac{1}{n}\right)+2 c_2 \frac{n-2}{n-1}\\
	=&\left(n \left(\E \tY_1^4\right)^2+\frac{2}{n^2}\right)(1+o(1)).
\end{align*}
\end{proposition}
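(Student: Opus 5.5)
We reduce everything to second moments of $\tbY$, conditioning on one vector and invoking Proposition \ref{quadratic_form_general}. Let $\mathbf{P}=\bI_n-\frac1n\one_n\one_n\trans$. Since $\one_n\trans\tbY=0$ and $\tbY\trans\tbY=1$, exchangeability gives
\begin{align*}
\E\tbY\tbY\trans=\frac{1}{n-1}\mathbf{P}.
\end{align*}
This is the identity already used in the proof of Proposition \ref{quadratic_form_general}.

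For the first moment, independence of $\tbY_1$ and $\tbY_2$ gives
\begin{align*}
\E(\tbY_1\trans\tbY_2)^2=\tr\left(\E\tbY_1\tbY_1\trans\,\E\tbY_2\tbY_2\trans\right)=\frac{\tr(\mathbf{P}^2)}{(n-1)^2}=\frac{1}{n-1}.
\end{align*}

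For the covariance, condition on $\tbY_1$. Since $\mathbf{P}\tbY_1=\tbY_1$,
\begin{align*}
\E\left[(\tbY_1\trans\tbY_2)^2\mid\tbY_1\right]=\frac{1}{n-1}\tbY_1\trans\mathbf{P}\tbY_1=\frac{1}{n-1},
\end{align*}
which is deterministic. The same holds for $\tbY_3$, and $(\tbY_1\trans\tbY_2)^2$ and $(\tbY_1\trans\tbY_3)^2$ are conditionally independent given $\tbY_1$. Hence the covariance is $0$, exactly as in Proposition \ref{all_moments_symmetric}.

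For the variance, write $(\tbY_1\trans\tbY_2)^2-\frac{1}{n-1}=\tbY_2\trans\A\tbY_2$ with $\A=\tbY_1\tbY_1\trans-\frac{1}{n-1}\mathbf{P}$. The key point is that this $\A$ (not $\tbY_1\tbY_1\trans-\frac1n\bI_n$) satisfies the hypotheses of Proposition \ref{quadratic_form_general}. It is symmetric. Its trace is $\tr(\A)=1-\frac{n-1}{n-1}=0$. And $\A\one=0$, because $\tbY_1\trans\one=0$ and $\mathbf{P}\one=0$. Conditioning on $\tbY_1$ and applying Proposition \ref{quadratic_form_general} gives
\begin{align*}
\var\left((\tbY_1\trans\tbY_2)^2\right)=\E\left[c_1\tr(\A\circ\A)+2c_2\tr(\A^2)\right].
\end{align*}
Using $\mathbf{P}\tbY_1=\tbY_1$ and $\mathbf{P}^2=\mathbf{P}$,
\begin{align*}
\tr(\A^2)=1-\frac{2}{n-1}+\frac{1}{n-1}=\frac{n-2}{n-1}.
\end{align*}
The diagonal of $\A$ is $\tY_{1i}^2-\frac{1}{n}$, since $\mathbf{P}_{ii}=\frac{n-1}{n}$. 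Therefore
\begin{align*}
\E\tr(\A\circ\A)=\sum_{i=1}^n\E\left(\tY_{1i}^2-\frac1n\right)^2=n\E\tY_1^4-\frac1n.
\end{align*}
This yields the exact formula in the statement.

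For the asymptotics, substitute $c_1$ and $c_2$. We have
\begin{align*}
c_1\left(n\E\tY_1^4-\tfrac1n\right)=\left(1+O(\tfrac1n)\right)\left(n(\E\tY_1^4)^2-\tfrac{4}{n}\E\tY_1^4\right)+\frac{3}{n^3}\left(1+O(\tfrac1n)\right).
\end{align*}
Expanding the other term,
\begin{align*}
2c_2\frac{n-2}{n-1}=\frac{2}{n^2}-\frac{2}{n}\E\tY_1^4+\frac{c}{n^3}+O\left(\frac{\E\tY_1^4}{n^2}\right).
\end{align*}
The sum is $n(\E\tY_1^4)^2-\frac{6}{n}\E\tY_1^4+\frac{2}{n^2}$ up to relative errors of order $\frac1n$ (plus $O(n^{-3})$), matching \eqref{form:V_n_expression}.

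The main obstacle is the cross term $\frac{6}{n}\E\tY_1^4$, which must be shown negligible against $n(\E\tY_1^4)^2+\frac{2}{n^2}$. By the AM-GM inequality,
\begin{align*}
\frac{6}{n}\E\tY_1^4\le 2\sqrt{2}\,\frac{\sqrt{n}\,\E\tY_1^4}{n^{3/2}}\cdot\frac{6}{2\sqrt{2}}\cdot n^{-1/2}\cdot n^{1/2},
\end{align*}
so the ratio is at most of order $\frac{1}{\sqrt{n}}\cdot\frac{\E\tY_1^4}{\sqrt{n}(\E\tY_1^4)^2+\sqrt{2}\,n^{-3/2}}\cdot\frac{1}{\sqrt n}$, which is uniformly $O(n^{-1/2})$ since $2ab\le a^2+b^2$ bounds $\frac1n\E\tY_1^4\le\frac{1}{\sqrt{n}}\bigl(n(\E\tY_1^4)^2+n^{-2}\bigr)$. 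Thus the cross term is negligible uniformly in $\alpha$. The remaining $O(n^{-3})$ and $O(\E\tY_1^4/n^2)$ errors are dominated in the same way, using the trivial bound $1\le n^2\E\tY_1^4\le n$. The exact identity itself is routine algebra.
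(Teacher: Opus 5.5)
Your proof is correct and follows the paper's route. It uses the same identity $\E\tbY\tbY\trans=\frac{1}{n-1}\bigl(\bI_n-\frac1n\one_n\one_n\trans\bigr)$, the same centred matrix $\A$ fed into Proposition~\ref{quadratic_form_general} (you condition on $\tbY_1$ rather than $\tbY_2$, which makes no difference), and arrives at the same exact formula.

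The one real addition is the bound $\frac1n\E\tY_1^4\le\frac{1}{2\sqrt n}\bigl(n(\E\tY_1^4)^2+n^{-2}\bigr)$, which justifies dropping the $-\frac6n\E\tY_1^4$ term uniformly in $\alpha$, whereas the paper just asserts the $(1+o(1))$. The garbled display just before it should be replaced by this one-line inequality.
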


\begin{proof}
Noting 
\begin{align*}
	\E \tbY\tbY\trans=(\E\tY_1^2-\E \tY_1\tY_2)\bI_n+\E \tY_1\tY_2 \one_n  \one_n \trans=\frac{1}{n-1}\bI_n-\frac{1}{n(n-1)} \one_n  \one_n \trans,
\end{align*}
then we can get 
\begin{align*}
	\E(\tbY_1\trans \tbY_2)^2=&\E \tr(\tbY_1\tbY_1\trans\tbY_2\tbY_2\trans)=\tr\left( \frac{1}{n-1}\bI_n-\frac{1}{n(n-1)} \one_n  \one_n \trans \right)^2=\frac{1}{n-1}. 
\end{align*}   
and  
\begin{align*}
	\cov\left((\tbY_1\trans \tbY_2)^2,(\tbY_1\trans \tbY_3)^2\right)=\var\left( \tbY_1\trans\left(\frac{1}{n-1}\bI_n-\frac{1}{n(n-1)} \one_n  \one_n \trans \right)\tbY_1 \right)=0.
\end{align*}
For the covariance, noting
\begin{align*}
	&\left(\tbY_1 \trans\tbY_2 \right)^2=\tbY_1\trans \left( \tbY_2\tbY_2\trans\right) \tbY_1  \\
	=&\tbY_1\trans \left( \tbY_2\tbY_2\trans-\frac{1}{n-1}\bI_n+\frac{1}{n(n-1)} \one_n  \one_n \trans \right) \tbY_1  +\frac{1}{n-1},
\end{align*}
we can get
\begin{align*}
	\var\left( \left(\tbY_1 \trans\tbY_2 \right)^2 \right)=\E \left(\tbY_1\trans \left( \tbY_2\tbY_2\trans-\frac{1}{n-1}\bI_n+\frac{1}{n(n-1)} \one_n  \one_n \trans\right) \tbY_1  \right)^2.
\end{align*}
Writing 
\begin{align*}
	\A=\tbY_2\tbY_2\trans-\frac{1}{n-1}\bI_n+\frac{1}{n(n-1)} \one_n  \one_n\trans,
\end{align*}
we have $\A\trans=\A, \tr(\A)=0, \A \one=\bf{0}$ and 
\begin{align*}
	\tr(\A\circ\A)=\sum_{k=1}^n\left(\tY_{2k}^2-\frac{1}{n}\right)^2=\sum_{k=1}^n \tY_{2k}^4-\frac{1}{n},\quad \tr(\A^2)=\frac{n-2}{n-1}.
\end{align*}
By Proposition \ref{quadratic_form_general}, we can conclude
\begin{align*}
	&\var\left( \left(\tbY_1 \trans\tbY_2 \right)^2 \right)= \E \left(\tbY_1\trans \A \tbY_1\right)^2=c_1 \E \tr(\A\circ\A)+2 c_2 \E \tr(\A^2)\\
	=& c_1 \left(n\E \tY_1^4-\frac{1}{n}\right)+2 c_2 \frac{n-2}{n-1}\\
	=&\frac{n^3(n+1)}{(n-1)(n-2)(n-3)}\left(\E \tY_1^4\right)^2-\frac{6n}{(n-2)(n-3)}\E \tY_1^4\\
	&+\frac{2n^3-7n^2+12n-9}{n(n-1)^2(n-2)(n-3)}\\
	=&\left(n \left(\E \tY_1^4\right)^2+\frac{2}{n^2}\right)(1+o(1)).
\end{align*}
\end{proof}

\begin{proposition} \label{all_moments_Hadamard}
Letting $\tbY_1, \tbY_2, \tbY_3$ be independent copies of $\tbY$, we have
\begin{gather*}
	\E \left(\left\|\tbY\circ \tbY\right\|^2_2\right)=n \E \tY_1^4,\\
	\var \left(\left\|\tbY\circ \tbY\right\|^2_2\right)=n\E \tY_1^8+n(n-1)\E \tY_1^4\tY_2^4-n^2 \left(\E \tY_1^4\right)^2,
\end{gather*}
and
\begin{gather*}
	\E \left(\left\|\tbY_1 \circ \tbY_2\right\|^2_2\right)=\frac{1}{n},\quad  \cov\left(\left\|\tbY_1 \circ \tbY_2\right\|^2_2,\left\|\tbY_1 \circ \tbY_3\right\|^2_2\right)=0,\\
	\var\left( \left\|\tbY_1 \circ \tbY_2\right\|^2_2 \right)=\frac{\left(n^2 \E \tY_1^4-1 \right)^2}{n^2(n-1)}.
\end{gather*}

\end{proposition}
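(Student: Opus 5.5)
We prove the five identities in order, using only exchangeability of $\tY_1,\dots,\tY_n$, the constraints $\sum_k \tY_k=0$ and $\sum_k\tY_k^2=1$, and independence of $\tbY_1,\tbY_2,\tbY_3$. No tail assumption is needed.

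\textbf{Single vector.} We have $\|\tbY\circ\tbY\|_2^2=\sum_{k=1}^n \tY_k^4$. Exchangeability gives $\E\|\tbY\circ\tbY\|_2^2=n\E\tY_1^4$ at once. For the variance, expand $\E(\sum_k\tY_k^4)^2$ into the diagonal terms $k=l$, which contribute $n\E\tY_1^8$, and the off-diagonal terms, which contribute $n(n-1)\E\tY_1^4\tY_2^4$. Subtracting $n^2(\E\tY_1^4)^2$ gives the stated variance.

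\textbf{First and second moments for two vectors.} Write $\|\tbY_1\circ\tbY_2\|_2^2=\sum_k \tY_{1k}^2\tY_{2k}^2$. By independence and exchangeability its mean is $n(\E\tY_1^2)^2=n\cdot n^{-2}=1/n$.

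For the covariance, condition on $\tbY_1$. Then
\begin{align*}
\E\left[\|\tbY_1\circ\tbY_2\|_2^2\,\middle|\,\tbY_1\right]=\sum_k\tY_{1k}^2\,\E\tY_{2k}^2=\frac1n\sum_k\tY_{1k}^2=\frac1n,
\end{align*}
which is deterministic. By the law of total covariance, and since $\tbY_2,\tbY_3$ are conditionally independent given $\tbY_1$, the covariance equals the covariance of these two constant conditional means, which is $0$.

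For the variance, compute
\begin{align*}
\E\|\tbY_1\circ\tbY_2\|_2^4=\sum_{k,l}\left(\E\tY_{k}^2\tY_{l}^2\right)^2=n(\E\tY_1^4)^2+n(n-1)(\E\tY_1^2\tY_2^2)^2 .
\end{align*}
Then substitute the identity $\E\tY_1^2\tY_2^2=\frac{1}{n(n-1)}-\frac{1}{n-1}\E\tY_1^4=\frac{1-n\E\tY_1^4}{n(n-1)}$ recorded before Proposition \ref{quadratic_form_general}.

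\textbf{The algebraic step.} This is the only step needing care. Put $m=\E\tY_1^4$. We need
\begin{align*}
nm^2+\frac{(1-nm)^2}{n(n-1)}-\frac1{n^2}=\frac{(n^2m-1)^2}{n^2(n-1)}.
\end{align*}
Multiply through by $n^2(n-1)$. The left side becomes $n^3(n-1)m^2+n(1-nm)^2-(n-1)$. Expanding, the $m^2$ terms give $n^4m^2$, the linear terms give $-2n^2m$, and the constants give $n-(n-1)=1$. This matches $(n^2m-1)^2$, which completes the proof.
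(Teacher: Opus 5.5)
Your proposal is correct and follows essentially the same route as the paper: the same expansion of $\sum_k \tY_k^4$ and of $\sum_k \tY_{1k}^2\tY_{2k}^2$ via exchangeability, followed by substituting $\E\tY_1^2\tY_2^2=\frac{1-n\E\tY_1^4}{n(n-1)}$ and simplifying the algebra. The only cosmetic difference is that the paper obtains the mean $1/n$ and the zero covariance by writing $\|\tbY_1\circ\tbY_2\|_2^2-\frac1n$ as the trace of the Hadamard product of the two centered matrices $\tbY_i\tbY_i\trans-\frac{1}{n-1}\bigl(\bI_n-\frac1n\one_n\one_n\trans\bigr)$, whereas you condition on $\tbY_1$ directly; this is the same argument.
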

\begin{proof}
By the definition of Hadamard product,
\begin{align*}
	\left\|\tbY\circ \tbY\right\|^2_2=\sum_{i=1}^n \tY_i^4,   
\end{align*}
which yields 
\begin{align*}
	\E \left(\left\|\tbY\circ \tbY\right\|^2_2\right)=n \E \tY_1^4,  
\end{align*}
and
\begin{align*}
	\var \left(\left\|\tbY\circ \tbY\right\|^2_2\right)=&\sum_{i,j=1}^n \E \tY_i^4 \tY_j^4- n^2 \left(\E \tY_1^4\right)^2,\\
	=&n\E \tY_1^8+n(n-1)\E \tY_1^4\tY_2^4-n^2 \left(\E \tY_1^4\right)^2
\end{align*}
For the Hadamard product,
\begin{align*}
	&\left\|\tbY_1 \circ \tbY_2\right\|^2_2-\frac{1}{n}\\
	&=\tr\left(\left(\tbY_1\tbY_1\trans-\frac{1}{n-1}\left(\bI_n-\frac{1}{n} \one_n  \one_n\trans\right) \right) \circ \left(\tbY_2\tbY_2\trans-\frac{1}{n-1}\left(\bI_n-\frac{1}{n} \one_n  \one_n\trans\right) \right)    \right), 
\end{align*}
which yields
\begin{align*}
	\E \left(\left\|\tbY_1 \circ \tbY_2\right\|^2_2\right)=\frac{1}{n},\quad  \cov\left(\left\|\tbY_1 \circ \tbY_2\right\|^2_2,\left\|\tbY_1 \circ \tbY_3\right\|^2_2\right)=0.
\end{align*}
For the covariance, 
\begin{align*}
	\var\left(\left\|\tbY_1 \circ \tbY_2\right\|^2_2\right)=&\E \left(\sum_{i=1}^n \tY_{1i}^2\tY_{2i}^2\right)^2-\frac{1}{n^2}=\sum_{i,j=1}^n \left(\E \tY_{1i}^2 \tY_{1j}^2\right)^2-\frac{1}{n^2},\\
	=&n \left(\E \tY_1^4\right)^2+n(n-1)\left(\E \tY_1^2\tY_2^2\right)^2-\frac{1}{n^2}\\
	=&\frac{n^2}{n-1}\left(\E \tY_1^4\right)^2-\frac{2}{n-1}\E \tY_1^4+\frac{1}{n^2(n-1)}\\
	=&\frac{\left(n^2 \E \tY_1^4-1 \right)^2}{n^2(n-1)}.
\end{align*}
\end{proof}


\begin{proof}[Proof of Theorem \ref{thm:CLT3}]  
	Let $\tV_n=\var(\tT)$. Take the martingale decomposition of $\tT$ as follows:
	\begin{align*}
		&\tM_{n,k}=\frac{1}{\sqrt{\tV_n}}\left(\E[\tT|\tbY_1,\cdots,\tbY_k]-\E[\tT|\tbY_1,\cdots,\tbY_{k-1}]\right)\\
		=&\frac{2}{\sqrt{\tV_n}}\tr \left(  \sum_{i<k} \tbY_i\tbY_i\trans+\frac{p-k}{n-1} \bI_n-\frac{p-k}{n(n-1)}\one_n  \one_n \trans\right)\\
		&\left(  \tbY_k \tbY_k\trans-\frac{1}{n-1}\bI_n+\frac{1}{n(n-1)}\one_n  \one_n\trans\right)\\
		=&\frac{2}{\sqrt{\tV_n}}\tr \left(  \sum_{i<k} \tbY_i\tbY_i\trans \right)\left(  \tbY_k \tbY_k\trans-\frac{1}{n-1}\bI_n+\frac{1}{n(n-1)}\one_n  \one_n \trans \right)\\
		=&\frac{2}{\sqrt{\tV_n}} \sum_{i<k} \tbY_k \trans  \left(\tbY_i \tbY_i \trans-\frac{1}{n-1}\bI_n+\frac{1}{n(n-1)}\one_n  \one_n \trans\right)\tbY_k,
	\end{align*}
	where we use the fact
	\begin{align*}
		\tT=&1+\tr \left(  \sum_{i \neq k} \tbY_i\tbY_i\trans \right)\left(  \sum_{j \neq k} \tbY_j\tbY_j\trans \right)\\
		&+2 \tr \left(  \sum_{i<k} \tbY_i\tbY_i\trans+\sum_{i>k}\tbY_i\tbY_i\trans \right)\left(  \tbY_k \tbY_k\trans\right).
	\end{align*}
	The remaining details of the proof are very similar to those in the symmetric distribution case discussed earlier, and we only outline the key steps here.
	\bigskip

	Write
	\begin{align*}
		\A_i=\tbY_i \tbY_i \trans-\frac{1}{n-1}\bI_n+\frac{1}{n(n-1)}\one_n  \one_n \trans.
	\end{align*}
	One can verify that
	\begin{align*}
		\A_i\trans=\A_i, \quad \tr(\A_i)=0, \quad \A_i \one =\bf{0}.
	\end{align*}
	By Proposition \ref{quadratic_form_general}
	\begin{align*}
		&\E[\tM_{n,k}^2|\tbY_1,\cdots,\tbY_{k-1}]\\
		=&\frac{4c_1}{\tV_n}\tr\left(\left(\sum_{i=1}^{k-1}\A_i \right)\circ \left(\sum_{i=1}^{k-1}\A_i \right) \right)+\frac{8c_2}{\tV_n}\tr\left(\left(\sum_{i=1}^{k-1}\A_i \right)^2\right)\\
		=&\frac{4c_{1}}{\tV_n}\sum_{i,j=1}^{k-1} \left(\left\|\tbY_i \circ \tbY_j\right\|^2_2-\frac{1}{n}\right)\\
		&+\frac{16 c_{2}}{\tV_n}\sum_{i<j<k} \left(\left( \tbY_i \trans \tbY_j\right)^2-\frac{1}{n-1}\right)+\frac{8(k-1)c_{2}}{\tV_n}\frac{n-2}{n-1}.
	\end{align*}
	By calculating the variances of 
	\begin{gather*}
		\left\|\tbY_i \circ \tbY_i\right\|^2_2,\quad \left\|\tbY_1 \circ \tbY_1\right\|^2_2,\quad \left( \tbY_i \trans \tbY_j\right)^2,
	\end{gather*}
	we can conclude
	\begin{align} \label{condition3}
		\sum_{k=1}^p\E[M_{n,k}^2|\tbY_1,\cdots,\tbY_{k-1}] \leq  \frac{C}{p} \frac{\left(\E \tY_1^4\right)^2 \left(n\E \tY_1^8+n(n-1)\E \tY_1^4\tY_2^4\right) }{\left(n \left(\E \tY_1^4\right)^2+\frac{2}{n^2}\right)^2}+o(1).
	\end{align}
	
	Noting
	\begin{align*}
		\tM_{n,k}=& \frac{2}{\sqrt{\tV_n}} \sum_{i=1}^{k-1} \tbY_i \trans  \left(  \tbY_k \tbY_k\trans-\frac{1}{n-1}\bI_n+\frac{1}{n(n-1)}\one_n  \one_n \trans \right) \tbY_i\\
		=&\frac{2}{\sqrt{\tV_n}} \sum_{i=1}^{k-1} \tbY_i \trans  \A_k \tbY_i
	\end{align*}
	we have
	\begin{align*}
		\tM_{n,k}^4=&\frac{16}{\tV_n^2} \sum_{i_1,i_2,i_3,i_4=1}^{k-1} \left(\tbY_{i_1} \trans \A_k \tbY_{i_1}\right) \left(\tbY_{i_2} \trans \A_k \tbY_{i_2}\right)  \left(\tbY_{i_3} \trans \A_k \tbY_{i_3}\right) \left( \tbY_{i_4} \trans \A_k \tbY_{i_4}\right).
	\end{align*}
	Since
	\begin{align*}
		\E \left( \tbY_i \trans  \A_k \tbY_i \mid \tbY_k \right)=\tr\left( \frac{1}{n-1}\bI_n-\frac{1}{n(n-1)}\one_n  \one_n \trans \right) \A_k=0,~i=1,\cdots,k-1,
	\end{align*}
	we can get 
	\begin{align*}
		&\E \tM_{n,k}^4= \frac{16}{\tV_n^2} \sum_{i=1}^{k-1} \E \left(\tbY_{i} \trans \A_k \tbY_{i}\right)^4+ \frac{48}{\tV_n^2} \sum_{i \neq j}^{k-1} \E \left(\tbY_{i} \trans \A_k \tbY_{i}\right)^2\left(\tbY_{j} \trans \A_k \tbY_{j}\right)^2\\
		=& \frac{16(k-1)}{\tV_n^2} \E \left(\tbY_{1} \trans \A_2 \tbY_{1}\right)^4+ \frac{48(k-1)(k-2)}{\tV_n^2} \E \left(\tbY_{1} \trans \A_3 \tbY_{1}\right)^2\left(\tbY_{2} \trans \A_3 \tbY_{2}\right)^2.
	\end{align*}
	For the first term, we have
	\begin{align*}  
		\E \left(\tbY_{1} \trans \A_2 \tbY_{1}\right)^4=\E \left( \left(\tbY_1\trans \tbY_2 \right)^2-\frac{1}{n-1}  \right)^4\leq 8 \E \left(\left(\tbY_1\trans \tbY_2 \right)^8+\frac{1}{n^4}\right).
	\end{align*}
	Then
	\begin{align*}
		&\E\left(\tbY_1\trans\tbY_2\right)^8=\E \sum_{i_1,\cdots,i_8=1}^n \tY_{1i_1}\cdots \tY_{1i_8}\tY_{2i_1}\cdots \tY_{2i_8}\\
		= &\left(\sum_{|\{i_1\}\cup\cdots \cup \{i_8\}|=1} + \sum_{|\{i_1\}\cup\cdots \cup \{i_8\}|=2}
		+\sum_{|\{i_1\}\cup\cdots \cup \{i_8\}|=3}+\sum_{|\{i_1\}\cup\cdots \cup \{i_8\}|\geq 4}\right)\\
		& \left(\E \tY_{i_1}\cdots \tY_{i_8}\right)^2\\
		=&n\left(\E \tY_{1}^8\right)^2+n(n-1) \sum_{k_1+k_2=8,k_1,k_2\geq 1} \frac{8!}{k_1!k_2!} \left(\E \tY_{1}^{k_1}\tY_{2}^{k_2}\right)^2\\
		&+n(n-1)(n-2) \sum_{k_1+k_2+k_3=8,k_1,k_2,k_3\geq 1} \frac{8!}{k_1!k_2!k_3!}\left(\E \tY_{1}^{k_1}\tY_{2}^{k_2}\tY_{3}^{k_3}\right)^2+O(n^{-4}),
	\end{align*} 
	where the last step is due to the fact that $\E \tY_1^{k_1}\cdots \tY_r^{k_r}=O(n^{-r})$ by Proposition \ref{prop:general_power_for_tY}. For the moments with odd powers, we can bound them by the moments with even powers. For example,
	\begin{align*}
		& \E \tY_1^7 \tY_2=-\frac{1}{n-1} \E \tY_1^8, \\
		& \E \tY_1^5\tY_2^3\leq \frac{1}{2} \E \tY_1^4\tY_2^2 \left(\tY_1^2+\tY_2^2\right)\leq  \frac{1}{2}\E \tY_1^4\tY_2^4 +\frac{1}{2}\E \tY_1^6\tY_2^2.
	\end{align*}
	For $k_1+k_2+k_3=8$, we have
	\begin{align*}
		0= &\E \tY_1^k \tY^{7-k}_2 \left(\tY_1+\cdots+\tY_n\right)\\
		=&\E \tY_1^{k+1}\tY_2^{7-k}+\E \tY_1^k \tY^{8-k}_2+(n-1)\E \tY_1^{k} \tY^{7-k}_2\tY_3,
	\end{align*}
	which implies
	\begin{gather*}
		\E \tY_1^{k} \tY^{7-k}_2\tY_3=-\frac{1}{n-1} \E \tY_1^{k+1}\tY_2^{7-k}-\frac{1}{n-1} \E \tY_1^{k}\tY_2^{8-k},~k=1,\cdots,6. 
	\end{gather*}
	Moreover,
	\begin{align*}
		\E \tY_1^3\tY_2^3\tY_3^2 \leq \frac{1}{2} \E \tY_1^2\tY_2^2 \tY_3^2 \left(\tY_1^2+\tY_2^2\right)\leq  \E \tY_1^4\tY_2^2\tY_3^2.
	\end{align*}   
	Combining all the pieces together, we can get
	\begin{align*}
		& \E\left(\tbY_1\trans\tbY_2\right)^8 \\
		\leq & C \left( n\left(\E \tY_{1}^8\right)^2+n^2 \left(\E \tY_{1}^6\tY_2^2\right)^2+n^2 \left(\E \tY_{1}^4 \tY_2^4\right)^2+n^3 \left(\E \tY_{1}^2 \tY_2^2 \tY_3^4 \right)^2+n^{-4}\right).
	\end{align*}
	For the second term, by Proposition \ref{quadratic_form_general}, we have
	\begin{align*}
		&  \E \left(\tbY_{1} \trans \A_3 \tbY_{1}\right)^2\left(\tbY_{2} \trans \A_3 \tbY_{2}\right)^2
		=\E \left( c_1 \tr(\A_3 \circ\A_3)+2 c_2 \tr(\A_3^2)    \right)^2\\
		=& \E \left(c_1 \left\|\tbY_1 \circ \tbY_1\right\|^2_2-\frac{c_1}{n}+2c_2\frac{n-2}{n-1}\right)^2=c_1^2 \var\left(\left\|\tbY_1 \circ \tbY_1\right\|^2_2  \right)+4c_2^2\frac{(n-2)^2}{(n-1)^2}.
	\end{align*}
	Therefore, the conditions of martingale central limit theorem hold if 
	\begin{gather*}
		\frac{n\left(\E \tY_{1}^8\right)^2+n^2 \left(\E \tY_{1}^6\tY_2^2\right)^2+n^2 \left(\E \tY_{1}^4 \tY_2^4\right)^2+n^3 \left(\E \tY_{1}^2 \tY_{2}^2 \tY_{3}^4 \right)^2}{p^2\left(n \left(\E \tY^4\right)^2+\frac{2}{n^2}\right)^2} \to 0;  \\
		\frac{\left(\E \tY^4\right)^2 \left( n\E \tY_1^8+n(n-1)\E \tY_1^4\tY_2^4\right)}{p\left(n \left(\E \tY^4\right)^2+\frac{2}{n^2}\right)^2} \to 0.
	\end{gather*}
	Therefore, it suffices to have
	\begin{align*} 
		\frac{n \E \tY_{1}^8+\E \tY_{1}^4}{\sqrt{n} p \left(n \left(\E \tY^4\right)^2+\frac{2}{n^2}\right)} \to 0, 
	\end{align*}  
	which holds under the following relationship between p and n
	\begin{align*}
		\begin{cases}
			\frac{n}{p^2}\to 0,&0<\alpha<2,\\
			\frac{n^{\alpha-1}}{p^2} \frac{\tilde{l}^2(\sqrt{a_n})}{l^2(\sqrt{a_n})}  \to 0, &\alpha=2,\E X^{2}=\infty,\\
			\frac{n^{\alpha-1}}{p^2} \frac{1}{l^2(\sqrt{n})}  \to 0,&2\leq\alpha<3, \E X^{2}<\infty\\
			\frac{n}{p}\frac{l(\sqrt{n})}{l^2(\sqrt{n})+2}\to 0,&\alpha=3,\\
			\frac{n^{5-\alpha}l^2(\sqrt{n})}{p^2}\to 0,&3<\alpha\leq 5,\\
			n, p \to \infty,&\alpha>5.
		\end{cases}
	\end{align*}
\end{proof}

\begin{proof}[Proof of Theorem \ref{thm:CLT_distribution_free}]
	Let
	\begin{align*}
		H=\frac{1}{np}\sum_{i,j}\tY_{ij}^4.
	\end{align*}
	We need to verify the convergence of our statistic
	\begin{align*}
		\frac{\tr(\widetilde{\R}_n^2)-\E\tr(\widetilde{\R}_n^2)}{\sqrt{\widehat{V}}}=\frac{\tr(\widetilde{\R}_n^2)-\E\tr(\widetilde{\R}_n^2)}{\sqrt{\frac{2p^2}{n^2}\left[n^3\left(\E \tY_{11}^4\right)^2+2\right]}}\cdot\sqrt{\frac{n^3\left(\E \tY_{11}^4\right)^2+2}{n^3H^2+2}}.
	\end{align*}
	To this end, it suffices to show that
	\begin{align*}
		\frac{n^3\left(\E \tY_{11}^4\right)^2+2}{n^3H^2+2}\stackrel{P}{\to}1.
	\end{align*}
	If $\alpha>3$, it suffices to prove that
	\begin{align*}
		n^{\frac{3}{2}}H\stackrel{P}{\to}0.
	\end{align*}
	If $\alpha\leq3$, it suffices to prove that 
	\begin{align*}
		\frac{\E \tY_1^4}{H}\stackrel{P}{\to}1.
	\end{align*}
	
	Consider the case when $\alpha\leq3$ first. By Markov's inequality  
	\begin{align*}
		P\left(\left|\frac{H}{\E \tY_{11}^4}-1\right|>\varepsilon\right)=P\left(\left|H-\E \tY_{11}^4\right|>\varepsilon\E \tY_{11}^4\right)\leq\frac{\E\left|H-\E \tY_{11}^4\right|^2}{\varepsilon^2\left(\E \tY_{11}^4\right)^2}.
	\end{align*}
	A straightforward calculation indicates that
	\begin{align*}
		&\E|H-\E \tY_{11}^4|^2=\frac{1}{n^2p^2}\sum_{i,k=1}^p\sum_{j,l=1}^n\E(\tY_{ij}^4-\E \tY_{ij}^4)(\tY_{kl}^4-\E \tY_{kl}^4)\\
		=&\frac{1}{n^2p^2}\sum_{i=1}^p\sum_{j,l=1}^n\E(\tY_{ij}^4-\E \tY_{ij}^4)(\tY_{il}^4-\E \tY_{il}^4)\\
		&+\frac{1}{n^2p^2}\sum_{i\not=k}^p\sum_{j,l=1}^n\E(\tY_{ij}^4-\E \tY_{ij}^4)\E(\tY_{kl}^4-\E \tY_{kl}^4)\\
		=&\frac{1}{n^2p}\sum_{j,l=1}^n\E(\tY_{1j}^4-\E \tY_{1j}^4)(\tY_{1l}^4-\E \tY_{1l}^4)\\
		=&\frac{1}{n^2p}\sum_{j=1}^n\E(\tY_{1j}^4-\E \tY_{1j}^4)^2+\frac{1}{n^2p}\sum_{j\not=l}^n\E(\tY_{1j}^4-\E \tY_{1j}^4)(\tY_{1l}^4-\E \tY_{1l}^4)\\
		=&\frac{1}{np}\E(\tY_{11}^4-\E \tY_{11}^4)^2+\frac{1}{p}\E(\tY_{11}^4-\E \tY_{11}^4)(\tY_{12}^4-\E \tY_{11}^4).
	\end{align*}
	From Proposition \ref{prop:even_power_for_tY} we conclude that
	\begin{align*}
		\frac{\E|H-\E \tY_{11}^4|^2}{(\E \tY_{11}^4)^2}=\frac{\E \tY_{11}^8}{(\E \tY_{11}^4)^2}O(\frac{1}{np})+\frac{\E\tY_{11}^4\tY_{12}^4}{(\E\tY_{11}^4)^2}O(\frac{1}{p})=O(\frac{\sqrt{n}}{p})\to 0.
	\end{align*}

	Consider the case when $\alpha>3$ next. We only need to show that 
	\begin{align*}
		\E n^{\frac{3}{2}}H\to 0.
	\end{align*}
	Proposition \ref{prop:even_power_for_tY} yields
	\begin{align*}
		n^{\frac{3}{2}}\E H=n^{\frac{3}{2}}\E \tY_{11}^4\to 0.
	\end{align*}
	This completes the proof.
\end{proof}

\end{document}